\theoremstyle{definition}
\newtheorem{theo}{Theorem}[section]
\newtheorem{lemm}[theo]{Lemma}
\newtheorem{defi}[theo]{Definition}
\newtheorem{prop}[theo]{Proposition}
\newtheorem{coro}[theo]{Corollary}
\newtheorem{rema}[theo]{Remark}
\newtheorem{exam}[theo]{Example}
\newcommand{\ilim}[1][]{\mathop{\varinjlim}\limits_{#1}}
\newcommand{\psum}[1][]{\mathop{\textstyle\sum'}_{#1}}
\newcommand{\Cinf}[2]{C^{\infty,({#1,#2})}_X}
\newcommand{\shE}{\mathscr{E}^{\mathbb{R}}_{X}}
\newcommand{\shEx}{\mathscr{E}^{\mathbb{R}}_{X,z^*}}
\newcommand{\bp}{\bar{\partial}}
\newcommand{\Q}{\mathfrak{S}/\mathfrak{N}}
\newcommand{\Qinf}{\mathfrak{S}^\infty/\mathfrak{N}^\infty}
\newcommand{\cSubset}{\underset{\rm cone}{\Subset}}
\newcommand{\wsigma}{\widetilde{\varsigma}}
\newcommand{\dbo}[1]{\overline{\overline{#1}}}
\title{The equivalence of pseudodifferential operators and their symbols via \v{C}ech-Dolbeault cohomology}
\date{}
\author{DAICHI KOMORI}
\begin{document}

\maketitle

\begin{abstract}
In this paper we construct the sheaf morphism from the sheaf of pseudodifferential operators to its symbol class.
Since the map is hard to construct directly, we realize it with two original ideas as follows.
First, to calculate cohomologies we use the theory of \v{C}ech-Dolbeault cohomology introduced by Honda, Izawa and Suwa in \cite{HIS}.
Secondly we construct a new symbol class, which is called the symbols of $C^\infty$-type.
These ideas enable us to construct the sheaf morphism, which is actually an isomorphism of sheaves.
\end{abstract}

\section*{Introduction}

The theory of hyperfunctions was introduced in \cite{SKK} and it gives us to conduct research of the systems of differential equations from a completely new perspective.
The essential idea of pseudodifferential operators was also given in \cite{SKK}, and later, Kashiwara and Kawai gave the explicit definition in \cite{KK}.\par

The class of pseudodifferential operators is sufficiently large class of differential operators and it contains truly important differential operators such as 
the differential operators of fractional order and those of infinite order.\par

Since the sheaf $\shE$ of pseudodifferential operators is explicitly defined by using the sheaf cohomology, for the study of $\shE$ in analytic category Kataoka in \cite{K} introduced symbols of pseudodifferential operators by the aid of the Radon transformations.
Moreover Aoki in \cite{A1},\cite{A2} established the symbol theory of $\shE$ and developed the study of the systems of differential equations of infinite order.
However, two fundamental problems are unresolved in their symbol theory:

\begin{enumerate}
\item The equivalence of the sheaf $\shE$ of pseudodifferential operators and its symbol class $\Q$ as sheaves.
\item The commutativity of the composition of pseudodifferential operators and the product of symbols through a symbol map.
\end{enumerate}

In his theory Aoki calculated the cohomological expression of a stalk of $\shE$ by using \v{C}ech cohomology.
In general we have to construct the \v{C}ech coverings which consist of Stein open sets.
For a global case, however, such coverings are hard to be found when we manipulate the cohomological expression of $\shE$.\par

In recent years the first problem has been solved by Aoki, Honda and Yamazaki in \cite{AHY}.
They introduce a new space with one apparent parameter and construct the sheaf morphism on it.
However, their construction is complicated and the more concise solution is desired as the foundation of symbol theory.\par

The aim of this paper is to construct the sheaf morphism from the sheaf $\shE$ of pseudodifferential operators to its symbol class by making use of \v{C}ech-Dolbeault cohomology.
Honda, Izawa and Suwa in \cite{HIS} find that the local cohomology groups with coefficients in the sheaf $\mathscr{O}$ of holomorphic functions is isomorphic to the cohomology group which is induced from double complex consisting of \v{C}ech coverings and the Dolbeault complex.
As the theory of \v{C}ech-Dolbeault cohomology is based on $C^\infty$ forms we can use convenient techniques such as a partition of unity, controlling the support by cutoff functions, and so on.
\par

As mentioned above, while we can apply useful techniques to $\shE$ via the \v{C}ech-Dolbeault cohomology , we have still some difficulties to construct the morphism from \v{C}ech-Dolbeault cohomology of $\shE$ to the symbol class $\Q$ since the symbol class $\Q$ is based on the theory of holomorphic functions.
To overcome this difficulty we introduce a new symbol class $\Qinf$, which consists of symbols of $C^\infty$ type.
Finally we can realize the morphism between $\shE$ and $\Q$ with concrete integration cycles.
\par

The plan of this paper is as follows.
The Section 1 provides some notations and definitions.
In Section 2 we introduce the \v{C}ech-Dolbeault cohomology of the sheaf $\shE$ of pseudodifferential operators.
Thanks to the study of Kashiwara and Schapira \cite{KS2} it is known that the section $\shE(V)$ on an open cone $V$ is represented by the inductive limit of local cohomology groups.
We apply the theory of \v{C}ech-Dolbeault cohomology to this cohomological expression.
In Section 3 we define new symbol class and prove that the new symbol class $\Qinf$ is isomorphic to the classical symbol class $\Q$ which is introduced by Aoki.
While the classical symbol theory is based on holomorphic functions, the \v{C}ech-Dolbeault cohomology is based on the $C^\infty$ functions.
Therefore it is hard to construct the map from \v{C}ech-Dolbeault cohomology to the classical symbol class.
We realize the map via new symbol class in the next section.
In Section 4 we construct the morphism $\varsigma$ from $\shE$ to $\Qinf$ by using the \v{C}ech-Dolbeault expression of $\shE$.
We also give the well-definedness of $\varsigma$.
In the appendix we prove the commutativity of the symbol map introduced by Aoki and the morphism constructed in Section 4.
For this purpose we introduced the \v{C}ech-Dolbeault cohomology with general coverings.
\par

The author would like to thank Professor Naofumi Honda in Hokkaido University for useful discussions and appropriate advices.

\section{Preliminaries}
Through this paper we shall follow the notations and definitions introduced below. \par
We denote by $\mathbb{Z},\mathbb{R}$ and $\mathbb{C}$ the sets of integers, of real numbers and of complex numbers, respectively.\par

Let $M$ be a real manifold of dimension $n$ and $X$ a complexification of $M$.
Set the diagonal set
\[
\Delta_X=\{(z,z')\in X\times X \mid z=z' \}.
\]
We write $\Delta$ instead of $\Delta_X$ if there is no risk of confusion.
One denotes by $p_1$ and $p_2$ the first and the second projections from $X\times X$ to $X$, respectively.

One denotes by $\tau:TX\rightarrow X$ the canonical projection from the tangent bundle to $X$ and $\pi:T^*X\rightarrow X$ that from the cotangent bundle to $X$.\par
Let $\omega$ be a $(p,q)$-form with coefficients in $C^\infty$-functions, and $\partial_z$ and $\bp_z$ the Dolbeault operators with respect to the variable $z$, that is, for a local coordinate $z=(z_1,z_2,\dots,z_n)$, the form $\omega$ can be written by
\[
\omega=\sum_{|I|=p,|J|=q}f_{IJ}(z)dz^I\wedge d\bar{z}^J.
\]
Moreover the Dolbeault operators are written by
\begin{align*}
\partial_z \omega = \sum^n_{i=1} \sum_{|I|=p,|J|=q} \frac{\partial}{\partial z_i}f_{IJ}(z)d z_i\wedge dz^I\wedge d\bar{z}^J, \\
\bp_z \omega = \sum^n_{i=1} \sum_{|I|=p,|J|=q} \frac{\partial}{\partial \bar{z}_i}f_{IJ}(z)d\bar{z}_i\wedge dz^I\wedge d\bar{z}^J.
\end{align*}

\begin{defi}
We define several sheaves:
\begin{enumerate}
\item Let $\mathscr{O}^{(p)}_X$ be the sheaf of holomorphic $p$-forms on $X$.
In particular $\mathscr{O}^{(0)}_X=\mathscr{O}_X$ is the sheaf of holomorphic functions on  $X$.
\item We denote by $or_X$ and $or_{M/X}=\mathscr{H}^{m}_M(\mathbb{Z}_X)$ the orientation sheaf on $X$ and the relative orientation sheaf on $M$, respectively.
\item Set $\Omega^{(n)}_X=\mathscr{O}^{(n)}_X\underset{\mathbb{C}_X}{\otimes} or_X$ and $\mathscr{O}^{(0,n)}_{X\times X} = \mathscr{O}_{X\times X}\underset{p^{-1}_2\mathscr{O}_X}{\otimes} p^{-1}_2\Omega^{(n)}_X$.
\item One denotes by $\Cinf{p}{q}$ the sheaf of $(p,q)$-forms with coefficients in $C^\infty$ on $X$.
\item One denotes by $\shE$ the sheaf of pseudodifferential operators on $T^*X$.
\end{enumerate}
\end{defi}
Let $(z;\zeta)$ be a local coordinate of $T^*X$.
Set $\ring{T}^*X=T^*X\setminus T^*_XX$ where $T^*_XX$ is the zero section.
We identify $T^*_\Delta(X\times X)$ with $T^*X$ by the map
\begin{equation}\label{ciso}
(z,z;\zeta,-\zeta)\mapsto (z;\zeta),
\end{equation}
which is induced from the first projection $p_1:X\times X \rightarrow X$.

\begin{defi}
Let $V$ be a set in $\ring{T}^*X$.
The set $V$ is called a cone, or equivalently called a conic set in $\ring{T}^*X$ if and only if
\[
(z;\zeta)\in V \Rightarrow (z;t\zeta)\in V \mbox{ for any }t\in\mathbb{R}_+.
\]
\end{defi}

\begin{rema}
Let $V$ be a set in $T^*X$.
We say that $V$ is convex (resp. conic, resp. proper) if for any $z \in \pi(V)$, the set $\pi^{-1}(z)\cap V$ is convex (resp. conic, resp. proper).
Recall that a cone is said to be proper if its closure contains no lines.
\end{rema}
Let $V$ and $V'$ be subsets in $T^*X$.
We write $V'\Subset V$ if $V'$ is a relatively compact set in $V$ for the usual topology.

\begin{defi}\label{rcc}
Let $V$ be an open cone in $\ring{T}^*X$.
A set $W\subset V$ is an infinitesimal wedge of type $V$ at infinity if for any $K\Subset V$ there exists $\delta>0$ such that
\[
K_\delta=\{(z;t\zeta) \mid (z;\zeta)\in K,\,t>\delta \} \subset W.
\]
In what follows $W$ is called the infinitesimal wedge of type $V$ for short.
\end{defi}

\begin{defi}\label{csub}
Let $V$ and $V'$ be cones in $\ring{T}^*X$ with $V'\subset V$.
The cone $V'$ is a relatively compact cone in $V$ if there exists a relatively compact set $K$ of $V$ such that
\[
V'=\{(z;t\zeta) \mid t\in\mathbb{R}_+,(z;\zeta)\in K\}.
\]
To clarify the differences, one denotes $V'\cSubset V$ if $V'$ is a relatively compact cone in $V$.
\end{defi}

\section{The sheaf $\shE$ of pseudodifferential operators and its \v{C}ech-Dolbeault expression}

First of all we briefly recall the sheaf $\shE$ of pseudodifferential operators.
Let $X$ be a complex manifold of dimension  $n$.
The sheaf $\shE$ of pseudodifferential operators on $T^*X$ is defined by
\begin{equation}
\mathscr{E}^\mathbb{R}_X=H^n(\mu_\Delta(\mathscr{O}^{(0,n)}_{X\times X})),
\end{equation}
where $\mu_\Delta(\mathscr{O}^{(0,n)}_{X\times X})$ is the microlocalization of $\mathscr{O}^{(0,n)}_{X\times X}$ along the diagonal set $\Delta$.
One denotes by $\shEx$ the stalk of $\shE$ at a point $z^*\in T^*X$.\par

Let $V$ be a subset of $T^*X$.
We denote by $V^\circ$ the polar set of $V$, that is, $V^\circ$ is defined by
\[
V^\circ=\{y\in TX \mid \tau(y) \in \pi(V)\ \mbox{and}\ {\rm Re}\,\langle x,y \rangle\geq 0\ \mbox{for all}\ x\in{\pi^{-1}}\circ\tau(y)\cap V\}.
\]

Then the following theorem is essential.

\begin{theo}[\cite{KS1}, Theorem 4.3.2]{\label{KS1}}
Let $V$ be an open convex cone in $T^*X$.
We have
\begin{equation}\label{cohomex}
\shE(V) = \ilim[U,G]H^n_{G\cap U}(U;\mathscr{O}^{(0,n)}_{X\times X}),
\end{equation}
where $U$ ranges through the family of open subsets of $X\times X$ such that $U\cap \Delta=\pi(V)$ and $G$ through the family of closed subsets of $X\times X$ such that $C_\Delta(G)\subset V^\circ$.
\end{theo}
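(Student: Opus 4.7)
The plan is to deduce the statement by unwinding the definition $\shE = H^n(\mu_\Delta(\mathscr{O}^{(0,n)}_{X\times X}))$ and invoking the general formula for sections of a microlocalization over an open convex cone, which is the central output of the Fourier-Sato theory developed in \cite{KS1}.

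First, I would recall that for any (complex of) sheaves $F$ on $X\times X$ and any open convex cone $V \subset T^*_\Delta(X\times X)$, one has the isomorphism
\begin{equation*}
R\Gamma(V; \mu_\Delta F) \;\simeq\; \ilim[U,G] R\Gamma_{G\cap U}(U; F),
\end{equation*}
where $U$ ranges through open subsets of $X\times X$ with $U\cap \Delta=\pi(V)$ and $G$ through closed subsets with $C_\Delta(G) \subset V^\circ$. The derivation of this identity proceeds by writing $\mu_\Delta F$ as the Fourier-Sato transform of the specialization $\nu_\Delta F$ and exploiting the polar duality between open convex cones in $T_\Delta(X\times X)$ and in $T^*_\Delta(X\times X)$, under the identification \eqref{ciso}.

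Next, I would apply this formula to $F = \mathscr{O}^{(0,n)}_{X\times X}$ and take the $n$-th cohomology of both sides. Since filtered inductive limits are exact, they commute with $H^n$, producing
\begin{equation*}
H^n R\Gamma(V; \mu_\Delta \mathscr{O}^{(0,n)}_{X\times X}) \;\cong\; \ilim[U,G] H^n_{G\cap U}(U; \mathscr{O}^{(0,n)}_{X\times X}).
\end{equation*}
To identify the left-hand side with $\shE(V)$, I would invoke the fact that $\mu_\Delta(\mathscr{O}^{(0,n)}_{X\times X})$ is concentrated in cohomological degree $n$. This reflects the codimension of $\Delta$ in $X\times X$ being $n$ together with the holomorphic Poincar\'e lemma in the normal directions, so that $H^j(\mu_\Delta \mathscr{O}^{(0,n)}_{X\times X}) = 0$ for $j\neq n$. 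Consequently the hypercohomology spectral sequence collapses at $E_2$, giving $H^n R\Gamma(V;\mu_\Delta \mathscr{O}^{(0,n)}_{X\times X}) = \Gamma(V; H^n(\mu_\Delta \mathscr{O}^{(0,n)}_{X\times X})) = \shE(V)$.

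The principal obstacle is the abstract limit formula of the first step: translating sections of $\mu_\Delta F$ into a filtered colimit of local cohomology groups indexed by the geometric data $(U,G)$ is non-trivial and relies on the intricate behavior of the Fourier-Sato transform on polar-dual convex cones. In the present setting, however, this is exactly the content of \cite{KS1}, so I would import their result as a black box and concentrate only on the degree-$n$ concentration statement and the interchange of $H^n$ with $\ilim$, both of which are routine once the general formula is available.
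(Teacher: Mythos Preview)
The paper does not give a proof of this theorem at all: it is quoted verbatim from \cite{KS1} (Theorem 4.3.2 there) and used as a black box throughout. So there is no ``paper's own proof'' to compare your attempt against.

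That said, your outline is a faithful sketch of how the cited result is obtained in the Kashiwara--Schapira framework: one starts from the derived-category identity $R\Gamma(V;\mu_\Delta F)\simeq \ilim[U,G] R\Gamma_{G\cap U}(U;F)$ coming from the Fourier--Sato description of $\mu_\Delta$, specializes to $F=\mathscr{O}^{(0,n)}_{X\times X}$, and then passes to $H^n$. Your spectral-sequence argument for the last step is correct once concentration in degree $n$ is known, since $E_2^{p,q}=H^p(V;H^q(\mu_\Delta F))$ then has a single nonzero row. The one point I would tighten is the justification of that concentration: the phrase ``holomorphic Poincar\'e lemma in the normal directions'' gestures at the right mechanism but is not a proof; the clean statement is that $\mu_M\mathscr{O}_X$ is concentrated in degree equal to the complex codimension of $M$, and this is itself a nontrivial theorem (see e.g.\ \cite{KS2} or \cite{SKK}) rather than something one can read off immediately. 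If you intend to present this as a self-contained argument rather than a citation, that step deserves an explicit reference.
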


Next we recall the \v{C}ech-Dolbeault cohomology introduced by Suwa \cite{HIS},\cite{Suwa}.
Let $M$ be a closed subset of $X$, $V_0=X\setminus M$ and $V_1$ an open neighborhood of $M$ in $X$.
For a covering $\mathcal{V}=\{V_0,V_1\}$ of $X$ we set
\begin{equation}
\Cinf{p}{q}(\mathcal{V}) = \Cinf{p}{q}(V_0)\oplus\Cinf{p}{q}(V_1)\oplus\Cinf{p}{q-1}(V_{01}),
\end{equation}
where $V_{01}=V_0\cap V_1$.
We also set the differential $\bar{\vartheta}:\Cinf{p}{q}\rightarrow \Cinf{p}{q+1}$ by
\begin{equation}
\bar{\vartheta}(\omega_0,\omega_1,\omega_{01})=(\bp \omega_0,\bp\omega_1,\omega_1-\omega_0-\bp\omega_{01}).
\end{equation}
Then $\bar{\vartheta}\circ\bar{\vartheta}=0$ is easily shown and the pair $(\Cinf{p}{\bullet}(\mathcal{V}),\bar{\vartheta})$ is a complex.

\begin{defi}
The \v{C}ech-Dolbeault cohomology $H^{p,q}_{\bar{\vartheta}}(\mathcal{V})$ of $\mathcal{V}$ of type $(p,q)$ is the $q$-th cohomology of the complex $(\Cinf{p}{\bullet}(\mathcal{V}),\bar{\vartheta})$.
\end{defi}

Next we consider the subcomplex of $(\Cinf{p}{\bullet}(\mathcal{V}),\bar{\vartheta})$ defined below.
Let $\mathcal{V}'=\{V_0\}$ be a covering of $X\setminus M$.
We set
\begin{equation*}
\Cinf{p}{q}(\mathcal{V},\mathcal{V}')=\{ (\omega_0,\omega_1,\omega_{01}) \in\Cinf{p}{q}(\mathcal{V}) \mid \omega_0=0\}=\Cinf{p}{q}(V_1)\oplus\Cinf{p}{q}(V_{01}).
\end{equation*}
Then the pair $(\Cinf{p}{\bullet}(\mathcal{V},\mathcal{V}'),\bar{\vartheta})$ is a subcomplex of $(\Cinf{p}{\bullet}(\mathcal{V}),\bar{\vartheta})$.

\begin{defi}\label{CDdef}
The \v{C}ech-Dolbeault cohomology $H^{p,q}_{\bar{\vartheta}}(\mathcal{V},\mathcal{V}')$ is the $q$-th cohomology of the complex $(\Cinf{p}{\bullet}(\mathcal{V},\mathcal{V}'),\bar{\vartheta})$.
\end{defi}

We have the following proposition.
\begin{prop}[\cite{HIS}, Proposition 4.6]\label{uniqueC}
The \v{C}ech-Dolbeault cohomology $H^{p,q}_{\bar{\vartheta}}(\mathcal{V},\mathcal{V}')$ is independent of the choice of $V_1$ and determined uniquely up to isomorphism.
\end{prop}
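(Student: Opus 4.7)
The plan is to reduce the claim to the case of nested neighborhoods and then exhibit an explicit chain homotopy equivalence using a single smooth cutoff function. Since $V_0 = X\setminus M$ is fixed and only $V_1$ varies, this is essentially the standard argument that local cohomology is insensitive to the choice of open neighborhood.

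First, let $V_1$ and $\widetilde V_1$ be two open neighborhoods of $M$. Their intersection $V_1\cap \widetilde V_1$ is again an open neighborhood of $M$, so it suffices to treat the nested case: I would assume $V_1'\subset V_1$ with corresponding coverings $\mathcal{V}=\{V_0,V_1\}$ and $\widetilde{\mathcal{V}}=\{V_0,V_1'\}$, and show that the evident restriction map on complexes induces an isomorphism on cohomology.

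For the nested case, I would choose a cutoff $\chi\in C^\infty(X)$ with $\chi\equiv 1$ on some open neighborhood of $M$ and $\operatorname{supp}(\chi)\subset V_1'$. Define a restriction chain map
\[
r\colon \Cinf{p}{\bullet}(\mathcal{V},\mathcal{V}')\to \Cinf{p}{\bullet}(\widetilde{\mathcal{V}},\mathcal{V}'),\qquad r(\omega_1,\omega_{01})=(\omega_1|_{V_1'},\omega_{01}|_{V_{01}'}),
\]
and an extension map in the opposite direction
\[
e(\omega_1',\omega_{01}')=\bigl(\chi\omega_1'+\bp\chi\wedge\omega_{01}',\ \chi\omega_{01}'\bigr),
\]
each component extended by $0$ outside $\operatorname{supp}(\chi)$, which is smooth because $\bp\chi$ vanishes near $M$. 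A direct Leibniz computation, using $\bp^2=0$, confirms that $e$ commutes with $\bar{\vartheta}$, so $e$ is a chain map. Then I would construct the chain homotopy
\[
h(\omega_1,\omega_{01})=\bigl((\chi-1)\omega_{01},\ 0\bigr),
\]
where $(\chi-1)\omega_{01}$ is extended by $0$ to $V_1$ using that $\chi-1$ vanishes near $M$. A straightforward calculation, again via the Leibniz rule, yields
\[
(\bar{\vartheta}h+h\bar{\vartheta})(\omega_1,\omega_{01})=\bigl(\bp\chi\wedge\omega_{01}+(\chi-1)\omega_1,\ (\chi-1)\omega_{01}\bigr)=(e\circ r-\mathrm{id})(\omega_1,\omega_{01}),
\]
and the analogous identity on the $\widetilde{\mathcal{V}}$-side. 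Hence $r$ and $e$ are mutually inverse on cohomology, so $H^{p,q}_{\bar{\vartheta}}(\mathcal{V},\mathcal{V}')\cong H^{p,q}_{\bar{\vartheta}}(\widetilde{\mathcal{V}},\mathcal{V}')$, and composing the isomorphisms through $V_1\cap\widetilde V_1$ handles the general case.

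The only technical care required, and hence the main point to verify carefully, is that the formulas for $e$ and $h$ produce genuine elements of the $C^\infty$ complexes after extension by zero, i.e.~that the factors $\chi$, $\bp\chi$, and $\chi-1$ correctly compensate for the fact that $\omega_{01}$ is only defined on $V_{01}$. Once the cutoff $\chi$ is fixed with the three properties (smooth on $X$, equal to $1$ near $M$, supported in $V_1'$), these well-definedness issues are immediate, and the rest is a mechanical Leibniz computation; no analytic difficulty arises because everything takes place in the soft category of $C^\infty$-forms.
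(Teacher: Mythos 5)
Your proof is correct, and since the paper itself only quotes this statement from [HIS, Proposition 4.6] without reproducing an argument, there is nothing to diverge from: the cutoff-function homotopy you give (restriction $r$, extension $e(\omega_1',\omega_{01}')=(\chi\omega_1'+\bp\chi\wedge\omega_{01}',\,\chi\omega_{01}')$, homotopy $h(\omega_1,\omega_{01})=((\chi-1)\omega_{01},0)$) is exactly the standard argument used in the cited reference, and the identity $\bar{\vartheta}h+h\bar{\vartheta}=e\circ r-\mathrm{id}$, together with its mirror on the smaller covering, checks out with the paper's sign conventions for $\bar{\vartheta}$. The well-definedness of the zero-extensions is handled correctly by the three stated properties of $\chi$.
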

Therefore we can choose $X$ as $V_1$, and hereafter $H^{p,q}_{\bar{\vartheta}}(\mathcal{V},\mathcal{V}')$ is also denoted by $H^{p,q}_{\bar{\vartheta}}(X,X\setminus M)$.

\begin{theo}[\cite{HIS}, Theorem 4.9]\label{CDT}
There is a canonical isomorphism
\begin{equation}
H^{p,q}_{\bar{\vartheta}}(X,X\setminus S) \simeq H^q_S(X\,;\,\mathscr{O}^{(p)}_X).
\end{equation}
\end{theo}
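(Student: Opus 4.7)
The plan is to realize the Čech-Dolbeault subcomplex $(\Cinf{p}{\bullet}(\mathcal{V},\mathcal{V}'), \bar{\vartheta})$ as the mapping fiber of the Dolbeault restriction map, and then identify the resulting long exact sequence with the one defining the local cohomology $H^q_S(X;\mathscr{O}^{(p)}_X)$.

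First, by Proposition \ref{uniqueC} I may take $V_1=X$, so that $V_0=V_{01}=X\setminus S$ and
\[
\Cinf{p}{q}(\mathcal{V},\mathcal{V}')=\Cinf{p}{q}(X)\oplus\Cinf{p}{q-1}(X\setminus S),
\]
with differential $(\omega_1,\omega_{01})\mapsto(\bp\omega_1,\,\omega_1|_{X\setminus S}-\bp\omega_{01})$. Up to the usual sign convention this is exactly the mapping fiber (shifted mapping cone) of the restriction morphism $r:(\Cinf{p}{\bullet}(X),\bp)\to(\Cinf{p}{\bullet}(X\setminus S),\bp)$. Consequently I obtain a short exact sequence of complexes
\[
0\to\Cinf{p}{\bullet-1}(X\setminus S)\to\Cinf{p}{\bullet}(\mathcal{V},\mathcal{V}')\to\Cinf{p}{\bullet}(X)\to 0,
\]
whose associated long exact sequence relates $H^{p,\bullet}_{\bar{\vartheta}}(X,X\setminus S)$ to the $\bp$-cohomologies of $X$ and $X\setminus S$.

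Second, the classical Dolbeault theorem gives that $(\Cinf{p}{\bullet}(U),\bp)$ is a fine (hence $\Gamma$-acyclic) resolution of $\mathscr{O}^{(p)}_X|_U$ on every open $U\subset X$, so $H^q(\Cinf{p}{\bullet}(U))\simeq H^q(U;\mathscr{O}^{(p)}_X)$. Substituting $U=X$ and $U=X\setminus S$ turns the long exact sequence above into
\[
\cdots\to H^{q-1}(X\setminus S;\mathscr{O}^{(p)}_X)\to H^{p,q}_{\bar{\vartheta}}(X,X\setminus S)\to H^{q}(X;\mathscr{O}^{(p)}_X)\to H^{q}(X\setminus S;\mathscr{O}^{(p)}_X)\to\cdots,
\]
which I would match term by term with the standard local cohomology long exact sequence, and conclude by the five lemma that $H^{p,q}_{\bar{\vartheta}}(X,X\setminus S)\simeq H^q_S(X;\mathscr{O}^{(p)}_X)$.

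A cleaner reformulation is at the derived level: with $j:X\setminus S\hookrightarrow X$ the open inclusion, the distinguished triangle $\mathbb{R}\Gamma_S\mathscr{O}^{(p)}_X\to\mathscr{O}^{(p)}_X\to\mathbb{R}j_*j^{-1}\mathscr{O}^{(p)}_X\xrightarrow{+1}$, after replacing $\mathscr{O}^{(p)}_X$ by the fine Dolbeault resolution $\Cinf{p}{\bullet}$ (which is both $\Gamma(X;-)$- and $j_*$-acyclic by fineness), becomes on global sections the mapping fiber of $\Gamma(X;\Cinf{p}{\bullet})\to\Gamma(X\setminus S;\Cinf{p}{\bullet})$, and this complex is precisely $\Cinf{p}{\bullet}(\mathcal{V},\mathcal{V}')$. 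The main technical obstacle is checking that the connecting homomorphism coming from the cone exact sequence coincides, up to a standard sign, with the natural one for local cohomology; once that compatibility is verified, the five lemma closes the argument and produces a canonical isomorphism, as required.
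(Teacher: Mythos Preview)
The paper does not supply a proof of this theorem: it is quoted verbatim from \cite{HIS} (Theorem~4.9) and used as a black box, so there is no in-paper argument to compare against.

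That said, your argument is the standard one and is correct. Identifying $(\Cinf{p}{\bullet}(\mathcal{V},\mathcal{V}'),\bar\vartheta)$ (with $V_1=X$) as the mapping fiber of the Dolbeault restriction $\Cinf{p}{\bullet}(X)\to\Cinf{p}{\bullet}(X\setminus S)$, invoking Dolbeault on both $X$ and $X\setminus S$ (fineness of $\Cinf{p}{q}$), and then matching the two long exact sequences via the five lemma is exactly how results of this type are established; the derived-category paragraph you give is the cleanest packaging. The one point you flag---that the connecting map of the cone sequence agrees (up to sign) with restriction, hence with the connecting map in the local-cohomology triangle---is routine to verify directly from the explicit formula $(\omega_1,\omega_{01})\mapsto(\bp\omega_1,\omega_1|_{X\setminus S}-\bp\omega_{01})$: a class in $H^q(A)$ lifts to $(\omega_1,0)$, whose image under $\bar\vartheta$ is $(0,\omega_1|_{X\setminus S})$, so the boundary is $r$. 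With that, the five lemma applies and the isomorphism is canonical.
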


Applying Theorem \ref{CDT} to the cohomology $H^n_{G\cap U}(U;\mathscr{O}^{(0,n)}_{X\times X})$ in Theorem \ref{KS1} we get the \v{C}ech-Dolbeault expression of $\shE$.

\begin{defi}
The sheaf $C^{\infty,(p,q;r)}_{X\times X}$ is the sheaf of $(p+q,r)$-forms with coefficients in $C^\infty$-functions which are holomorphic $p$-forms with respect to the first variable, holomorphic $q$-forms with respect to the second variable and antiholomorphic $r$-forms with respect to the first and the second variables.
In other words, for a local coordinate $(z_1,z_2)$ of $X\times X$ and for an open subset $V$ of $X\times X$, a form $f(z_1,z_2)\in C^{\infty,(p,q;r)}_{X\times X}(V)$ is written by
\[
f(z,z')=\sum_{|I|=p,|J|=q,|K|=r}f_{IJK}(z_1,z_2)dz^I_1\wedge dz^J_2\wedge d\bar{z}^K,
\]
where each $f_{IJK}(z_1,z_2)$ is a $C^\infty$-function on $V$.
\end{defi}

Set $V_0=U\setminus G$, $V_1 = U$ and $V_{01}=V_0\cap V_1 = U\setminus G$.
For coverings $\mathcal{V}=\{V_0,V_1\}$ of $U$ and $\mathcal{V}'=\{V_0\}$ of $U\setminus G$, we define
\[
C^{\infty,(p,q;r)}_{X\times X}(\mathcal{V},\mathcal{V}')=C^{\infty,(p,q;r)}_{X\times X}(V_1)\oplus C^{\infty,(p,q;r-1)}_{X\times X}(V_{01}).
\]
The differential $\bar{\vartheta}:C^{\infty,(p,q;r)}_{X\times X}(\mathcal{V},\mathcal{V}')\rightarrow C^{\infty,(p,q;r+1)}_{X\times X}(\mathcal{V},\mathcal{V}')$ is also given as usual, and the pair $(C^{\infty,(p,q;\bullet)}_{X\times X}(\mathcal{V},\mathcal{V}'),\bar{\vartheta})$ is a complex.
\begin{defi}
The $r$-th \v{C}ech-Dolbeault cohomology $H^{p,q,r}_{\bar{\vartheta}}(\mathcal{V},\mathcal{V}')$ is the $r$-th cohomology of the complex $(C^{\infty,(p,q;\bullet)}_{X\times X}(\mathcal{V},\mathcal{V}'),\bar{\vartheta})$.
\end{defi}

Thanks to Proposition \ref{uniqueC} and Theorem \ref{CDT} we have the following.
\begin{theo}\label{CD-classical}
There is a canonical isomorphism
\begin{equation}
H^{0,n,n}_{\bar{\vartheta}}(U,U\setminus G) \simeq H^n_{G\cap U}(U\,;\,\mathscr{O}^{(0,n)}_{X\times X}).
\end{equation}
\end{theo}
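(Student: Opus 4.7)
The plan is to recognize that $(C^{\infty,(0,n;\bullet)}_{X\times X},\bp)$ is a fine resolution of the sheaf $\mathscr{O}^{(0,n)}_{X\times X}$ on $X\times X$, and then to run the mapping-cone argument that underlies Theorem \ref{CDT} with this resolution in place of the usual Dolbeault resolution of $\mathscr{O}^{(p)}_X$.

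First, I would verify the resolution claim. In a local coordinate $(z_1,z_2)$ on $X\times X$, every section of $C^{\infty,(0,n;r)}_{X\times X}$ is uniquely of the form $\omega\wedge dz_{2,1}\wedge\cdots\wedge dz_{2,n}$ with $\omega$ a $C^\infty$ antiholomorphic $r$-form on $X\times X$. Since $\bp(dz_{2,1}\wedge\cdots\wedge dz_{2,n})=0$, in these coordinates the complex $(C^{\infty,(0,n;\bullet)}_{X\times X},\bp)$ coincides with the standard $(0,\bullet)$-Dolbeault complex of $X\times X$ tensored with the trivial line generated by the holomorphic top form in $z_2$. The classical Dolbeault--Grothendieck lemma on $X\times X$ therefore produces a fine (hence $\Gamma_{G\cap U}$-acyclic) resolution
\begin{equation*}
0\longrightarrow \mathscr{O}^{(0,n)}_{X\times X}\longrightarrow C^{\infty,(0,n;0)}_{X\times X}\xrightarrow{\ \bp\ }C^{\infty,(0,n;1)}_{X\times X}\xrightarrow{\ \bp\ }\cdots.
\end{equation*}

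Second, with $V_0=U\setminus G$, $V_1=U$ and the coverings $\mathcal{V}=\{V_0,V_1\}$, $\mathcal{V}'=\{V_0\}$, the complex $(C^{\infty,(0,n;\bullet)}_{X\times X}(\mathcal{V},\mathcal{V}'),\bar{\vartheta})$ is by construction (up to the usual degree shift) the mapping cone of the restriction
\begin{equation*}
C^{\infty,(0,n;\bullet)}_{X\times X}(V_1)\longrightarrow C^{\infty,(0,n;\bullet-1)}_{X\times X}(V_{01}).
\end{equation*}
Combined with the acyclicity from the previous step, a standard hypercohomology computation identifies the $r$-th cohomology of this mapping cone with the local cohomology $H^r_{G\cap U}(U;\mathscr{O}^{(0,n)}_{X\times X})$; specializing to $r=n$ yields the stated isomorphism, and the independence of $V_1$ is guaranteed by the analogue of Proposition \ref{uniqueC}.

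The main obstacle, and it is a mild one, is simply to check that the arguments Honda--Izawa--Suwa employ in the proofs of Proposition \ref{uniqueC} and Theorem \ref{CDT} use nothing about $\mathscr{O}^{(p)}_X$ beyond the fact that it admits a fine $\bp$-resolution by $C^\infty$ forms. Once this is granted, the present theorem reduces entirely to the resolution property above, which, as explained, is the classical Dolbeault lemma on the larger manifold $X\times X$ applied in the holomorphic directions of $z_1$ and transversally to the fixed top form in $z_2$.
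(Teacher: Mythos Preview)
Your proposal is correct and matches the paper's own justification: the paper simply states that Theorem~\ref{CD-classical} follows from Proposition~\ref{uniqueC} and Theorem~\ref{CDT}, and your argument spells out exactly why---namely, that $(C^{\infty,(0,n;\bullet)}_{X\times X},\bp)$ is a fine resolution of $\mathscr{O}^{(0,n)}_{X\times X}$ (via the Dolbeault--Grothendieck lemma on $X\times X$), so the mapping-cone proof of Theorem~\ref{CDT} carries over verbatim.
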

Thus the  section of $\shE$ on an open convex cone $V$ is expressed by
\[
\shE(V) = \ilim[U,G] H^n_{G\cap U}(U\,;\,\mathscr{O}^{(0,n)}_{X\times X}) = \ilim[U,G] H^{0,n,n}_{\bar{\vartheta}}(U,U\setminus G),
\]
where $U$ and $G$ run through the same sets as those in Theorem \ref{KS1}.

\section{Two symbol classes}
While the classical symbol theory $\mathfrak{S}/\mathfrak{N}$ of $\shE$ is based on holomorphic functions, the \v{C}ech-Dolbeault expression of $\shE$ is based on $C^\infty$-functions, and hence it is difficult to construct the map from \v{C}ech-Dolbeault expression to the classical symbol class directly.
In this section we construct a new symbol class which is of $C^\infty$-type and show that the new symbol class is isomorphic to the classical symbol class.

\subsection{The sheaf $\mathfrak{S}/\mathfrak{N}$ of classical symbols}
Let us review the classical symbol theory introduced by Aoki \cite{A1},\cite{A2}.
Let $z^*=(z;\zeta)$ be a local coordinate system of $T^*X$.
We construct two conic sheaves $\mathfrak{S}$ and $\mathfrak{N}$ on $\ring{T}^*X$.

\begin{defi}
Let $V$ be an open cone in $\ring{T}^*X$.
\begin{enumerate}
\item A function $f(z,\zeta)$ is called a symbol on $V$ if the following conditions hold.
\begin{enumerate}
\renewcommand{\labelenumii}{$(\roman{enumii})$}
\item There exists an infinitesimal wedge $W$ of type $V$ such that
\[
f(z,\zeta)\in\mathscr{O}_{T^*X}(W).
\]

\item For any open cone $V'\cSubset V$ there exists an infinitesimal wedge $W'\subset W$ of type $V'$ such that $f(z,\zeta)$ satisfies the following condition:\par
For any constant $h>0$, there exists a constant $C>0$ such that
\begin{equation}\label{infra}
|f(z,\zeta)| \leq C\cdot e^{h|\zeta|} \ \mbox{ on } W'.
\end{equation}
\end{enumerate}
\item A symbol $f(z,\zeta)$ on $V$ is called a null-symbol if for any open cone $V'\cSubset V$ there exist an infinitesimal wedge $W'\subset W$ of type $V'$ and constants $h>0$ and $C>0$ such that
\begin{equation}\label{small}
|f(z,\zeta)|\leq C\cdot e^{-h|\zeta|} \ \mbox{ on } W'.
\end{equation}

\item We denote by $\mathfrak{S}(V)$ and $\mathfrak{N}(V)$ the set of all the symbols on $V$ and the set of all the null-symbols on $V$, respectively.
Moreover we set
\begin{alignat*}{1}
\mathfrak{S}_{z^*}&= \ilim[V\ni z^*]\mathfrak{S}(V), \\
\mathfrak{N}_{z^*}&= \ilim[V\ni z^*]\mathfrak{N}(V),
\end{alignat*}
where $V$ runs through the family of open conic neighborhoods of $z^*\in\ring{T}^*X$.
\end{enumerate}
\end{defi}

We can naturally extend the sheaves $\mathfrak{S}$ and $\mathfrak{N}$ to the sheaves on $T^*X$.
Define the sheaves $\mathfrak{S}|_{T^*_XX}$ and $\mathfrak{N}|_{T^*_XX}$ on the zero section $T^*_XX=X$ as follows.
\begin{enumerate}
\item Let $U$ be an open set in $X$.
The section $\mathfrak{S}|_{T^*_XX}(U)$ is a family of $f(z,\zeta)\in\mathscr{O}_{T^*X}(\pi^{-1}(U))$ which satisfies the condition below:\par
For any compact set $K\Subset U$ and for any constant $h>0$ there exists a constant $C>0$ such that
\[
|f(z,\zeta)|\leq C\cdot e^{h|\zeta|}\ \ \mbox{on $\pi^{-1}(K)$}.
\]
\item Set $\mathfrak{N}|_{T^*_XX}=0$.
\end{enumerate}
Then the sheaves $\mathfrak{S}$ and $\mathfrak{N}$ become ones on $T^*X$.

\par
Next we construct the quotient sheaf $\mathfrak{S}/\mathfrak{N}$.
\begin{prop}
Let $V$ be an open cone in $\ring{T}^*X$.
The section $\mathfrak{N}(V)$ is an ideal of $\mathfrak{S}(V)$.
\end{prop}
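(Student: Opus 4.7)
The plan is to verify the two defining properties of an ideal: (a) $\mathfrak{N}(V)$ is an additive subgroup of $\mathfrak{S}(V)$, and (b) for every $g\in\mathfrak{S}(V)$ and $f\in\mathfrak{N}(V)$ the product $gf$ lies in $\mathfrak{N}(V)$. The only nontrivial bookkeeping is compatibility of the infinitesimal wedges and the choice of the growth parameter $h$.

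First I would verify $\mathfrak{N}(V)\subset\mathfrak{S}(V)$. This is almost tautological: the definition of null-symbol already begins from a symbol $f$ on $V$, and since $e^{-h|\zeta|}\le 1\le e^{h'|\zeta|}$ for any $h,h'>0$, the decay estimate \eqref{small} is strictly stronger than \eqref{infra}.

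Next I would show closure under addition. Take $f_1,f_2\in\mathfrak{N}(V)$ defined on infinitesimal wedges $W_1,W_2$ of type $V$. For a fixed $V'\cSubset V$, extract wedges $W'_i\subset W_i$ of type $V'$ and constants $h_i,C_i>0$ with $|f_i|\le C_i\,e^{-h_i|\zeta|}$ on $W'_i$. The intersection $W':=W'_1\cap W'_2$ is again an infinitesimal wedge of type $V'$ (this is a direct consequence of Definition \ref{rcc}, since for $K\Subset V'$ one may apply both defining $\delta_i$ and use $\delta=\max(\delta_1,\delta_2)$). Then $|f_1+f_2|\le (C_1+C_2)e^{-\min(h_1,h_2)|\zeta|}$ on $W'$, which gives $f_1+f_2\in\mathfrak{N}(V)$. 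Closure under scalar multiples and taking negatives is immediate, so $\mathfrak{N}(V)$ is an additive subgroup.

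For closure under multiplication by $\mathfrak{S}(V)$, fix $g\in\mathfrak{S}(V)$ and $f\in\mathfrak{N}(V)$. For $V'\cSubset V$, pick a wedge $W'_f$ and constants $h_f,C_f>0$ realising the null-symbol bound for $f$, and a wedge $W'_g$ realising the symbol bound for $g$. The decisive point --- and really the whole content of the proposition --- is the asymmetry between the two definitions: the null-symbol rate $h_f$ is a fixed positive number, whereas in the symbol condition the exponent $h$ is a free parameter, so I am entitled to take $h:=h_f/2$, obtaining $C_g>0$ with $|g|\le C_g\,e^{(h_f/2)|\zeta|}$ on $W'_g$. On the infinitesimal wedge $W':=W'_f\cap W'_g$ of type $V'$ one then has
\[
|gf|\le C_gC_f\,e^{-(h_f/2)|\zeta|},
\]
which is exactly the null-symbol bound \eqref{small} for $gf$. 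Holomorphy of $gf$ on a common wedge is automatic from holomorphy of $g$ and $f$.

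The main obstacle is conceptual rather than computational: one must recognise that the symbol class is defined with the quantifiers ``for every $h>0$ there exists $C$'' while the null-symbol class fixes a single $h$, so their product can always be arranged to decay exponentially. The remainder is a routine verification that finite intersections of infinitesimal wedges of type $V'$ remain infinitesimal wedges of type $V'$, which follows immediately from Definition \ref{rcc}.
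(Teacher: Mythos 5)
Your proposal is correct and its core argument is exactly the one the paper uses: exploit the fact that the symbol estimate holds for \emph{every} $h>0$ to choose $h=h_f/2$ against the fixed decay rate $h_f$ of the null-symbol, then multiply the bounds on a common infinitesimal wedge. The additional verifications you include (that $\mathfrak{N}(V)$ is an additive subgroup and that intersections of infinitesimal wedges of type $V'$ are again such wedges) are routine and consistent with the paper, which omits them.
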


\begin{proof}
Let $f(z,\zeta)\in\mathfrak{N}(V)$ and $g(z,\zeta)\in\mathfrak{S}(V)$.
Then there exists an infinitesimal wedge $W$ of type $V$ such that $f(z,\zeta)$ and $g(z,\zeta)$ are holomorphic on $W$.
By the definition of $\mathfrak{N}$ for any $V'\cSubset V$ we can find an infinitesimal wedge $W'$ of type $V'$ and the constants $h>0$ and $C>0$ such that
\[
|f(z,\zeta)|\leq C\cdot e^{-h|\zeta|}.
\]
Similarly for $V'$, $W'$ and $h>0$ which are the same ones as above, we can find a constant $C'>0$ such that
\[
|g(z,\zeta)|\leq C'\cdot e^{\frac{1}{2}h|\zeta|}.
\]
Hence we obtain
\[
|f(z,\zeta)\cdot g(z,\zeta)|\leq C\cdot e^{-h|\zeta|}\cdot C'\cdot e^{\frac{1}{2}h|\zeta|}\leq CC'\cdot e^{-\frac{1}{2}h|\zeta|}.
\]
\end{proof}
One denotes by $\widehat{\mathfrak{S}/\mathfrak{N}}$ the presheaf defined by the correspondence for an open cone $V$ in $\ring{T}^*X$
\[
V\mapsto \mathfrak{S}(V)/\mathfrak{N}(V).
\]
Let $\mathfrak{S}/\mathfrak{N}$ be the associated sheaf to $\widehat{\mathfrak{S}/\mathfrak{N}}$.
We have the following exact sequence of sheaves
\begin{equation}\label{SNseq}
0\longrightarrow \mathfrak{N}\longrightarrow \mathfrak{S}\overset{\kappa_1}{\longrightarrow} \mathfrak{S}/\mathfrak{N}\longrightarrow 0.
\end{equation}
Here $\kappa_1$ is the composition of the canonical morphisms $\mathfrak{S}\rightarrow \widehat{\mathfrak{S}/\mathfrak{N}}\rightarrow \mathfrak{S}/\mathfrak{N}$,
and \eqref{SNseq} induces the long exact sequence
\[
0\rightarrow \mathfrak{N}(V) \rightarrow \mathfrak{S}(V) \rightarrow \Q(V) \rightarrow H^1(V;\mathfrak{N}) \rightarrow \cdots.
\]
To treat $\Q(V)$ as it is a quotient group $\mathfrak{S}(V)/\mathfrak{N}(V)$, we claim $H^1(V;\mathfrak{N})=0$ for a suitable $V$.
\begin{theo}\label{vanish}
Assume $X$ to be a complex vector space and let $\widetilde{V}$ be a closed cone in $\ring{T}^*X$.
Moreover assume that $\widetilde{V}$ satisfies the conditions C1, C2 and C3.
\begin{enumerate}
\renewcommand{\labelenumi}{C\arabic{enumi}.}
\item A family of conic open neighborhoods of $\widetilde{V}$ has a cofinal family which consists of Stein open cones in $\ring{T}^*X$.
\item The projection $\pi(\widetilde{V})$ is a compact set in $X$.
\item There exists $\zeta_0\in\mathbb{C}^n\setminus \{0\}$ such that
\[
\widetilde{V}\subset \{ (z;\zeta)\in \ring{T}^*X \mid z\in \pi(\widetilde{V}) , {\rm Re }\,\langle \zeta,\zeta_0 \rangle > 0  \}.
\]
\end{enumerate}
Then $H^k(\widetilde{V};\mathfrak{N})=0$ holds for any $k>0$.
\end{theo}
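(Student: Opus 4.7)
The plan is to resolve $\mathfrak{N}$ on a Stein cone neighborhood by a fine Dolbeault complex of $(0,q)$-forms with exponential decay, and then to solve the resulting $\bar\partial$-equation by an exponential shift keyed to the covector $\zeta_0$ of C3.

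First, by C1 we may write $H^k(\widetilde{V};\mathfrak{N}) = \varinjlim_W H^k(W;\mathfrak{N})$, the colimit taken over a cofinal family of Stein open cones $W$ containing $\widetilde{V}$. Combining C2 and C3 with a compactness argument, we may further restrict to those $W$ on which $\operatorname{Re}\,\langle\zeta,\zeta_0\rangle \ge c|\zeta|$ holds uniformly for some $c = c(W) > 0$. Next, introduce on $W$ the sheaf $\mathfrak{N}^{(0,q)}$ of $(0,q)$-forms with $C^\infty$ coefficients obeying the same exponential-decay condition as $\mathfrak{N}$; these are fine by partitions of unity, and the complex $\mathfrak{N} \to \mathfrak{N}^{(0,\bullet)}$ is a resolution by a local weighted $\bar\partial$-lemma on small polydiscs. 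Hence the vanishing $H^k(W;\mathfrak{N}) = 0$ for $k > 0$ reduces to the solvability of $\bar\partial u = \omega$ within the global sections of this complex.

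For that global solvability, exploit the holomorphic unit $e^{\mu\langle\zeta,\zeta_0\rangle}$: for $\omega \in \mathfrak{N}^{(0,q)}(W)$ with $|\omega| \le C e^{-h|\zeta|}$ and any $0 < \mu < h/|\zeta_0|$, multiplication converts the decay into boundedness, since $|e^{\mu\langle\zeta,\zeta_0\rangle}\omega| \le C e^{(\mu|\zeta_0|-h)|\zeta|}$. Because $e^{\mu\langle\zeta,\zeta_0\rangle}$ is holomorphic in $\zeta$, multiplication commutes with $\bar\partial$, and H\"ormander's $L^2$-estimate on the Stein cone $W$ with the plurisubharmonic weight $-2\mu\,\operatorname{Re}\,\langle\zeta,\zeta_0\rangle$ produces a $\bar\partial$-primitive $\tilde u$ with a comparable bound. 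The pullback $u := e^{-\mu\langle\zeta,\zeta_0\rangle}\tilde u$ satisfies $\bar\partial u = \omega$, and the lower bound $\operatorname{Re}\,\langle\zeta,\zeta_0\rangle \ge c|\zeta|$ yields $|u| \le M e^{-\mu c|\zeta|}$, so $u \in \mathfrak{N}^{(0,q-1)}(W)$. (The resulting decay rate $\mu c$ may be smaller than $h$, but this is harmless since we only require $u$ in $\mathfrak{N}^{(0,q-1)}$.) Passing to the colimit in $W$ produces the desired vanishing.

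The main obstacle is this weighted $\bar\partial$-solvability, tailored to the conic geometry of $W$ and to the infinitesimal-wedge convention underlying $\mathfrak{N}$: Cartan's Theorem B on $\mathscr{O}_W$ alone supplies only unweighted solutions, and propagating the exponential bound through the $\bar\partial$-inversion requires either H\"ormander's $L^2$-estimate with the plurisubharmonic weight above or an explicit integral kernel matched to the shape of $W$. Once this weighted Dolbeault lemma is in hand, the reductions of the preceding paragraphs complete the proof.
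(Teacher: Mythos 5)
Your proposal follows essentially the same route as the paper: reduce via C1 and C2 to a Stein (pseudoconvex) conic neighborhood on which $\mathrm{Re}\,\langle\zeta,\zeta_0\rangle\geq c|\zeta|$, resolve $\mathfrak{N}$ by a soft/fine Dolbeault complex of rapidly decreasing $(0,q)$-forms, and solve $\bar{\partial}u=\omega$ by conjugating with the holomorphic unit $e^{\mu\langle\zeta_0,\zeta\rangle}$ and applying H\"ormander's weighted $L^2$ existence theorem. The one place where you diverge, and where a small repair is needed, is the choice of function class for the resolution: you measure the decay of your sheaves $\mathfrak{N}^{(0,q)}$ in the sup norm, but Theorem \ref{Hor} returns a solution controlled only in a weighted $L^2$ norm, so the asserted pointwise bound $|u|\leq Me^{-\mu c|\zeta|}$ does not follow directly from the ``comparable bound'' you invoke. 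The paper sidesteps exactly this by building the resolution from the sheaves $\tilde{\mathscr{L}}^{(0,q)}_{2,loc}$ of rapidly decreasing locally $L^2$ forms on the radial fiber compactification $\hat{T}^*X$ (so that the $L^2$ output of H\"ormander's theorem already lies in the correct class, and softness rather than fineness suffices); alternatively, you could keep your sup-norm sheaves and recover pointwise exponential decay of $u$ from the weighted $L^2$ bound on $u$ and on $\bar{\partial}u=\omega$ by interior estimates on balls of fixed radius. Either fix is routine, and with it your argument matches the paper's proof.
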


The conditions C1, C2 and C3 are collectively called condition C.
Before starting the proof we construct a soft resolution of $\mathfrak{N}$.

\begin{exam}
We can construct a closed cone $\widetilde{V}$ satisfying the above three conditions as follows.
Let $N$ be a natural number and $f_1(z),f_2(z),\dots,f_N(z)$ holomorphic functions on $X$.
Set
$$
B= \bigcap^N_{i=1} \{ |f_i(z)| \leq 1\},
$$
and assume $B$ to be compact, and let $\Gamma$ be a closed proper convex cone.
Then $\widetilde{V}=B\times \Gamma$ satisfies the second and the third conditions in Theorem \ref{vanish}.
A cofinal family of $B\times\Gamma$ is given in the following way.
We can take a family $\{B_\varepsilon\}_{\varepsilon\in \mathbb{R}_+}$ of open neighborhoods of $B$ as follows
$$
B_\varepsilon = \bigcap_{1\leq i \leq N} \{ |f_i(z)| < 1+\varepsilon\}.
$$
Since $\Gamma$ is a closed proper convex cone we can take a cofinal family $\{\Gamma_\lambda\}_{\lambda\in \Lambda}$ which consists of open convex conic neighborhoods of $\Gamma$.
Then the family $\{B_\varepsilon\times \Gamma_\lambda\}_{(\varepsilon,\lambda)\in \mathbb{R}_+\times \Lambda}$ is what we want.
\end{exam}

\begin{defi}
One define the radial compactification $\mathbb{D}_{\mathbb{C}^n}$ of $\mathbb{C}^n$ by
\[
\mathbb{D}_{\mathbb{C}^n}=\mathbb{C}^n\sqcup S^{2n-1}\infty.
\]
\end{defi}
We show the fundamental system of neighborhoods.
If $z_0$ belongs to $\mathbb{C}^n$ a family of fundamental neighborhoods of $z_0$ consists of open sets
\begin{equation*}
B_\varepsilon(z_0)=\{  z\in \mathbb{C}^n \mid |z-z_0|<\varepsilon \}
\end{equation*}
for $\varepsilon>0$, otherwise that of $z_0\infty\in S^{2n-1}\infty$ consists of open sets
\[
G_r(\Gamma)=\left\{ z\in\mathbb{C}^n \  \middle| \  |z|>r,\frac{z}{|z|}\in\Gamma\right\}
\sqcup \Gamma,
\]
where $r>0$ and $\Gamma$ is an open neighborhood of $z_0\infty$ in $S^{2n-1}\infty$.
\begin{figure}[H]
\centering
\begin{tikzpicture}

\draw[gray, name path=c] (0,0) -- (3,2);
\draw[gray, name path=d] (0,0) -- (2,3);

\draw[dotted,name path=s](2,0) arc (0:90:2);

\draw[name intersections={of=s and c}];
\tikzmath{
coordinate \c;
\c= (intersection-1);
};

\draw(3,2) arc (33.69:56.31:3.6055);
\draw(\c) arc (33.69:56.31:2);
\draw(\c) -- (3,2);

\draw[name intersections={of=s and d}];
\tikzmath{
coordinate \d;
\d= (intersection-1);
};
\draw(\d) -- (2,3);

\draw[gray] (2,2) to [out=340, in=110] (3,1);
\draw[gray] (2.9,2.155) to [out=20, in=110] (3.5,2);

\draw (2.7495,2.5495) node[above]{$z_0\infty$};
\draw (3.4,1.8) node[right]{$\Gamma$};
\draw (3.5,0.8) node{$G_r(\Gamma)$};
\draw (2,0) node[right]{$r$};
\draw (0,0) node[left]{$O$};

\fill (2.5495,2.5495) circle [radius=2pt];
\end{tikzpicture}
\caption{$G_r(\Gamma)$}

\end{figure}

One denote by $\dbo{V}$ the closure of $V$ in $\mathbb{D}_{\mathbb{C}^n}$.

\begin{defi}
The radial compactification $\hat{T}^*X$ of $T^*X$ with respect to the fiber is
\[
\hat{T}^*X=\bigsqcup_{z\in X} \overline{\overline{T^*_zX}}.
\]
Here $\dbo{T^*_zX}\simeq \dbo{\mathbb{C}^n_\zeta} = \mathbb{C}^n_\zeta\sqcup S^{2n-1}\infty$.
\end{defi}
The topology of $\hat{T}^*X$ is induced from that of $\mathbb{D}_{\mathbb{C}^n}$.
Let $V$ be an open set in $\hat{T}^*X$.
We introduce some sheaves on $\hat{T}^*X$.
\begin{defi}
\begin{enumerate}
\item Let $\tilde{L}_{2,loc}$ be the sheaf of rapidly decreasing locally $L^2$-functions.
That is, for an open set $V \subset \hat{T}^*X$,
a function $f(z,\zeta)$ belongs to $\tilde{L}_{2,loc}(V)$ if for any compact set $W$ in $V$ there exists a constant $h>0$ such that
\[
f(z,\zeta)\cdot e^{h|\zeta|}\in L^2(W \cap T^*X).
\]

\item Let $\tilde{L}^{(p,q)}_{2,loc}$ be the sheaf of $(p,q)$-forms with coefficients in $\tilde{L}_{2,loc}$.

\item The sheaf $\tilde{\mathscr{L}}^{(p,q)}_{2,loc}$ is the subsheaf of $\tilde{L}^{(p,q)}_{2,loc}$ defined below:\par
A $(p,q)$-form $f\in\tilde{L}^{(p,q)}_{2,loc}(V)$ belongs to $\tilde{\mathscr{L}}^{(p,q)}_{2,loc}(V)$ if $\bp f(z,\zeta)\in\tilde{L}^{(p,q+1)}_{2,loc}(V)$.
\end{enumerate}
\end{defi}

\begin{lemm}\label{Nsoft}
The following resolution is soft:
\begin{equation}\label{Nres}
0\rightarrow \mathfrak{N} \overset{\bp}{\rightarrow} \tilde{\mathscr{L}}^{(0,0)}_{2,loc} \overset{\bp}{\rightarrow} \tilde{\mathscr{L}}^{(0,1)}_{2,loc} \overset{\bp}{\rightarrow} \cdots \overset{\bp}{\rightarrow} \tilde{\mathscr{L}}^{(0,2n)}_{2,loc} \rightarrow 0.
\end{equation}

\end{lemm}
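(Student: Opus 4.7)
The plan is to verify two independent facts: that each sheaf $\tilde{\mathscr{L}}^{(0,q)}_{2,loc}$ is soft on $\hat T^*X$, and that the complex \eqref{Nres} is exact as a complex of sheaves. Together these yield the soft resolution claimed.

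Softness follows from a standard partition-of-unity argument. At any point of $\hat T^*X$, including points of the sphere at infinity, one can construct $C^\infty$ cutoff functions $\chi$ adapted to the topology of $\hat T^*X$. Multiplying a section $f\in\tilde{\mathscr{L}}^{(0,q)}_{2,loc}(V)$ by such a $\chi$ preserves the weighted local $L^2$ condition $f\cdot e^{h|\zeta|}\in L^2_{loc}$ (because $\chi$ is bounded), and also the $\bp$-regularity condition, since $\bp(\chi f)=\chi\bp f+\bp\chi\wedge f$ and both summands inherit the weighted $L^2$ bound. Hence sections extend by zero past their supports and the sheaves are soft.

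For exactness, one first identifies $\ker\bp$ at degree $0$ with $\mathfrak N$. A section $f\in\tilde{\mathscr{L}}^{(0,0)}_{2,loc}(V)$ with $\bp f=0$ is holomorphic by elliptic regularity, and on any relatively compact subset of $V$ the mean-value inequality applied to the subharmonic function $|f|^2$ on a polydisc $B_\delta$ of fixed radius $\delta$ around $(z_0,\zeta_0)$ gives
\[
|f(z_0,\zeta_0)|^2 \leq \frac{e^{2h\delta}}{\mathrm{vol}(B_\delta)}\,e^{-2h|\zeta_0|}\int_{B_\delta}|f|^2 e^{2h|\zeta|}\,dV,
\]
so $|f(z_0,\zeta_0)|\leq Ce^{-h|\zeta_0|}$, placing $f$ in $\mathfrak N(V)$. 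For exactness in positive degree, given a $\bp$-closed germ $f$ of $\tilde{\mathscr{L}}^{(0,q)}_{2,loc}$, one seeks a germ $u\in\tilde{\mathscr{L}}^{(0,q-1)}_{2,loc}$ with $\bp u=f$. At a point of $T^*X$ the usual $\bp$-Poincaré lemma on a small polydisc produces $u$, and the weighted $L^2$ bound is automatic on the relatively compact neighborhood. At a boundary point $(z_0,\zeta_0\infty)$ one chooses a sufficiently narrow pseudoconvex conic neighborhood on which the pluriharmonic function $\mathrm{Re}\,\langle\zeta,\zeta_0\rangle/|\zeta_0|$ is comparable from below to $|\zeta|$, and applies Hörmander's $L^2$ $\bp$-theorem with the plurisubharmonic weight $\varphi(z,\zeta)=-2h'\mathrm{Re}\,\langle\zeta,\zeta_0\rangle/|\zeta_0|$ for any $h'<h$; this yields a solution $u$ with $\int|u|^2 e^{2h'|\zeta|}\,dV<\infty$, as required.

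The main obstacle is this last step: coaxing Hörmander's theorem into delivering \emph{exponentially} decaying solutions. Since $|\zeta|$ itself is not the real part of a holomorphic function, its negative is not plurisubharmonic, so one cannot use it directly as a weight. Replacing it by the pluriharmonic linear function $\mathrm{Re}\,\langle\zeta,\zeta_0\rangle$ is the natural fix but forces the neighborhoods to be narrow in the angular direction around $\zeta_0$; one must therefore verify that these narrow pseudoconvex cones form a cofinal family of neighborhoods of each point $(z_0,\zeta_0\infty)$ at infinity in $\hat T^*X$, and that pseudoconvexity of such cones can be arranged, e.g., by intersecting with sufficiently small polydiscs in the base variable.
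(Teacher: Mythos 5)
Your proposal is correct and follows essentially the same route as the paper: the crucial step in both is to replace the non-plurisubharmonic weight $|\zeta|$ by the pluriharmonic linear function ${\rm Re}\,\langle\zeta,\zeta_0\rangle$ on a narrow pseudoconvex cone where the two are comparable from below, and then to invoke H\"ormander's $L^2$ existence theorem (the paper multiplies $f$ by $e^{h'\langle\zeta_0,\zeta\rangle}$ and uses $-\log d$ as the plurisubharmonic weight, which is equivalent to your choice of weight). The paper organizes the argument over closed cones satisfying its condition C and passes to inductive limits rather than working stalkwise, and it leaves both the softness of the sheaves $\tilde{\mathscr{L}}^{(0,q)}_{2,loc}$ and the identification of the degree-zero kernel with $\mathfrak{N}$ implicit, whereas you verify these correctly.
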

In order to prove Lemma \ref{Nsoft}, it suffices to show the sequence
\begin{equation}\label{exactL}
0 \rightarrow \tilde{\mathscr{L}}^{(0,0)}_{2,loc}(\widetilde{V}) \rightarrow \tilde{\mathscr{L}}^{(0,1)}_{2,loc}(\widetilde{V}) \rightarrow \cdots \overset{\bp}{\rightarrow} \tilde{\mathscr{L}}^{(0,2n)}_{2,loc}(\widetilde{V}) \rightarrow 0
\end{equation}
is exact for any $\widetilde{V}$ which satisfies the condition C.
Actually we obtain Lemma \ref{Nsoft} by applying the inductive limit $\ilim[{\rm Int}\,(\widetilde{V}) \ni z^*]$ to \eqref{exactL}.\par

The following theorem is crucial in the proof of the exactness of \eqref{exactL}.
\begin{theo}[\cite{L1}, Theorem 4.4.2]\label{Hor}
Let $\Omega$ be a pseudoconvex open set in $\mathbb{C}^n$ and $\varphi$ any plurisubharmonic function in $\Omega$.
For every $g\in L^{(p,q)}_2(\Omega,\varphi)$ with $\bp g=0$ there is a solution $u\in L^{(p,q)}_{2,loc}(\Omega)$ of the equation $\bp u=g$ such that
\[
\int_\Omega |u|^2e^{-\varphi}(1+|z|^2)^{-2}d\lambda \leq \int_\Omega|g|^2e^{-\varphi}d\lambda.
\]
\end{theo}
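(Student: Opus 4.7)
The plan is to realize the $\bp$-equation in weighted Hilbert spaces and invoke a Hahn--Banach style duality. Introduce three weights on $\Omega$ by
$$
\varphi_1 = \varphi - 2\log(1+|z|^2), \quad \varphi_2 = \varphi - \log(1+|z|^2), \quad \varphi_3 = \varphi,
$$
and consider the densely defined closed operators $T = \bp \colon L^2_{(p,q-1)}(\Omega,\varphi_1) \to L^2_{(p,q)}(\Omega,\varphi_2)$ and $S = \bp \colon L^2_{(p,q)}(\Omega,\varphi_2) \to L^2_{(p,q+1)}(\Omega,\varphi_3)$, both defined in the sense of currents, so that $ST = 0$. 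With $g$ as in the statement, the existence of $u$ with $\bp u = g$ and the claimed weighted bound is, by the standard functional analytic lemma (Hahn--Banach plus Riesz representation), equivalent to the a priori inequality
$$
\|f\|_{\varphi_2}^2 \leq \|T^* f\|_{\varphi_1}^2
$$
for all $f \in \mathrm{Dom}(T^*) \cap \mathrm{Ker}(S)$. The choice of the three weights is engineered precisely to convert this inequality into the $(1+|z|^2)^{-2}$ factor appearing on the left of the asserted integral bound.

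The analytic heart of the proof is a Bochner--Kodaira--Nakano identity. For $f \in C^\infty_c(\Omega)$ with values in $(p,q)$-forms, integration by parts yields an identity of the schematic form
$$
\|T^* f\|_{\varphi_1}^2 + \|Sf\|_{\varphi_3}^2 = \sum \int_\Omega \frac{\partial^2 \varphi_2}{\partial z_j \partial \bar z_k}\, f_{I,jK}\, \overline{f_{I,kK}}\, e^{-\varphi_2}\, d\lambda + R(f),
$$
where $R(f) \geq 0$ collects nonnegative gradient terms. Plurisubharmonicity of $\varphi$ contributes a nonnegative Levi form, while the complex Hessian of $-\log(1+|z|^2)$ is comparable to $(1+|z|^2)^{-2} \cdot I$, so the Hessian of $\varphi_2$ dominates $(1+|z|^2)^{-2} \cdot I$. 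Combined with the weight relation $e^{-\varphi_1} = (1+|z|^2)^2 e^{-\varphi_2}$, the right-hand side bounds $\|f\|_{\varphi_2}^2$ from below, which is exactly the required a priori estimate on $C^\infty_c$-forms.

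The main obstacle, and the only place where pseudoconvexity of $\Omega$ is essentially used, is to extend this estimate from smooth compactly supported forms to the full $\mathrm{Dom}(T^*)$. I would choose a strictly plurisubharmonic $C^\infty$-exhaustion $\psi$ of $\Omega$ (available since $\Omega$ is pseudoconvex), cut off a given $f$ by means of $\chi(\psi/\nu)$ with $\nu \to \infty$, and then mollify by Friedrichs regularization. The task is to verify that these approximants converge to $f$ in the graph norms of both $T^*$ and $S$; the commutator errors produced by the cutoff are absorbed by adjusting $\varphi$ to dominate derivatives of $\psi$, which is the standard pseudoconvex trick. Once density is established, the pointwise inequality on $C^\infty_c$-forms extends by closedness of $T^*$ to all of $\mathrm{Dom}(T^*) \cap \mathrm{Ker}(S)$, and the Hahn--Banach step then produces the desired solution $u$ with the required weighted $L^2$ bound.
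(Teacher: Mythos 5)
First, a point of comparison: the paper does not prove this statement at all --- it is quoted as Theorem 4.4.2 of H\"ormander's book \cite{L1} and used as a black box in the proof of Lemma \ref{Nsoft} --- so what you have written is an attempt to reprove H\"ormander's theorem. Your architecture is indeed the standard one (three weighted $L^2$ spaces, reduction by duality to an a priori estimate, the Morrey--Kohn--H\"ormander identity, density of compactly supported forms via a plurisubharmonic exhaustion of the pseudoconvex set), and the density step you describe is exactly where pseudoconvexity enters.

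However, your weights are oriented the wrong way, and this is load-bearing rather than cosmetic. The complex Hessian of $\log(1+|z|^2)$ satisfies $\bigl(\partial^2\log(1+|z|^2)/\partial z_j\partial\bar z_k\bigr)\geq (1+|z|^2)^{-2}I$ by Cauchy--Schwarz; hence the Hessian of $-\log(1+|z|^2)$ is bounded \emph{above} by $-(1+|z|^2)^{-2}I$, and for a merely plurisubharmonic $\varphi$ (say $\varphi\equiv 0$) the Hessian of your $\varphi_2=\varphi-\log(1+|z|^2)$ is negative definite: it does not dominate $(1+|z|^2)^{-2}I$ as you assert, so the key lower bound in your Bochner--Kodaira step fails. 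Correspondingly, since your $e^{-\varphi_1}=(1+|z|^2)^{2}e^{-\varphi}$, the duality lemma would output $\int|u|^2(1+|z|^2)^{2}e^{-\varphi}\,d\lambda\leq\int|g|^2(1+|z|^2)e^{-\varphi}\,d\lambda$, which is not the asserted inequality and is false in general: one cannot gain decay of $u$ relative to $g$ (already for $n=1$, $\varphi=0$, $g$ compactly supported with $\int g\,d\lambda\neq 0$, every solution satisfies $|u|\gtrsim|z|^{-1}$ at infinity, so $\int|u|^2(1+|z|^2)^2\,d\lambda=\infty$). The repair is to \emph{add} the logarithmic terms: take $\Phi=\varphi+2\log(1+|z|^2)$ as the relevant weight (together with H\"ormander's auxiliary function $\psi$ controlling $|\bar{\partial}\eta_\nu|^2$ in the cutoff/density argument), so that its Levi form is $\geq 2(1+|z|^2)^{-2}I$; the a priori estimate then reads $\int(1+|z|^2)^{-2}|f|^2e^{-\Phi}\,d\lambda\lesssim\|T^*f\|^2+\|Sf\|^2$, and the duality step must be run with the weighted Cauchy--Schwarz inequality $|\langle g,f\rangle_\Phi|^2\leq\bigl(\int(1+|z|^2)^{2}|g|^2e^{-\Phi}\,d\lambda\bigr)\bigl(\int(1+|z|^2)^{-2}|f|^2e^{-\Phi}\,d\lambda\bigr)$, which produces exactly $\int|u|^2e^{-\varphi}(1+|z|^2)^{-2}\,d\lambda\leq\int|g|^2e^{-\varphi}\,d\lambda$. (Note also that your stated relation $e^{-\varphi_1}=(1+|z|^2)^2e^{-\varphi_2}$ is inconsistent with your own definitions, which give $e^{-\varphi_1}=(1+|z|^2)e^{-\varphi_2}$.)
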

\begin{rema}
In Theorem \ref{Hor} we adopt H\"{o}rmander's notation.
A form $g\in L^{(p,q)}_2(\Omega,\varphi)$ is a $(p,q)$-form on $\Omega$ with coefficients in square integrable functions with respect to the measure $e^{-\varphi}d\lambda$.
\end{rema}
Now we show the exactness of \eqref{exactL}.
Let $\widetilde{V}$ be a closed cone satisfying the condition C and set $\tilde{f}(z,\zeta)\in\tilde{\mathscr{L}}^{(0,q+1)}_{2,loc}(\widetilde{V})$.
Then there exist a Stein open cone $V_1$ with $\widetilde{V}\cSubset V_1$ and $f(z,\zeta)\in\tilde{\mathscr{L}}^{(0,q+1)}_{2,loc}(V_1)$ such that $\tilde{f}(z,\zeta)=f(z,\zeta)$ on $\widetilde{V}$.
Fix $\zeta_0\in \mathbb{C}^n\setminus\{0\}$ satisfying the condition C3.
Particularly we can assume $|\zeta_0|=1$ without loss of generality.
By the definition of $\tilde{\mathscr{L}}^{(0,q+1)}_{2,loc}$ there exist an infinitesimal wedge $W_	1$ of type $V_1$ and a constant $h>0$ such that
\[
f(z,\zeta)\cdot e^{h|\zeta|} \in L^{(0,q+1)}_2(W_1).
\]
Let $V_2$ be a Stein open cone with $\widetilde{V}\cSubset V_2\cSubset V_1$.
By Definition \ref{rcc} there exists a compact set $K$ of $V_1$ such that
\[
V_2=\{(z;t\zeta) \mid t\in\mathbb{R}_+,(z,\zeta)\in K\}.
\]
For such a compact set $K$ we can find a constant $\delta>0$ such that
\[
K_\delta=\{(z;t\zeta) \mid (z;\zeta)\in K,\,t>\delta \} \subset W.
\]

\begin{figure}[H]
\centering
\begin{tikzpicture}

\draw (0,0) -- (3,1);
\draw (0,0) -- (1,3);
\draw (3,1) arc (18.435:71.565:3.1623);

\draw[gray] (-1.5,1.5) -- (1.5,-1.5);
\draw[->] (0,0) -- (0.5,0.5);
\draw (0.5,0.5) node[right]{$\zeta_0$};

\draw (2.8,1.2) to [out=290, in=180] (3.5,0.8);
\draw (1.5,0.8) to [out=290, in=180] (2.5,0);

\draw (1,3) to [out=251.565, in=135] (1.5,1.5);
\draw (3,1) to [out=198.435, in=315] (1.5,1.5);

\draw(2.5,0) node[right]{$V_1$};
\draw(3.5,0.8) node[right]{$W_1$};
\draw (0,0) node[left]{$O$};

\end{tikzpicture}
\begin{tikzpicture}

\draw[dotted] (0,0) -- (3,1);
\draw[dotted] (0,0) -- (1,3);
\draw (3,1) arc (18.435:71.565:3.1623);

\draw (0,0) -- (1.754,2.631);
\draw (0,0) -- (2.631,1.754);

\draw[gray] (0.1,3.1) -- (3.1,0.1);

\draw[gray] (-1.5,1.5) -- (1.5,-1.5);
\draw[->] (0,0) -- (0.5,0.5);
\draw (0.5,0) node[right]{$\zeta_0$};

\draw[dotted] (1,3) to [out=251.565, in=135] (1.5,1.5);
\draw[dotted] (3,1) to [out=198.435, in=315] (1.5,1.5);

\draw (2,2) to [out=40, in=150] (3,2.1);
\draw (3,2.1) node[right]{$W_2$};

\draw (1,1) to [out=270, in=150] (1.5,0);
\draw (1.5,0) node[right]{$V_2$};

\draw (0,0) node[left]{$O$};
\draw (3.5,0.1) node[above]{$H_{\zeta_0}(\delta)$};

\end{tikzpicture}
\caption{$W_1$ and $W_2$}

\end{figure}

Hence for an open half space
\[
H_{\zeta_0}(\delta)=\{(z;\zeta)\in T^*X \mid z\in\pi(V_1),\ {\rm Re}\,\langle \zeta-\delta\zeta_0,\zeta_0 \rangle>0\},
\]
the set $W_2 = V_2 \cap H_{\zeta_0}(\delta)$ is convex, relatively compact set of $V_1$ and a Stein infinitesimal wedge of type $V_2$.
Thus we can fix a small constant $h'<h$ such that a function $f(z,\zeta)\cdot e^{h'\langle \zeta_0,\zeta\rangle}$ satisfies the condition C3.
Here we set a plurisubharmonic function $\varphi$ by
\[
\varphi=-\log(d(z,\zeta)),
\]
where $d(z,\zeta)={\rm dist}\,((z;\zeta),\partial W_2)$.
Set $F(z,\zeta) = f(z,\zeta)\cdot e^{h'\langle \zeta_0,\zeta\rangle}$.
By Theorem \ref{Hor} we can find $u(z,\zeta)\in L^{(0,q)}_{2,loc}(W_2)$ such that $\bp u=F$ and
\begin{equation}\label{ineq1}
\int_{W_2}(|\zeta|^2+1)^{-2}|u(z,\zeta)|^2\cdot e^{-\varphi} d\lambda \leq \int_{W_2} |F(z,\zeta)|^2\cdot e^{-\varphi} d\lambda< \infty.
\end{equation}
Therefore we have
\[
\bp g=e^{-h'\langle \zeta_0,\zeta\rangle}\cdot \bp u(z,\zeta) = e^{-h'\langle \zeta_0,\zeta\rangle}\cdot F(z,\zeta) = f(z,\zeta).
\]
In particular such a $g(z,\zeta)$ belongs to $\tilde{\mathscr{L}}^{(0,q)}_{2,loc}(\widetilde{V})$ because of \eqref{ineq1} and the exactness of \eqref{exactL} has been proved.\par
Finally we can calculate the global cohomology $H^k(\widetilde{V};\mathfrak{N})$ by using the resolution $\Gamma(\widetilde{V};\tilde{\mathscr{L}}^{(0,\bullet)}_{2,loc})$ and get the vanishing $H^k(\widetilde{V};\mathfrak{N})=0$ of the higher global cohomology for an arbitrary natural number $k$.

\begin{coro}
Let $\widetilde{V}$ be a closed cone satisfying the condition C.
Then an arbitrary element $f(z,\zeta) \in\Q(\widetilde{V})$ is represented by some symbol $f'(z,\zeta)\in\mathfrak{S}(\widetilde{V})$.
\end{coro}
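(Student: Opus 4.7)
The plan is to extract the corollary directly from the long exact sequence in sheaf cohomology induced by the short exact sequence (\ref{SNseq}) of sheaves on $\ring{T}^*X$. Applied to the closed cone $\widetilde{V}$, this sequence---already displayed just before Theorem \ref{vanish}---reads
$$
0 \to \mathfrak{N}(\widetilde{V}) \to \mathfrak{S}(\widetilde{V}) \xrightarrow{\kappa_1} \Q(\widetilde{V}) \xrightarrow{\delta} H^1(\widetilde{V};\mathfrak{N}) \to \cdots .
$$
By Theorem \ref{vanish}, the hypothesis that $\widetilde{V}$ satisfies condition C gives $H^1(\widetilde{V};\mathfrak{N}) = 0$. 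Hence the connecting map $\delta$ vanishes and $\kappa_1$ is surjective on $\widetilde{V}$, which is exactly the content of the corollary: every $f(z,\zeta) \in \Q(\widetilde{V})$ admits a preimage $f'(z,\zeta) \in \mathfrak{S}(\widetilde{V})$.

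Conceptually, the argument says that on the special cones $\widetilde{V}$ singled out by condition C, the sheafification $\Q$ of the presheaf $V \mapsto \mathfrak{S}(V)/\mathfrak{N}(V)$ agrees with the honest quotient of sections; thus a class in $\Q(\widetilde{V})$ can always be realized by a single globally defined symbol rather than by a cocycle of locally defined symbols. The only real work has already been carried out in the proof of Theorem \ref{vanish}, where the soft resolution $\widetilde{\mathscr{L}}^{(0,\bullet)}_{2,loc}$ of $\mathfrak{N}$, together with H\"ormander's $L^2$ $\bp$-estimate on the convex Stein wedge $W_2 = V_2 \cap H_{\zeta_0}(\delta)$, was used to solve $\bp u = f$ while preserving exponential decay and thereby kill higher cohomology. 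Once that vanishing is granted, the corollary is a one-line consequence of pure homological algebra and requires no further analytic input, so I anticipate no additional obstacle beyond carefully citing the exact sequence and Theorem \ref{vanish}.
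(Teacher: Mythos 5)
Your proof is correct and coincides with the paper's own argument: the paper likewise deduces the surjectivity of $\mathfrak{S}(\widetilde{V})\rightarrow\Q(\widetilde{V})$ from the long exact sequence attached to \eqref{SNseq} together with the vanishing $H^1(\widetilde{V};\mathfrak{N})=0$ supplied by Theorem \ref{vanish}. No further comment is needed.
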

The claim follows immediately from the exactness of the sequence
\[
0\rightarrow \mathfrak{N}(\widetilde{V}) \rightarrow \mathfrak{S}(\widetilde{V}) \rightarrow \Q(\widetilde{V}) \rightarrow 0.
\]

\subsection{The sheaf $\mathfrak{S}^\infty/\mathfrak{N}^\infty$ of symbols of $C^\infty$-type}

In this subsection we introduce new symbol class, which is called symbols of $C^\infty$-type.
Let $V$ be an open cone in $\ring{T}^*X$ and $z^*=(z;\zeta)$ a local coordinate of $T^*X$.
We construct conic sheaves $\mathfrak{S}^\infty$ and $\mathfrak{N}^\infty$ on $\ring{T}^*X$.
\begin{defi}
One defines the sheaf $C^\infty_z\mathscr{O}_\zeta$ as follows.
\[
\mbox{A function $f(z,\zeta)\in C^\infty_z\mathscr{O}_\zeta(V)$} \quad \Longleftrightarrow
\begin{array}{l}
 \mbox{A function }f(z,\zeta)\mbox{ is a $C^\infty$-function on $V$}\\
 \mbox{and a holomorphic on $V$ in the second variable.}
\end{array}
\]
\end{defi}

\begin{rema}
The sheaf $C^\infty_z\mathscr{O}_\zeta$ is invariant under the coordinate transformation of $X$.

\end{rema}

\begin{defi}\label{symbolinf}
\begin{enumerate}
\item A function $f(z,\zeta)$ is said to be a null-symbol of $C^\infty$-type on $V$ if it satisfies the following conditions.\par
\begin{enumerate}
\renewcommand{\labelenumii}{$N\arabic{enumii}$.}
\item There exists an infinitesimal wedge $W$ of type $V$ such that
\[
f(z,\zeta)\in {C^\infty_z\mathscr{O}_\zeta}(W).
\]
\item For any open cone $V'\cSubset V$ there exist an infinitesimal wedge $W'\subset W$ of type $V'$ such that the following condition holds: For any multi-indices $\alpha=(\alpha_1,\alpha_2,\dots,\alpha_n)\in \mathbb{Z}^n_{\geq 0}$ and $\beta=(\beta_1,\beta_2,\dots,\beta_n)\in \mathbb{Z}^n_{\geq 0}$, there exists constants $h>0$ and $C>0$ such that
\[
\left|\frac{\partial^\alpha}{\partial z^\alpha} \frac{\partial^\beta}{\partial \bar{z}^\beta} f(z,\zeta)\right|\leq C\cdot e^{-h|\zeta|}\ \mbox{ on } W'.
\]

\end{enumerate}
\item A function $f(z,\zeta)$ is said to be a symbol of $C^\infty$-type on $V$ if it satisfies the following conditions.\par
\begin{enumerate}
\renewcommand{\labelenumii}{$S\arabic{enumii}$.}
\item There exists an infinitesimal wedge $W$ of type $V$ such that
\[
f(z,\zeta)\in {C^\infty_z\mathscr{O}_\zeta}(W).
\]
\item For any open cone $V'\cSubset V$ there exists an infinitesimal wedge $W'\subset W$ of type $V'$ such that the following condition holds:
For any multi-indices $\alpha=(\alpha_1,\alpha_2,\dots,\alpha_n)\in \mathbb{Z}^n_{\geq 0}$ and $\beta=(\beta_1,\beta_2,\dots,\beta_n)\in \mathbb{Z}^n_{\geq 0}$, and for any $h>0$ there exists a constant $C>0$ such that
\[
\left|\frac{\partial^\alpha}{\partial z^\alpha} \frac{\partial^\beta}{\partial \bar{z}^\beta} f(z,\zeta)\right|\leq C\cdot e^{h|\zeta|}\ \mbox{ on } W'.
\]
\item The derivative $\displaystyle \frac{\partial}{\partial \bar{z}_i} f(z,\zeta)$ is a null-symbol on $V$ for any $i=1,2,\dots,n$.
\end{enumerate}

\item We denote by $\mathfrak{S}^\infty(V)$ and $\mathfrak{N}^\infty(V)$ the set of all the symbols of $C^\infty$-type on $V$ and the set of all the null-symbols of $C^\infty$-type on $V$, respectively.
Moreover we set
\begin{alignat*}{1}
\mathfrak{S}^\infty_{z^*}&= \ilim[V\ni z^*]\mathfrak{S}^\infty(V), \\
\mathfrak{N}^\infty_{z^*}&= \ilim[V\ni z^*]\mathfrak{N}^\infty(V),
\end{alignat*}
where $V$ runs through the family of open conic neighborhoods of $z^*$.
\end{enumerate}
\end{defi}
As we showed in Subsection 3.1, we can extend the sheaf $\mathfrak{S}^\infty$ and $\mathfrak{N}^\infty$ to the sheaves on $T^*X$.
Set $\mathfrak{S}^\infty|_{T^*_XX}=\mathfrak{S}|_{T^*_XX}$ and $\mathfrak{N}^\infty|_{T^*_XX}=0$.
Then the sheaves $\mathfrak{S}^\infty$ and $\mathfrak{N}^\infty$ are well-defined on $T^*X$.

\begin{prop}
Let $V$ be an open cone in $\ring{T}^*X$.
Then $\mathfrak{N}^\infty(V)$ is an ideal of $\mathfrak{S}^\infty(V)$.
\end{prop}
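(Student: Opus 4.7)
The plan is to check the two defining conditions $N1$ and $N2$ for the product $fg$, where $f\in\mathfrak{N}^\infty(V)$ and $g\in\mathfrak{S}^\infty(V)$. Condition $N1$ is immediate: by hypothesis there exist infinitesimal wedges $W_f$ and $W_g$ of type $V$ on which $f$ and $g$ respectively lie in $C^\infty_z\mathscr{O}_\zeta$, and their intersection $W=W_f\cap W_g$ is again an infinitesimal wedge of type $V$ (for any $K\Subset V$, if $K_{\delta_f}\subset W_f$ and $K_{\delta_g}\subset W_g$, then $K_\delta\subset W$ with $\delta=\max(\delta_f,\delta_g)$). Since $C^\infty_z\mathscr{O}_\zeta$ is a sheaf of rings, $fg\in C^\infty_z\mathscr{O}_\zeta(W)$.

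For condition $N2$, fix any open cone $V'\cSubset V$ and any multi-indices $\alpha,\beta\in\mathbb{Z}^n_{\geq 0}$. By the Leibniz rule,
\[
\frac{\partial^\alpha}{\partial z^\alpha}\frac{\partial^\beta}{\partial\bar z^\beta}(fg)
=\sum_{\alpha'\leq\alpha,\,\beta'\leq\beta}\binom{\alpha}{\alpha'}\binom{\beta}{\beta'}\left(\frac{\partial^{\alpha'}}{\partial z^{\alpha'}}\frac{\partial^{\beta'}}{\partial\bar z^{\beta'}}f\right)\left(\frac{\partial^{\alpha-\alpha'}}{\partial z^{\alpha-\alpha'}}\frac{\partial^{\beta-\beta'}}{\partial\bar z^{\beta-\beta'}}g\right).
\]
This is a finite sum. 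For each pair $(\alpha',\beta')$, the null-symbol estimate applied to $f$ on $V'$ yields an infinitesimal wedge $W'_{f,\alpha',\beta'}$ of type $V'$ and constants $h_{\alpha',\beta'}>0$, $C_{\alpha',\beta'}>0$ with
\[
\left|\frac{\partial^{\alpha'}}{\partial z^{\alpha'}}\frac{\partial^{\beta'}}{\partial\bar z^{\beta'}}f(z,\zeta)\right|\leq C_{\alpha',\beta'}\cdot e^{-h_{\alpha',\beta'}|\zeta|}.
\]
Set $h=\tfrac{1}{2}\min_{\alpha'\leq\alpha,\beta'\leq\beta}h_{\alpha',\beta'}>0$. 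The symbol estimate applied to $g$ on $V'$ with this value of $h$ (in the role of the free constant in condition $S2$) provides an infinitesimal wedge $W'_{g,\alpha',\beta'}$ of type $V'$ and a constant $C'_{\alpha',\beta'}>0$ such that
\[
\left|\frac{\partial^{\alpha-\alpha'}}{\partial z^{\alpha-\alpha'}}\frac{\partial^{\beta-\beta'}}{\partial\bar z^{\beta-\beta'}}g(z,\zeta)\right|\leq C'_{\alpha',\beta'}\cdot e^{h|\zeta|}.
\]
Letting $W'$ be the intersection of all the (finitely many) wedges $W'_{f,\alpha',\beta'}$ and $W'_{g,\alpha',\beta'}$, which is again an infinitesimal wedge of type $V'$ by the same observation as in the first paragraph, each term in the Leibniz sum is bounded on $W'$ by a constant times $e^{-h|\zeta|}$ (since $-h_{\alpha',\beta'}+h\leq -h$). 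Summing the finitely many terms and adjusting the constant proves the estimate required by $N2$ with the single exponential rate $h$.

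The argument is essentially a bookkeeping exercise, so the only potentially subtle point is verifying that a finite intersection of infinitesimal wedges of a given type remains an infinitesimal wedge of the same type; this is handled once and for all by the $\max\delta$ observation above. No use of condition $S3$ on $g$ is needed, since we only need to land in $\mathfrak{N}^\infty$, whose definition involves $N1$ and $N2$ alone.
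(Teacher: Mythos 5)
Your proof is correct and follows essentially the same route as the paper: apply the Leibniz rule, bound each term by pairing the exponential decay of the derivatives of $f$ with the arbitrarily slow exponential growth of the derivatives of $g$ (choosing the growth rate to be half the minimal decay rate), and sum the finitely many terms. Your treatment is in fact slightly more careful than the paper's, which omits the binomial coefficients and inadvertently swaps the roles of $f$ and $g$ in its displayed estimates, and you rightly note that a finite intersection of infinitesimal wedges of a given type is again one and that condition $S3$ is not needed.
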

\begin{proof}
Let $f(z,\zeta)\in\mathfrak{N}^\infty(V)$ and $g(z,\zeta)\in\mathfrak{S}^\infty(V)$.
Then we can take an infinitesimal wedge $W$ of type $V$ such that $f(z,\zeta)$ and $g(z,\zeta)$ are in $C^\infty_z\mathscr{O}_\zeta(W)$.
Then we have
\[
\frac{\partial^\alpha}{\partial z^\alpha} \frac{\partial^\beta}{\partial \bar{z}^\beta} (f(z,\zeta)\cdot g(z,\zeta))=\sum_{\substack{{0 \leq \alpha' \leq \alpha} \\ {0 \leq \beta' \leq \beta}}} \frac{\partial^{\alpha'}}{\partial z^{\alpha'}} \frac{\partial^{\beta'}}{\partial \bar{z}^{\beta'}} f(z,\zeta) \cdot \frac{\partial^{\alpha-\alpha'}}{\partial z^{\alpha-\alpha'}} \frac{\partial^{\beta-\beta'}}{\partial \bar{z}^{\beta-\beta'}} g(z,\zeta).
\]
By the assumption for any $V'\cSubset V$ there exists an infinitesimal wedge $W'\subset W$ of type $V'$ such that $f(z,\zeta)$ satisfies the following condition:
For any $h_{\alpha'\beta'}>0$ there exists a positive constant $C_{\alpha'\beta'}$ such that
\[
\left|\frac{\partial^{\alpha'}}{\partial z^{\alpha'}} \frac{\partial^{\beta'}}{\partial \bar{z}^{\beta'}} f(z,\zeta)\right|\leq C_{\alpha'\beta'}\cdot e^{h_{\alpha'\beta'}|\zeta|}.
\]
Similarly for the same $V'\cSubset V$ and the same $W'\subset W$ there exist constants $h'_{\alpha'\beta'}>0$ and $C'_{\alpha'\beta'}>0$ such that
\[
\left|\frac{\partial^{\alpha-\alpha'}}{\partial z^{\alpha-\alpha'}} \frac{\partial^{\beta-\beta'}}{\partial \bar{z}^{\beta-\beta'}} g(z,\zeta)\right|\leq C'_{\alpha'\beta'}\cdot e^{-h'_{\alpha'\beta'}|\zeta|}.
\]
Thus by taking $\displaystyle h_{\alpha'\beta'}=\frac{1}{2}h'_{\alpha'\beta'}$ we obtain
\begin{alignat*}{1}
\left| \frac{\partial^\alpha}{\partial z^\alpha} \frac{\partial^\beta}{\partial \bar{z}^\beta} (f(z,\zeta)\cdot g(z,\zeta)) \right| &\leq \sum_{\substack{{0 \leq \alpha' \leq \alpha} \\ {0 \leq \beta' \leq \beta}}} \left| \frac{\partial^{\alpha'}}{\partial z^{\alpha'}} \frac{\partial^{\beta'}}{\partial \bar{z}^{\beta'}} f(z,\zeta)\right| \cdot \left| \frac{\partial^{\alpha-\alpha'}}{\partial z^{\alpha-\alpha'}} \frac{\partial^{\beta-\beta'}}{\partial \bar{z}^{\beta-\beta'}} g(z,\zeta) \right|\\
& \leq \sum_{\substack{{0 \leq \alpha' \leq \alpha} \\ {0 \leq \beta' \leq \beta}}} C_{\alpha'\beta'} C'_{\alpha'\beta'} \cdot e^{\frac{1}{2}h'_{\alpha'\beta'}|\zeta|}\cdot e^{-h'_{\alpha'\beta'}|\zeta|} \leq mCe^{-\frac{1}{2}h|\zeta|},
\end{alignat*}
where $\displaystyle h=\underset{\substack{{0 \leq \alpha' \leq \alpha} \\ {0 \leq \beta' \leq \beta}}}{\min} \{ h'_{\alpha'\beta'} \}$, $C=\underset{\substack{{0 \leq \alpha' \leq \alpha} \\ {0 \leq \beta' \leq \beta}}}{\max}\{C_{\alpha'\beta'} C'_{\alpha'\beta'}\}$ and $m = \#\{ (\alpha',\beta') \mid 0\leq \alpha' \leq \alpha, 0\leq \beta' \leq \beta \}$.
This completes the proof.
\end{proof}

One denotes by $\widehat{\Qinf}$ the presheaf defined by the correspondence
\[
V\mapsto \mathfrak{S}^\infty(V)/\mathfrak{N}^\infty(V),
\]
where $V$ is an open cone in $\ring{T}^*X$, and let $\mathfrak{S}^\infty/\mathfrak{N}^\infty$ be an associated sheaf to $\widehat{\Qinf}$.
We have the following exact sequence of sheaves.
\begin{equation}
0\longrightarrow \mathfrak{N}^\infty\longrightarrow \mathfrak{S}^\infty\overset{\kappa_2}{\longrightarrow} \mathfrak{S}^\infty/\mathfrak{N}^\infty\longrightarrow 0.
\end{equation}
Here $\kappa_2$ is the composition of the canonical morphisms $\mathfrak{S}^\infty\rightarrow \widehat{\mathfrak{S}^\infty/\mathfrak{N}^\infty}\rightarrow \Qinf$.
By the same argument in the previous subsection we want the exactness of the sequence on $\widetilde{V}$
\[
0\rightarrow \mathfrak{N}^\infty(\widetilde{V}) \rightarrow \mathfrak{S}^\infty(\widetilde{V}) \rightarrow \Qinf(\widetilde{V}) \rightarrow 0,
\]
where $\widetilde{V}$ satisfies the condition C, and this exactness is guaranteed by the following argument:\par
We have the commutative diagram
\[
\xymatrix{
0 \ar[r] & \mathfrak{N}(\widetilde{V}) \ar[r] \ar[d]^{\iota_2(\widetilde{V})} & \mathfrak{S}(\widetilde{V}) \ar[r]^(.4){\kappa_1(\widetilde{V})} \ar[d]^{\iota_1(\widetilde{V})} & \Q(\widetilde{V}) \ar[r] \ar[d]^{\iota(\widetilde{V})} & 0 \\
0 \ar[r] &\mathfrak{N}^\infty(\widetilde{V}) \ar[r] & \mathfrak{S}^\infty(\widetilde{V}) \ar[r]_(.4){\kappa_2(\widetilde{V})} & \Qinf(\widetilde{V}) &,
}
\]
where $\iota_1(\widetilde{V})$ and $\iota_2(\widetilde{V})$ are canonical inclusions and horizontal sequences are exact.
Assuming $\iota(\widetilde{V})$ to be an isomorphism,
the surjectivity of $\kappa_2(\widetilde{V})$ follows from the fact that the composition $\iota(\widetilde{V})\circ \kappa_1(\widetilde{V})$ is a surjective.\par

It shall be proved in the next subsection that $\iota$ is an isomorphism between $\Q$ and $\Qinf$.

\begin{coro}
Let $\widetilde{V}$ be a closed cone satisfying the condition C.
An arbitrary element $f(z,\zeta) \in\Qinf(\widetilde{V})$ is represented by some symbol $f'(z,\zeta)\in\mathfrak{S}^\infty(\widetilde{V})$.
\end{coro}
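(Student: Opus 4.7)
The plan is to read off this corollary from the surjectivity of $\kappa_2(\widetilde{V})$, which is precisely the output of the diagram chase carried out in the paragraph immediately preceding the statement. I would first apply Theorem \ref{vanish}: since $\widetilde{V}$ satisfies condition C we have $H^1(\widetilde{V};\mathfrak{N})=0$, so the long exact sequence associated with \eqref{SNseq} truncates to a short exact sequence
\[
0\to \mathfrak{N}(\widetilde{V}) \to \mathfrak{S}(\widetilde{V}) \xrightarrow{\kappa_1(\widetilde{V})} \Q(\widetilde{V}) \to 0,
\]
and hence $\kappa_1(\widetilde{V})$ is surjective (this is just the classical analogue of the present corollary, already recorded in Subsection 3.1).

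Next, granting the isomorphism $\iota(\widetilde{V}):\Q(\widetilde{V})\xrightarrow{\sim}\Qinf(\widetilde{V})$ promised in the next subsection, the composite $\iota(\widetilde{V})\circ\kappa_1(\widetilde{V})$ is a surjection $\mathfrak{S}(\widetilde{V}) \twoheadrightarrow \Qinf(\widetilde{V})$. By commutativity of the displayed diagram this composite factors as $\kappa_2(\widetilde{V})\circ\iota_1(\widetilde{V})$, which forces $\kappa_2(\widetilde{V})$ itself to be surjective. Concretely, for a given $f\in\Qinf(\widetilde{V})$ I would pick a classical symbol $g\in\mathfrak{S}(\widetilde{V})$ with $\iota(\widetilde{V})(\kappa_1(\widetilde{V})(g))=f$ and take $f':=\iota_1(\widetilde{V})(g)\in\mathfrak{S}^\infty(\widetilde{V})$ as the desired representative; by the commutativity of the square, $\kappa_2(\widetilde{V})(f')=f$.

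No new analytic estimates are required at this stage: the corollary is a purely formal consequence of the classical surjectivity (underwritten by the $\bar{\partial}$-vanishing result for $\mathfrak{N}$ via the soft resolution $\tilde{\mathscr{L}}^{(0,\bullet)}_{2,loc}$) and the existence of the natural inclusions $\iota_1,\iota_2$ of holomorphic symbols into $C^\infty$-type symbols. Accordingly, the only real obstacle is external to the corollary, namely establishing that $\iota:\Q\to\Qinf$ is an isomorphism; once that is done in the next subsection, this corollary follows immediately from the diagram chase sketched above.
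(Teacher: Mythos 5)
Your proposal is correct and is essentially identical to the paper's own argument: the paper deduces the corollary from the surjectivity of $\kappa_2(\widetilde{V})$, which it obtains from the commutative diagram by combining the surjectivity of $\kappa_1(\widetilde{V})$ (a consequence of $H^1(\widetilde{V};\mathfrak{N})=0$ from Theorem \ref{vanish}) with the isomorphism $\iota(\widetilde{V})$ established in the following subsection. You have correctly identified that no new analytic input is needed and that the only substantive ingredient deferred is Theorem \ref{iso}.
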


\subsection{The equivalence of two symbol classes}

In this subsection we may prove the equivalence of $\Q$ and $\Qinf$.\par
By the definitions of classical symbols and symbols of $C^\infty$-type, there exist canonical inclusions
\[
\iota_1:\mathfrak{S} \hookrightarrow \mathfrak{S}^\infty,\ \ \ \iota_2:\mathfrak{N} \hookrightarrow \mathfrak{N}^\infty,
\]
which induce the morphism
\[
\iota:\mathfrak{S}/\mathfrak{N}\longrightarrow\mathfrak{S}^\infty/\mathfrak{N}^\infty.
\]

\begin{theo}{\label{iso}}
The induced morphism
\[
\iota:\mathfrak{S}/\mathfrak{N} \longrightarrow \mathfrak{S}^\infty/\mathfrak{N}^\infty
\]
is an isomorphism of sheaves.
\end{theo}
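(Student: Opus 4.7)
The plan is to verify that $\iota$ is both injective and surjective on stalks. Injectivity is immediate from the definitions: if $f \in \mathfrak{S}_{z^*}$ represents an element of $\ker \iota$, then on some infinitesimal wedge $W$ it is holomorphic and satisfies the $(\alpha,\beta)=(0,0)$ instance of condition $N2$, namely $|f(z,\zeta)| \leq C e^{-h|\zeta|}$, which is exactly the defining estimate of $\mathfrak{N}$. Hence $f$ represents zero in $\Q_{z^*}$.

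For surjectivity, given a germ of $\Qinf$ at $z^*$ represented by $f \in \mathfrak{S}^\infty(V)$, the task is to find $g \in \mathfrak{S}(V')$ and $u \in \mathfrak{N}^\infty(V')$ on some smaller open conic neighbourhood $V' \subset V$ of $z^*$ with $f = g + u$. Setting $u := f - g$ and demanding $g$ to be holomorphic in $z$ forces $\bp_z u = \bp_z f$. Since $f$ is holomorphic in $\zeta$ by condition $S1$, we have $\bp f = \bp_z f$ as a $(0,1)$-form on $T^*X \simeq X \times \mathbb{C}^n_\zeta$, so the problem becomes to solve $\bp u = \bp f$ with $u \in C^\infty_z\mathscr{O}_\zeta$ satisfying null-symbol estimates; the holomorphy in $\zeta$ is automatic, since the vanishing of the $d\bar\zeta_j$-components of $\bp f$ forces $\partial u/\partial \bar\zeta_j = 0$ for any solution.

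The $\bp$-equation itself is solved by the machinery used in the proof of Lemma \ref{Nsoft}. Shrink to a Stein infinitesimal wedge $W_2$ of type $V' \cSubset V$ inside a half-space $H_{\zeta_0}(\delta)$ as in Theorem \ref{vanish}, and set $F := e^{h'\langle \zeta_0,\zeta\rangle}\,\bp f$ for a small $h'>0$. Condition $S3$ asserts that each $\partial f/\partial \bar z_i$ is a null-symbol, so the coefficients of $\bp f$ decay like $C e^{-h|\zeta|}$ for every $h>0$; choosing $h' < h$ makes $F$ square integrable on $W_2$, while $\bp F = 0$ because $e^{h'\langle\zeta_0,\zeta\rangle}$ is holomorphic in $\zeta$. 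Theorem \ref{Hor} with the plurisubharmonic weight $\varphi = -\log d(\cdot,\partial W_2)$ then yields $v \in L^2_{loc}(W_2)$ with $\bp v = F$, and $u := e^{-h'\langle \zeta_0,\zeta\rangle} v$ solves $\bp u = \bp f$ with an exponentially decaying $L^2$ bound.

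The main obstacle is promoting this $L^2$ decay to pointwise decay of every derivative $\partial^\alpha_z \partial^\beta_{\bar z} u$. For $|\beta| \geq 1$ this is automatic once $g := f - u$ is known to be holomorphic in $z$: then $\partial^\beta_{\bar z} u = \partial^\beta_{\bar z} f$, and the required decay follows from $S3$ combined with $N2$ applied to the null-symbols $\partial f/\partial \bar z_i$. The case $|\beta| = 0$ uses interior elliptic regularity for the scalar equation $\bp^*\bp u = \bp^*\bp f$, whose right-hand side is smooth with exponentially decaying derivatives of all orders; standard Sobolev estimates on slightly shrunk subwedges, together with conic homogeneity in $\zeta$ to obtain uniformity as $|\zeta|\to\infty$, yield $|\partial^\alpha_z u| \leq C_\alpha e^{-h|\zeta|}$. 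Consequently $u \in \mathfrak{N}^\infty(V')$, and $g = f - u$ is holomorphic in both $z$ and $\zeta$ with $|g| \leq |f| + |u| \leq C e^{h|\zeta|}$ for every $h>0$, so $g \in \mathfrak{S}(V')$, completing the proof.
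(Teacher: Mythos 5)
Your overall strategy --- reduce to stalks, get injectivity from the $(\alpha,\beta)=(0,0)$ instance of $N2$, and get surjectivity by writing $f=g+u$ with $g$ holomorphic and $u$ a $C^\infty$-type null-symbol obtained by solving $\bp u=\bp f$ --- is exactly the paper's, and it is correct; the difference lies entirely in how the $\bp$-equation is solved with null-symbol bounds. The paper proves a dedicated Dolbeault--Grothendieck lemma (Proposition \ref{exist}): it works only in the $z$-variables, treats $\zeta$ as a holomorphic parameter, and solves $\partial G/\partial\bar z_k=g$ by the explicit cut-off Cauchy transform; since that formula is a convolution against a fixed integrable kernel, every derivative $\partial^\alpha_z\partial^\beta_{\bar z}$ passes directly onto the exponentially decaying data, so the pointwise $\mathfrak{N}^\infty$ estimates come for free and the solution exists on the whole cone $D'\times\Gamma$. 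You instead recycle the H\"ormander $L^2$ machinery from the proof of Theorem \ref{vanish} and must then upgrade a weighted $L^2$ bound to pointwise exponential decay of all derivatives. Your reduction of the $|\beta|\geq 1$ derivatives to $S3$ via $\partial u/\partial\bar z_i=\partial f/\partial\bar z_i$ is clean and correct; the $|\beta|=0$ case via interior elliptic estimates is the genuinely delicate part and is only sketched: one needs (i) uniformity of the elliptic constants on balls of fixed radius as $|\zeta|\to\infty$ inside a conically shrunk subwedge, (ii) exponential decay of the full Sobolev norms of $\bp^*\bp f$, which involves $\zeta$-derivatives of $\partial f/\partial\bar z_i$ not covered by $N2$ but recoverable from Cauchy estimates in $\zeta$, and (iii) the passage from the weight $e^{-\varphi}(1+|\cdot|^2)^{-2}$ in \eqref{ineq1} to an exponentially decaying $L^2$ bound on each ball. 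All of this is standard, but it is precisely the work Proposition \ref{exist} is designed to avoid, which is why the paper's route is shorter. One small slip: null-symbols satisfy $|\cdot|\leq Ce^{-h|\zeta|}$ for \emph{some} $h>0$, not for every $h>0$ as you state when estimating $\bp f$; your subsequent choice $h'<h$ shows you are in fact using the correct quantifier.
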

Obviously we have $\Q|_{T^*_XX}=\Qinf|_{T^*_XX}=\mathscr{D}^\infty_X$.
Hence Theorem \ref{iso} holds on the zero section $T^*_XX$ and it is sufficient to prove Theorem \ref{iso} on $\ring{T}^*X$.\par
Since we have already obtained the map
\[
\iota:\Q \longrightarrow \Qinf,
\]
one shows $\iota_{z^*}$ to be an isomorphism of stalks at $z^*\in\ring{T}^*X$.\par
For this purpose we prepare the following proposition.
As the problem is local, we may assume that $T^*X\simeq \mathbb{C}^n_z\times \mathbb{C}^n_\zeta$ until the end of this subsection.
In addition we can take $z^*=z^*_0=(0\,;1,0,\dots,0)$ without loss of generality.
Let $D=D_1(r_1,0)\times D_2(r_2,0)\times \dots \times D_n(r_n,0)$ be a polydisc in $\mathbb{C}^n_z$ where $D_i(r_i,0)$ is an open disc in $\mathbb{C}$ whose radius is $r_i$ and the center is at the origin.
Set
\[
V=D\times \Gamma,
\]
where $\Gamma$ is an open convex cone containing $(1,0,\dots,0)\in\mathbb{C}^n_\zeta$. \par
We denote by $\mathfrak{N}^{\infty,(p,q)}$ the sheaf of $(p,q)$-forms on $T^*X$ with coefficients in $\mathfrak{N}^\infty$.

\begin{prop}\label{exist}
Let $V=D\times \Gamma$ be an open set defined above and let $f\in\mathfrak{N}^{\infty,(p,q)}(V)$ satisfy $\bp_z f=0$.
For any polydisc $D'\Subset D$ we can find $u\in\mathfrak{N}^{\infty,(p,q-1)}(V')$ with $V'=D'\times \Gamma$ such that $\bp_z u=f$ on $V'$.
\end{prop}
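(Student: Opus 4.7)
The plan is to solve $\bar\partial_z u = f$ on $V' = D' \times \Gamma$ by an explicit iterated Cauchy--Pompeiu integral in the $z$-variables, and then to verify that this standard Dolbeault--Grothendieck construction preserves both the holomorphy in $\zeta$ and the rapid exponential decay of all $z$-derivatives required for membership in $\mathfrak{N}^\infty$. Since $\bar\partial_z$ acts trivially on $dz^I$, one reduces immediately to the case $p=0$.

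Fix polydiscs $D'\Subset D''\Subset D$ and a cutoff $\chi\in C^\infty_c(D)$ with $\chi\equiv 1$ on a neighborhood of $\overline{D''}$. For each $k=1,\dots,n$ introduce the one-variable Cauchy--Pompeiu operator
\[
(T_k g)(z,\zeta)=\frac{1}{2\pi i}\int_{\mathbb{C}}\frac{\chi(z_1,\dots,w_k,\dots,z_n)\,g(z_1,\dots,w_k,\dots,z_n,\zeta)}{w_k-z_k}\,dw_k\wedge d\bar w_k.
\]
Writing $f=\sum_{|J|=q}f_J\,d\bar z^J$ and applying $T_k$ inductively in $k=n,n-1,\dots,1$ to strip off the $d\bar z_k$ components modulo $\bar\partial_z$-exact terms, the classical Dolbeault--Grothendieck argument produces after $q$ steps a $(0,q-1)$-form $u$ with $\bar\partial_z u=f$ on $V'$.

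The key remaining task is to check that each $T_k$ preserves $\mathfrak{N}^\infty$. Holomorphy in $\zeta$ is automatic because the kernel is independent of $\zeta$, so $\partial/\partial\bar\zeta$ passes under the integral and annihilates $g$. After the change of variables $w_k=z_k+t_k$, every derivative $\partial_z^\alpha\partial_{\bar z}^\beta$ falls onto the product $\chi\cdot g$ inside the integral, while the locally $L^1$ kernel $1/t_k$ is left untouched. Given $V'''\cSubset V'$, one engulfs its closure by some $V''=D''\times\Gamma''$ with $\Gamma''\cSubset\Gamma$ and $V''\cSubset V$; the decay hypothesis for $f$ on an infinitesimal wedge of type $V''$ then yields the bound $|\partial_z^\alpha\partial_{\bar z}^\beta T_k f|\leq C'e^{-h|\zeta|}$ on a suitable infinitesimal wedge of type $V'''$, with the new constant controlled by $\|\chi\|_{C^{|\alpha|+|\beta|}}$ and an $L^1$ norm of the kernel on $\mathrm{supp}(\chi)$.

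The main technical point is the bookkeeping of the infinitesimal wedges: one must verify that whenever $(z,\zeta)$ ranges over the wedge of type $V'''$ on which we seek the decay of $u$, the slice point $(z_1,\dots,w_k,\dots,z_n,\zeta)$ appearing in the integrand lies in a wedge of type $V''$ on which $f$ has its estimate. This is possible because the wedge condition constrains only the $\zeta$-direction, forcing $|\zeta|$ large uniformly over the relevant compact $z$-sets, while $w_k$ stays confined to $\mathrm{supp}(\chi)\subset D$; choosing the wedge for $u$ with $|\zeta|$ sufficiently large guarantees this inclusion. Once the set-up is in place, everything else is routine differentiation under the integral sign.
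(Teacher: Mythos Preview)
Your proposal is correct and follows essentially the same approach as the paper: both arguments run the standard Dolbeault--Grothendieck induction, solving $\partial G/\partial\bar z_k=g$ by the one-variable Cauchy--Pompeiu formula with a compactly supported cutoff in $z_k$, then differentiating under the integral (after the translation $w_k=z_k+t_k$) to propagate the $\mathfrak{N}^\infty$ decay estimates to all $z$-derivatives. Your explicit discussion of the infinitesimal-wedge bookkeeping is a useful addition that the paper leaves largely implicit.
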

\begin{proof}
By the induction with respect to $k$, we prove that the lemma is true if $f$ does not contain $d\bar{z}_{k+1},\dots,d\bar{z}_n$.
If $k=0$, it is obvious that $f=0$.
Assuming that it has been proved when $k$ is replaced by $k-1$, we write
\begin{alignat*}{1}
f &= d\bar{z}_k\wedge g + h,\\
\displaystyle g &= \psum[|I|=p]\psum[|J|=q]g_{IJ}dz^I\wedge d\bar{z}^J.
\end{alignat*}
Here $g$ is a sum of $(p,q)$-forms on $V$ with coefficients in $C^\infty_z\mathscr{O}_\zeta$ and $h$ is a sum of $(p,q+1)$-forms on $V$ with coefficients in $C^\infty_z\mathscr{O}_\zeta$.
Moreover $g$ and $h$ do not contain $d\bar{z}_k,\dots,d\bar{z}_n$ and $\sum'$ means that we sum only over increasing multi-indices.
Since $\bar{\partial}f=0$ holds, we have
\[
\frac{\partial g_{IJ}}{\partial \bar{z}_j}=0
\]
for $j>k$ such that $g_{IJ}$ is analytic in these variables.\\
We want to construct the solution $G_{IJ}$ of the equation
\[
\frac{\partial G_{IJ}}{\partial \bar{z}_k}=g_{IJ}.
\]
For this purpose we fix a $C^\infty$-function $\psi$ on $D_k(r_k,0)$ with compact support such that $\psi(z_k)=1$ on a neighborhood $D''\subset D$ of $\overline{D'}$, and set
\begin{alignat*}{1}
G_{IJ} &= \frac{1}{(2\pi\sqrt{-1})} \iint \frac{1}{(\tau-z_k)} \psi(\tau)g_{IJ}(z_1,\dots,z_{k-1},\tau,z_{k+1},\dots,z_n)d\tau\wedge d\bar{\tau}\\
&= -\frac{1}{(2\pi\sqrt{-1})}\iint \frac{1}{\tau} \psi(z_k-\tau)g_{IJ}(z_1,\dots,z_{k-1},z_k-\tau,z_{k+1},\dots,z_n)d\tau\wedge d\bar{\tau}.
\end{alignat*}
The last integral representation shows that $G_{IJ}$ is a $C^\infty_z\mathscr{O}_\zeta$ function, and thus we just confirm that $G_{IJ}$ satisfies the condition $N2.$ in Definition \ref{symbolinf}.\par

Let $\alpha=(\alpha_1,\alpha_2,\dots,\alpha_n)\in \mathbb{Z}^n_{\geq 0}$ and $\beta=(\beta_1,\beta_2,\dots,\beta_n)\in \mathbb{Z}^n_{\geq 0}$ be multi-indices.
Hereafter $g_{IJ}(z_1,\dots,z_{k-1},z_k-\tau,z_{k+1},\dots,z_n)$ is also denoted by $g_{IJ}(z_k-\tau)$ for short.
Then we have
\begin{alignat*}{1}
\left| \frac{\partial^\alpha}{\partial z^\alpha} \frac{\partial^\beta}{\partial \bar{z}^\beta} G_{IJ} \right| &= \frac{1}{2\pi} \left| \iint \frac{1}{\tau} \cdot \frac{\partial^\alpha}{\partial z^\alpha} \frac{\partial^\beta}{\partial \bar{z}^\beta} (\psi(z_k-\tau)g_{IJ}(z_k-\tau)) d\tau\wedge d\bar{\tau} \right| \\
&= \frac{1}{2\pi} \left| \iint \frac{1}{\tau} \cdot \frac{\partial^\alpha}{\partial \tau^\alpha} \frac{\partial^\beta}{\partial \bar{\tau}^\beta} (\psi(z_k-\tau)g_{IJ}(z_k-\tau)) d\tau\wedge d\bar{\tau} \right|.
\end{alignat*}
We can calculate the integrand as follows.
\begin{alignat*}{1}
& \frac{1}{\tau} \frac{\partial^\alpha}{\partial \tau^\alpha} \frac{\partial^\beta}{\partial \bar{\tau}^\beta} (\psi(z_k-\tau)g_{IJ}(z_k-\tau)) \\
= & \sum_{0\leq \alpha' \leq \alpha} \sum_{0 \leq \beta' \leq \beta} \frac{1}{\tau} \frac{\partial^{\alpha'}}{\partial \tau^{\alpha'}} \frac{\partial^{\beta'}}{\partial \bar{\tau}^{\beta'}} \psi(z_k-\tau) \cdot \frac{\partial^{\alpha-\alpha'}}{\partial \tau^{\alpha-\alpha'}} \frac{\partial^{\beta-\beta'}}{\partial \bar{z}^{\beta-\beta'}} g_{IJ}(z_k-\tau).
\end{alignat*}
Since $\psi(z_k-\tau)$ has a compact support and $g_{IJ}$ is of $\mathfrak{N}^\infty$-type,
\[
\left| \iint \tau^{-1} \cdot \frac{\partial^{\alpha'}}{\partial \tau^{\alpha'}} \frac{\partial^{\beta'}}{\partial \bar{\tau}^{\beta'}} \psi(z_k-\tau) \cdot \frac{\partial^{\alpha-\alpha'}}{\partial \tau^{\alpha-\alpha'}} \frac{\partial^{\beta-\beta'}}{\partial \bar{z}^{\beta-\beta'}} g_{IJ}(z_k-\tau) d\tau\wedge\bar{\tau} \right| \leq C_{\alpha'\beta'}e^{-h_{\alpha'\beta'}|\zeta|}
\]
holds for some $C_{\alpha'\beta'}>0$ and $h_{\alpha'\beta'}>0$.
As the sets $\{\alpha\in\mathbb{Z}^n_{\geq 0} \mid 0\leq \alpha' \leq \alpha\}$ and $\{\beta\in\mathbb{Z}^n_{\geq 0} \mid 0 \leq \beta' \leq \beta\}$ are finite we obtain
\[
\left| \frac{\partial^\alpha}{\partial z^\alpha} \frac{\partial^\beta}{\partial \bar{z}^\beta} G_{IJ} \right| \leq Ce^{-h|\zeta|}
\]
for some $C>0$ and $h>0$.\par

Now we construct the solution $u$.
Set
\[
G=\psum[I,J]G_{IJ}dz^I\wedge d\bar{z}^J.
\]
It follows that
\[
\bar{\partial}G=\psum[I,J]\psum[j]\frac{\partial G}{\partial\bar{z}_j}d\bar{z}_j\wedge dz^I\wedge d\bar{z}^J=d\bar{z}_k\wedge g+h_1,
\]
where $h_1$ is the sum when $j$ runs from $1$ to $k-1$ and does not involve $d\bar{z}_k,\dots,d{\bar{z}}_n$.
Thus by the hypothesis of the induction we can find $v$ such that $\bar{\partial}v=f-\bar{\partial}G$ and $u=v+G$ satisfies the equation $\bar{\partial}u=f$.
The proof has been completed.
\end{proof}

Now let us prove Theorem \ref{iso}.
\begin{proof}
It is sufficient to show that the stalks of sheaves are isomorphic to each other.
One denotes by
\[
\iota_{z^*}:\mathfrak{S}_{z^*}/\mathfrak{N}_{z^*}\longrightarrow\mathfrak{S}^\infty_{z^*}/\mathfrak{N}^\infty_{z^*}
\]
the induced morphism from $\iota:\Q\rightarrow \Qinf$.
Since the injectivity of $\iota_{z^*}$ is obvious, we prove the surjectivity of it.\par
Set $F\in\mathfrak{S}^\infty_{z^*}$.
There exist a neighborhood $V=D\times \Gamma$ of $z^*$ and $f\in\mathfrak{S}(V)$ such that $f$ is a representative of $F$.
Then $f$ satisfies $\bp f\in\mathfrak{N}^\infty_{(0,1)}(V)$ and $\bp^2 f =0$.
By Proposition \ref{exist}  there exist $D'\Subset D$ and $g\in\mathfrak{N}^\infty(V')$ with $V'=D'\times \Gamma$ such that $\bp g=\bp f$ holds.
This implies $f-g\in\mathfrak{S}(V')$.
Set $F'=(f-g)_{z^*}$.
Obviously $\iota(F')_{z^*} = F$ holds and the surjectivity of $\iota_{z^*}$ has been proved.
\end{proof}

\section{The equivalence of $\shE$ and $\mathfrak{S}^\infty/\mathfrak{N}^\infty$}

In this section $X$ is assumed to be a complex vector space of dimension $n$.
We identify $X\times X$ with $TX$ by the map
\begin{equation}\label{ident}
\varrho:X\times X\ni (z,z')\mapsto (z,z'-z)\in TX,
\end{equation}
then we can easily see that the following diagram commutes
\[
\xymatrix{
X\times X \ar[rr]^\varrho \ar[rd]_{p_1} & & TX \ar[ld]^\tau \\
 & X. & 
}
\]
Here remark that $p_1$ is the first projection.
The aim of this section is to construct the sheaf morphism $\varsigma:\shE\longrightarrow\Qinf$ and prove the following theorem.
\begin{theo}\label{main}
The sheaf $\shE$ of pseudodifferential operators is isomorphic to the sheaf $\Q$ of classical symbols.
\end{theo}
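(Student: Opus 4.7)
The plan is to construct a sheaf morphism $\varsigma:\shE\to\Qinf$ by an explicit integral formula on Čech-Dolbeault representatives, prove it is an isomorphism, and then conclude Theorem \ref{main} by composing with the inverse of the isomorphism $\iota:\Q\xrightarrow{\sim}\Qinf$ from Theorem \ref{iso}.

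\textbf{Construction of $\varsigma$.} Given $P\in\shE(V)$ on an open convex cone $V$, pick a Čech-Dolbeault representative: a $\bar{\vartheta}$-closed pair $(\omega_1,\omega_{01})\in C^{\infty,(0,n;n)}_{X\times X}(U)\oplus C^{\infty,(0,n;n-1)}_{X\times X}(U\setminus G)$ for some open $U$ with $U\cap\Delta=\pi(V)$ and some closed $G$ with $C_\Delta(G)\subset V^\circ$. Choose a $C^\infty$ cut-off $\chi(z,z')$ equal to $1$ in a neighborhood of $\Delta$ and with support contained in a slightly smaller open set than $U$, so that $\operatorname{supp}(\bp_{z'}\chi)\cap\Delta=\varnothing$ and $\operatorname{supp}(\bp_{z'}\chi)\subset U\setminus G'$ for a suitable cone $G'\subset G$ obtained by shrinking. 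Using the identification \eqref{ident}, define
\begin{equation*}
\varsigma(\omega_1,\omega_{01})(z,\zeta)=\int_{z'\in\{z\}+TX} e^{\langle z-z',\zeta\rangle}\bigl(\chi(z,z')\,\omega_1(z,z')-\bp_{z'}\chi(z,z')\wedge\omega_{01}(z,z')\bigr),
\end{equation*}
integrated over a real $2n$-cycle in the fiber. The cut-off makes the integrand compactly supported in $z'$, so the integral converges and depends smoothly on $z$ and holomorphically on $\zeta$; hence $\varsigma\in C^\infty_z\mathscr{O}_\zeta$ near $V$.

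\textbf{Symbol estimates.} For any $V'\cSubset V$, the cone condition $C_\Delta(G')\subset V^\circ$ together with the positioning of $\operatorname{supp}(\bp_{z'}\chi)$ yields a constant $\delta>0$ such that $\operatorname{Re}\langle z-z',\zeta\rangle\le -\delta|\zeta|$ on the support of the second term for $(z;\zeta)$ in an infinitesimal wedge $W'$ of type $V'$. Meanwhile, on $\operatorname{supp}(\chi)$ the exponent is bounded by $h|\zeta|$ for arbitrarily small $h$ after shrinking the cut-off's support toward $\Delta$. Differentiating in $z$ and $\bar z$ inside the integral, and noting that any $\partial_{z_i}$ falling on $e^{\langle z-z',\zeta\rangle}$ produces a factor $\zeta_i$ which is absorbed into the exponential estimate, one deduces the condition $S2$ of Definition \ref{symbolinf}. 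The condition $S3$ (that $\bp_{z}\varsigma$ is a null-symbol) is the crux: commuting $\bp_z$ past the integral and using $\bp_{\bar z_i}e^{\langle z-z',\zeta\rangle}=0$ plus an integration by parts converts each $\bp_z$-derivative into a $\bp_{z'}$-derivative acting on $\chi\omega_1-\bp_{z'}\chi\wedge\omega_{01}$. By the $\bar{\vartheta}$-closedness $\omega_1=\bp\omega_{01}$ on $U\setminus G$, the surviving terms are supported inside $\operatorname{supp}(\bp_{z'}\chi)\subset U\setminus G'$, giving the uniform decay $e^{-h|\zeta|}$ required by $N2$.

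\textbf{Well-definedness and isomorphism.} If $(\omega_1,\omega_{01})=\bar{\vartheta}(\alpha_1,\alpha_{01})$ is a Čech-Dolbeault coboundary (with $\alpha_0=0$ as allowed by Proposition \ref{uniqueC}), a Stokes' theorem computation combined with the same exponential estimates shows $\varsigma(\omega_1,\omega_{01})\in\mathfrak{N}^\infty(W')$; independence of the choices of $U$, $G$, and $\chi$ is checked similarly by interpolating between two choices. Thus $\varsigma$ descends to a well-defined sheaf morphism $\shE\to\Qinf$. To prove bijectivity at each stalk $z^*\in\ring{T}^*X$, one constructs an inverse: given a representative $f(z,\zeta)\in\mathfrak{S}^\infty(W)$ (possibly first replaced by a classical symbol via Theorem \ref{iso}), form the oscillatory kernel $K(z,z')=\int_{\gamma}e^{\langle z'-z,\zeta\rangle}f(z,\zeta)\,d\zeta$ over a suitable contour $\gamma$ depending on the cone; this produces a Čech-Dolbeault class in $\shE$ whose image under $\varsigma$ recovers $f$ modulo $\mathfrak{N}^\infty$. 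Combined with $\iota^{-1}:\Qinf\xrightarrow{\sim}\Q$ this gives the asserted isomorphism $\shE\cong\Q$.

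\textbf{Main obstacle.} The hardest step is verifying condition $S3$---that $\bp_z\varsigma$ is a null-symbol---because this is exactly where the purely topological Čech-Dolbeault vanishing $\omega_1-\bp\omega_{01}=0$ on $U\setminus G$ must be converted into the \emph{exponential decay} $e^{-h|\zeta|}$ via the cone condition $C_\Delta(G)\subset V^\circ$. The bookkeeping of cut-off derivatives and boundary terms after moving $\bp_z$ through the integral, together with ensuring that all estimates survive the (infinitely many) higher derivatives in $z,\bar z$, is where the $C^\infty$-type formalism of $\Qinf$ genuinely pays off over the purely holomorphic class $\Q$.
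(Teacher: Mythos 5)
Your construction of $\varsigma$ is essentially the one in the paper: there, one integrates $\omega_1$ over $D_z=(D_1\setminus D_2)\cap p_1^{-1}(z)$ and $\omega_{01}$ over $E_z=(\partial D_1\setminus D_2)\cap p_1^{-1}(z)$ against $e^{\langle z'-z,\zeta\rangle}$, with $D_1,D_2$ subject to the conditions D1--D7 that adapt the cycle to the cones $\gamma_1\supset G$ and $\gamma_2$; your smooth cut-off $\chi$ is the mollified indicator of such a domain, and your requirement $\operatorname{supp}(\bp_{z'}\chi)\subset U\setminus G'$ plays the role of condition D5. One caveat: such a $\chi$ cannot depend only on the distance to $\Delta$, because $G$ is conic along $\Delta$ and any shell around $\Delta$ meets it; the region where $\bp_{z'}\chi\neq 0$ must be pushed into the union of a small ball around the diagonal and the cone where the exponent decays, and your phrase about ``shrinking the cut-off's support toward $\Delta$'' for the $S2$ bound conceals exactly the splitting the paper performs into $D_{z,1}(\varepsilon)$ (disposed of by Stokes using $\omega_1=\bp\omega_{01}$ off $G$) and $D_{z,2}(\varepsilon)\subset\varrho^{-1}(\gamma_2)\cup B((z,z),\varepsilon)$. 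On the positive side, your fixed-fiber cut-off formulation makes differentiation under the integral sign immediate, whereas the paper needs Lemma \ref{prodlem} (the product domains $\widetilde D(z_0,\varepsilon)$ and the coordinate changes $\Phi_i$) precisely because its integration domains $D_z,E_z$ move with $z$.

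The genuine gap is in the isomorphism step. The paper never inverts $\varsigma$ directly: it establishes the commutative triangle \eqref{com2map}, $\varsigma_{z^*}=\iota_{z^*}\circ\sigma$, where $\sigma$ is Aoki's symbol map (an isomorphism by Theorem \ref{sigmaiso}) and $\iota$ is the isomorphism of Theorem \ref{iso}; verifying that commutativity occupies the entire appendix (a third model $H^{0,n,n}_{\bar{\vartheta}}(\mathcal{W},\mathcal{W}')$ into which both representatives embed, the honeycomb systems $R_{\alpha}$, and the inductive identity of Theorem \ref{NisoA}). You instead assert that the quantization $f\mapsto\int_{\gamma}e^{\langle z'-z,\zeta\rangle}f(z,\zeta)\,d\zeta$ is an inverse, but you verify neither composition: surjectivity of $\varsigma_{z^*}$ requires showing $\varsigma\circ Q\approx\mathrm{id}$ modulo null symbols (a contour computation that is essentially Aoki's theorem, or equivalently the appendix calculation), and injectivity --- that $\varsigma(\omega)\in\mathfrak{N}^\infty$ forces $\omega$ to be a $\bar{\vartheta}$-coboundary --- is not addressed at all. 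As written, the bijectivity claim is unsupported; you would have to either carry out both compositions in the $C^\infty$ framework or, as the paper does, identify $\varsigma$ with $\iota\circ\sigma$ and invoke Aoki. Your ``main obstacle'' paragraph points at $S3$, but the real crux of Theorem \ref{main} lies here.
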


\subsection{The map $\varsigma$ from $\shE$ to $\mathfrak{S}^\infty/\mathfrak{N}^\infty$}

Let $\widetilde{V}$ be a closed convex proper cone in $\ring{T}^*X$, and let $V$ and $V'$ be open convex proper cones with $\widetilde{V}\cSubset V'\cSubset V$.
Assume $\pi(\widetilde{V})$ is compact, and $\pi(V')$ and $\pi(V)$ are relatively compact sets.
Recall that we have the cohomological expression
\[
\shE(V)=\ilim[U,G]H^n_{G\cap U}(U;\mathscr{O}^{(0,n)}_{X\times X})
\]
under the suitable conditions.
If we have already obtained the map
\[
\widetilde{\varsigma}:H^n_{G\cap U}(U\,;\,\mathscr{O}^{(0,n)}_{X\times X})\rightarrow\Qinf(V'),
\]
by taking inductive limits $\ilim[U,G]$ and $\ilim[\widetilde{V}\cSubset V'\cSubset V]$ to $\widetilde{\varsigma}$ in this order we have
\[
\varsigma_{\widetilde{V}}:\shE(\widetilde{V})\longrightarrow \Qinf(\widetilde{V}).
\]
Hence our aim can be rephrased to construct the map $\widetilde{\varsigma}$ concretely.\par

Let $\gamma$ be a closed convex cone in $TX$.
We review the $\gamma$-topology on $TX$.
\begin{defi}
The $\gamma$-topology on $TX$ is the topology for which the open sets $\Omega$ satisfy:
\begin{enumerate}
\item $\Omega$ is open for the usual topology.
\item $\Omega\ring{+}\gamma =\Omega$.
\end{enumerate}
Here $\ring{+}$ is defined by
\[
\Omega\ring{+}\gamma=\bigsqcup_{z\in\tau(\Omega)}(\Omega_z+\gamma_z),
\]
where $\Omega_z=\Omega\cap \tau^{-1}(z)$ and $\gamma_z=\gamma\cap \tau^{-1}(z)$.
In particular if $\gamma_z=\emptyset$, set $\Omega_z+\gamma_z=\Omega_z$.
\end{defi}
An open set $V$ of $TX$ is called $\gamma$-open if $V$ is open in the sense of $\gamma$-topology.\par
Now let us construct the map
\[
\wsigma:H^n_{G\cap U}(U\,;\,\mathscr{O}^{(0,n)}_{X\times X})\longrightarrow \mathfrak{S}^\infty/\mathfrak{N}^\infty(V').
\]
Let $\Gamma_1$ and $\Gamma_2$ be open convex proper cones in $\ring{T}^*X$ such that
\begin{enumerate}
\item $V'\cSubset \Gamma_2 \cSubset \Gamma_1 \cSubset V$.
\item $G\cap U\subset \varrho^{-1}({\rm Int}\,(\Gamma^\circ_1))\cup \Delta$ in $p^{-1}_1(\pi(V'))$.
\end{enumerate}
Note that $\Delta$ is a diagonal set in $X\times X$.
\begin{rema}
By taking $U$ sufficiently small, we can guarantee the existence of $\Gamma_1$ since $G$ is tangent to $\varrho^{-1}(V^\circ)$ near the edge.
\end{rema}

\begin{figure}[H]
\centering
\vspace{10mm}
\begin{tikzpicture}

\draw[dotted] (0,0)--(2.2,4.3);
\draw[dotted] (0,0)--(4.3,2.2);
\draw[densely dotted] (0,0) -- (4.4,1.4);
\draw[densely dotted] (0,0) -- (1.4,4.4);
\draw (0,0) -- (4.5,0.5);
\draw (0,0) -- (0.5,4.5);
\draw (0,0) -- (4,3);
\draw (0,0) -- (3,4);

\draw[blue] (0,0) to [out=36.87, in=200] (3.3,1.9);
\draw[blue] (3.3,1.9) to [out=20, in=155] (4.5,1.7);

\draw[blue] (0,0) to [out=53.13, in= 250] (1.9,3.3);
\draw[blue] (1.9,3.3) to [out=70, in=300] (1.7,3.9);

\draw[dashed] (-1,3.8) to [out=0 ,in=90] (3.8,-0.2);

\draw (4.5,0.5) node[right]{$\varrho^{-1}({V'}^\circ)$};
\draw (4,3) node[right]{$\varrho^{-1}(V^\circ)$};
\draw (4.5,1.7) node[right]{$G$};
\draw (-1,3.8) node[below]{$U$};

\draw (2,4.8) node[right]{$\varrho^{-1}(\Gamma^\circ_1)$};
\draw (1.9,4.8) node[left]{$\varrho^{-1}(\Gamma^\circ_2)$};

\end{tikzpicture}
\caption{Geometrical relations in $X\times X$}
\end{figure}

We construct the paths of the integrations in the following way.
Set $\gamma_i=\Gamma^\circ_i$ for $i=1,2$.
Let $D_1$ and $D_2$ be open domains in $X\times X$ with $C^\infty$-smooth boundaries such that
\begin{enumerate}
\renewcommand{\labelenumi}{D$\arabic{enumi}$.}
\item $\varrho(D_i)$ is a $\gamma_i$-open set for $i=1,2$.
\item $\Delta_X(\pi(V'))\subset D_1$, where $\Delta_X:X\rightarrow X\times X$ is a diagonal embedding.
\item $\overline{D}_2 \cap p^{-1}_1(\pi(V')) \subset \varrho^{-1}({\rm Int}\,(\gamma_2))$.
\item $\overline{D}\cap p^{-1}_1(\pi(V'))\subset U$ for $D=D_1\setminus D_2$.
\item $\overline{E}\cap p^{-1}_1(\pi(V'))\subset U\setminus G$ for $E=\partial D_1\setminus D_2$.
\item $\partial D_1$ and $\partial D_2$ intersect transversally in $p^{-1}_1(\pi(V'))$
\item $p^{-1}_1(z)$ and $\partial D_1$ (resp. $\partial D_2$) intersect transversally for any $z\in\pi(V')$.
\end{enumerate}

\begin{exam}
Except the conditions D$6$ and D$7$, such $D_1$ and $D_2$ can be easily constructed as follows.
Set
\begin{alignat*}{1}
\widehat{D}_1&= \bigsqcup_{z\in \pi(V')} \{(z,v)\in T_zX \mid {\rm dist}_{T_zX}(v,\gamma_1\cap \tau^{-1}(z))<\varepsilon_1 \} , \\
\widehat{D}_2&= \bigsqcup_{z\in \pi(V')} \{(z,v)\in T_zX \mid {\rm dist}_{T_zX}(v,\tau^{-1}(z)\setminus\gamma_2))>\varepsilon_2 \},
\end{alignat*}
for $\varepsilon_1,\varepsilon_2>0$.
Then $D_1=\varrho^{-1}(\widehat{D}_1)$ and $D_2=\varrho^{-1}(\widehat{D}_2)$ satisfy the conditions D1$\sim$D5 if we take sufficiently small $\varepsilon_1$ and $\varepsilon_2$. \par
Moreover slightly modifying $D_1$ and $D_2$ we have $D_1$ and $D_2$ with $C^\infty$-boundaries which satisfy all the above conditions.
\end{exam}

\begin{figure}[H]
\centering
\vspace{5mm}
\begin{tikzpicture}

\draw[densely dotted] (0,0)--(2,4);
\draw[densely dotted] (0,0)--(4,2);

\draw (-0.2,0)--(2,4.4);
\draw (0,-0.2)--(4.4,2);

\draw (-0.2,0) to [out=243.435, in=206.565] (0,-0.2);

\draw (2.3,5) node[below]{$\varrho^{-1}(\widehat{D}_1)$};
\draw (3.5,2.5) node[below]{$\gamma_1$};

\draw[densely dotted] (0,0) -- (4,1);
\draw[densely dotted] (0,0) -- (1,4);
\draw (0.3,0.3) -- (4,1.225);
\draw (0.3,0.3) -- (1.25,4.1);

\draw (4.8,0.9) node[above]{$\varrho^{-1}(\widehat{D}_2)$};
\draw (3.5,0.7) node[below]{$\gamma_2$};

\end{tikzpicture}
\caption{The figure of $\widehat{D}_1$ and $\widehat{D}_2$}
\end{figure}

Let $D_1$ and $D_2$ be open domains in $X\times X$ satisfying the above conditions.
Set $D=D_1\setminus D_2$, $E=\partial D_1\setminus D_2$, $D_z=D\cap p^{-1}_1(z)$ and $E_z=E\cap p^{-1}_1(z)$.
\begin{defi}
Let $u$ belong to $H^{0,n,n}_{\bar{\vartheta}}(U,U\setminus G)$ and let $\omega=(\omega_1,\omega_{01})$ be a representative of $u$.
One defines the map
\begin{equation}
\widetilde{\varsigma}:H^n_{G\cap U}(U\,;\,\mathscr{O}^{(0,n)}_{X\times X})=H^{0,n,n}_{\bar{\vartheta}}(U,U\setminus G) \rightarrow \mathfrak{S}^\infty/\mathfrak{N}^\infty(V')
\end{equation}
by
\begin{equation}
\wsigma(\omega)(z,\zeta)=\int_{D_z} \omega_1(z,z')\cdot e^{\langle z'-z,\zeta\rangle}-\int_{E_z}\omega_{01}(z,z')\cdot e^{\langle z'-z,\zeta\rangle}.
\end{equation}
\end{defi}

In the next paragraph we will show well-definedness of $\wsigma$.
More precisely we shall prove that $\wsigma$ is independent of the choices of the domains $D_1,D_2$ and a representative $\omega$ of $u$.

\subsection{Well-definedness of the map $\widetilde{\varsigma}$}

Let $V$ and $V'$ be the same as the sets given in the previous subsection.
\begin{prop}\label{welldef}
Let $\omega=(\omega_1,\omega_{01})$ be a representative of $u\in H^{0,n,n}_{\bar{\vartheta}}(U,U\setminus G)$.
The map $\wsigma$ has the following properties.
\begin{enumerate}
\item The image $\wsigma(\omega)$ belongs to $\mathfrak{S}^\infty(V')$.
\item The image $\wsigma(\omega)$ belongs to $\mathfrak{N}^\infty(V')$ if $\omega$ is equal to $0$ as an element of the \v{C}ech-Dolbeault cohomology.
\item The image $\wsigma(\omega)$ does not depend on the choices of $D_1$ and $D_2$.
\end{enumerate}
\end{prop}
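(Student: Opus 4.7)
I shall prove the three claims in order, the common tools being Stokes' theorem applied fiberwise on $D_z$ and $E_z$, the cocycle identities $\bar\partial\omega_1 = 0$ on $U$ and $\omega_1 = \bar\partial\omega_{01}$ on $U \setminus G$, and the polar-cone estimate: for $\zeta \in V' \cSubset \Gamma_2$ and $v \in {\rm Int}(\gamma_2)$ at distance $\geq \varepsilon_2$ from $\partial \gamma_2$, there is a constant $c > 0$ with $|{\rm Re}\,\langle v, \zeta\rangle| \geq c|v||\zeta|$, the sign being such that after Stokes the boundary cycles on $\partial D_2$ produce exponential decay of $e^{\langle z'-z, \zeta\rangle}$. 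For (1), smoothness in $z$ and holomorphy in $\zeta$ are immediate by differentiation under the integral sign (D6--D7 give $C^\infty$-dependence of $D_z, E_z$ on $z$). The growth bound $|\partial^\alpha_z \partial^\beta_{\bar z}\wsigma(\omega)| \leq C_h e^{h|\zeta|}$ for arbitrary $h > 0$ is the delicate step: given $h$, introduce a $C^\infty$ cutoff supported in an $h$-tube around the diagonal; on its complement the cocycle identity $\omega_1 = \bar\partial\omega_{01}$ applies, and Stokes converts that portion into a $\partial D_2$-boundary contribution (exponentially damped by the polar inequality and D3) plus $E_z$-pieces that cancel the subtracted $-\int_{E_z}\omega_{01}\,e^{\langle\cdot\rangle}$ up to null corrections, while the tubular part obeys $|z'-z|\leq h$ and contributes at most $C_h e^{h|\zeta|}$. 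The nullity of $\bar\partial_{z_i}\wsigma(\omega)$ follows analogously: since $e^{\langle z'-z,\zeta\rangle}$ is entire in $z$, $\bar\partial_z$ acts only on forms and cycles, and $\bar\partial\omega_1 = 0$ combined with Stokes reduces the result to a $\partial D_2$-boundary integral with decay $e^{-c|\zeta|}$.

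For (2), substitute $\omega_1 = \bar\partial \eta_1$ and $\omega_{01} = \eta_1 - \bar\partial \eta_{01}$. Applying Stokes on $D_z$ (whose oriented boundary is $E_z - F_z$ with $F_z = D_1 \cap \partial D_2 \cap p_1^{-1}(z)$) and on $E_z$ (with boundary $\partial F_z$), the outer $E_z$-contributions cancel and one obtains
\[
\wsigma(\bar\vartheta\eta)(z,\zeta) = -\int_{F_z}\eta_1\cdot e^{\langle z'-z, \zeta\rangle} + \int_{\partial F_z}\eta_{01}\cdot e^{\langle z'-z, \zeta\rangle}.
\]
By D3, $\varrho(F_z)$ and $\varrho(\partial F_z)$ lie in ${\rm Int}(\gamma_2)$ at distance $\geq \varepsilon_2$ from $\partial\gamma_2$; with $V' \cSubset \Gamma_2$ the polar-cone estimate produces $c > 0$ uniform in $z$ and in the multi-indices such that $|e^{\langle z'-z, \zeta\rangle}|\leq e^{-c|\zeta|}$ on the integration cycles, so $\wsigma(\bar\vartheta\eta)$ and all its $(z, \bar z)$-derivatives (which only contribute polynomial $\zeta$-factors and bounded smooth coefficients) satisfy the null-symbol bound.

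For (3), connect two admissible pairs $(D_1, D_2)$ and $(D_1', D_2')$ by a smooth one-parameter family of admissible pairs $(D_1^t, D_2^t)_{t \in [0, 1]}$, which exists because the conditions D1--D7 are open and are preserved by a suitable $C^\infty$-interpolation. Differentiating $\wsigma(\omega)$ in $t$ via Cartan's homotopy formula produces an expression equal to $\wsigma$ applied to a $\bar\vartheta$-coboundary; by (2) this derivative is a null symbol, and integrating over $t$ shows that $\wsigma(\omega)$ is well-defined modulo $\mathfrak{N}^\infty(V')$. The principal obstacle throughout is the growth estimate in (1): the naive bound $|\wsigma(\omega)| \leq C e^{R|\zeta|}$ with $R = {\rm diam}\,\varrho(D_z)$ fixed by the initial choice of $D$ would place $\wsigma(\omega)$ only in a strictly larger symbol class, and sharpening it to $e^{h|\zeta|}$ for arbitrary $h > 0$ demands the delicate cutoff/cocycle/Stokes rewrite together with precise control of the boundary cancellations between $\partial D_1$- and $\partial D_2$-contributions.
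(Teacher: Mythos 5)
Your overall strategy for (1) and (2) --- Stokes' formula combined with the cocycle identities $\bar\partial\omega_1=0$, $\omega_1=\bar\partial\omega_{01}$ on $U\setminus G$, a decomposition of $D_z$ into a piece near the diagonal (radius controlled by the target $h$) and a piece pushed into $\varrho^{-1}(\gamma_2)$, and the polar-cone decay of $e^{\langle z'-z,\zeta\rangle}$ on the $\gamma_2$-part --- is exactly the route the paper takes, including the observation that the naive bound $e^{R|\zeta|}$ must be sharpened to $e^{h|\zeta|}$ for arbitrary $h$ by shrinking the diagonal neighborhood.

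There is, however, one genuine gap: you declare that differentiation under the integral sign is ``immediate'' from D6--D7. It is not, and this is precisely the technical obstacle the paper isolates before proving the proposition. The cycles $D_z$ and $E_z$ move with $z$, so $\partial^\alpha_z\partial^\beta_{\bar z}\int_{D_z}(\cdots)$ differs from $\int_{D_z}\partial^\alpha_z\partial^\beta_{\bar z}(\cdots)$ by transport terms coming from the variation of the domain, and every subsequent estimate in your argument (the $e^{h|\zeta|}$ bound on all derivatives for condition S2, and especially the nullity of $\bar\partial_{z_i}\wsigma(\omega)$ for S3, where you write that ``$\bar\partial_z$ acts only on forms and cycles'' without controlling the cycle contribution) depends on these transport terms being null symbols. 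The paper handles this with a dedicated lemma (Lemma \ref{prodlem}): near each $z_0$ it replaces $D_z,E_z$ by $z$-independent product cycles $\widetilde{D}(z_0,\varepsilon)\cap p_1^{-1}(z)$, $\widetilde{E}(z_0,\varepsilon)\cap p_1^{-1}(z)$ and shows, using a locally rectifying coordinate change $\Phi_i$ guaranteed by D7, that the discrepancy lies in $\mathfrak{N}^\infty$; only then do derivatives honestly commute with the integral. Your proof needs this step (or an equivalent fiberwise transport computation) to be complete. Separately, in (3) your homotopy argument is under-justified: you assert without proof that any two admissible pairs are joined by a path of admissible pairs, and the $t$-derivative of $\wsigma$ along such a path is a boundary transport integral, not literally ``$\wsigma$ applied to a $\bar\vartheta$-coboundary''; the paper instead deduces (3) directly from the same Stokes/cocycle computations used in (1) and (2) applied to the symmetric difference of the two domain choices, which is the more economical route.
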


For the proof of Proposition \ref{welldef} we expect $\wsigma$ and $\displaystyle \frac{\partial}{\partial z}$ (resp. $\wsigma$ and $\displaystyle \frac{\partial}{\partial \bar{z}}$) to be commutative.
However $\wsigma$ and $\displaystyle \frac{\partial}{\partial z}$ (resp. $\wsigma$ and $\displaystyle \frac{\partial}{\partial \bar{z}}$) do not commute in general since the paths $D_z$ and $E_z$ of the integrations $\wsigma$ depend on the variables $z$.
We begin by surmounting this difficulty.\par

For a fixed point $z_0\in X$ and a constant $\varepsilon>0$, set
\[
B(z_0,\varepsilon)=\{z\in X \mid |z-z_0|<\varepsilon\}.
\]
We give the subsets $\widetilde{D}(z_0,\varepsilon)$ and $\widetilde{E}(z_0,\varepsilon)$ in $X\times X$ by
\begin{alignat*}{1}
\widetilde{D}(z_0,\varepsilon) &= B(z_0,\varepsilon) \times p_2(D_{z_0})\subset X\times X, \\
\widetilde{E}(z_0,\varepsilon) &= B(z_0,\varepsilon) \times p_2(E_{z_0}) \subset X\times X.
\end{alignat*}
We also set
\[
\widehat{\varsigma}_{z_0}(\omega)=\int_{\widetilde{D}(z_0,\varepsilon)\cap p^{-1}_1(z_0)} \omega_1(z,z')\cdot e^{\langle z'-z,\zeta\rangle}-\int_{\widetilde{E}(z_0,\varepsilon) \cap p^{-1}_1(z_0)}\omega_{01}(z,z')\cdot e^{\langle z'-z,\zeta\rangle}.
\]

\begin{lemm}\label{prodlem}
Let $z_0\in \pi(V')$ and let $\omega=(\omega_1,\omega_{01})$ be a representative of $u\in H^{0,n,n}_{\bar{\vartheta}}(U,U\setminus G)$.
Then there exists a constant $\varepsilon>0$ such that the difference of the integrations
\[
\widehat{\varsigma}_{z_0}(\omega) - \wsigma(\omega)
\]
on $\pi^{-1}(B(z_0,\varepsilon))$ belongs to the null class $\mathfrak{N}^\infty(V'\cap \pi^{-1}(B(z_0,\varepsilon)))$.
\end{lemm}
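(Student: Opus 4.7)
The key idea is to interpolate the moving integration cycle $(D_z,E_z)$ in $\{z\}\times X$ with the frozen cycle $(\{z\}\times p_2(D_{z_0}),\{z\}\times p_2(E_{z_0}))$, apply Stokes' theorem using the identity $\omega_1=\bar\partial_{z'}\omega_{01}$ on $U\setminus G$ (a consequence of $\bar\vartheta\omega=0$), and then extract exponential decay from the factor $e^{\langle z'-z,\zeta\rangle}$ via cone duality.

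Concretely, since the model sets $\widehat{D}_1,\widehat{D}_2$ are translation-invariant in the tangent coordinate $w=z'-z$, I introduce the smooth interpolating family
\[
D_z^t:=p_2(D_z)+t(z_0-z)\subset X,\quad t\in[0,1],
\]
with $D_z^0=p_2(D_z)$ and $D_z^1=p_2(D_{z_0})$, together with the analogous $E_z^t$. Choose $\varepsilon>0$ so small that for every $z\in B(z_0,\varepsilon)$ and every $t\in[0,1]$, the swept region stays in $U$ while its lateral boundary $\bigcup_t\partial D_z^t$ remains in $U\setminus G$; conditions D3--D7 together with $G\cap U\subset \varrho^{-1}({\rm Int}\,(\gamma_1))\cup\Delta$ and the strict inclusion $\gamma_1\cSubset\gamma_2$ (the polar of $\Gamma_2\cSubset\Gamma_1$) make this possible. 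Since the exponential $e^{\langle z'-z,\zeta\rangle}$ is $z'$-holomorphic, on $U\setminus G$ we have $\omega_1\cdot e^{\langle z'-z,\zeta\rangle}=\bar\partial_{z'}(\omega_{01}\cdot e^{\langle z'-z,\zeta\rangle})$. Applying Stokes' theorem to the $(2n+1)$-real-dimensional swept chain and combining with the corresponding calculation for the $E$-differences exhibits the difference as
\[
\widehat{\varsigma}_{z_0}(\omega)(z,\zeta)-\wsigma(\omega)(z,\zeta)=\int_{\Sigma_z}\omega_{01}\cdot e^{\langle z'-z,\zeta\rangle},
\]
where $\Sigma_z$ is a compact $(2n-1)$-chain contained in $U\setminus G$ and depending smoothly on $z$.

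To obtain the null-symbol estimates, observe that $\Sigma_z$ consists of one piece swept from $\partial D_1$ (where $w=z'-z$ lies just outside $\gamma_1$ by margin $\varepsilon_1$) and one piece swept from $\partial D_2$ (where $w$ lies inside $\gamma_2$, bounded away from $\gamma_1$). In both cases the strict relation $V'\cSubset\Gamma_2\cSubset\Gamma_1$ yields by cone duality a uniform constant $c>0$ with ${\rm Re}\,\langle w,\zeta\rangle\leq-c|\zeta|$ for all $\zeta\in V'$ and all $w$ occurring on $\Sigma_z$, hence $|e^{\langle z'-z,\zeta\rangle}|\leq e^{-c|\zeta|}$. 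Combined with the compactness of $\Sigma_z$ and the $C^\infty$-smoothness of $\omega_{01}$, this gives $|\widehat{\varsigma}_{z_0}(\omega)-\wsigma(\omega)|\leq Ce^{-c|\zeta|}$. Because $\Sigma_z$ depends smoothly on $z$ and remains uniformly in $U\setminus G$, differentiation $\partial_z^\alpha\partial_{\bar z}^\beta$ may be done under the integral sign; each derivative introduces at most polynomial growth in $\zeta$ (from the exponential) and smooth bounded factors (from $\omega_{01}$), both absorbed by $e^{-c|\zeta|}$. This verifies the bounds of Definition \ref{symbolinf}, placing the difference in $\mathfrak{N}^\infty(V'\cap\pi^{-1}(B(z_0,\varepsilon)))$.

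\textbf{Main obstacle.} The most delicate step is arranging that \emph{both} lateral pieces of $\Sigma_z$ simultaneously satisfy ${\rm Re}\,\langle w,\zeta\rangle\leq-c|\zeta|$ for $\zeta\in V'$: the margin $\varepsilon_1$ of $\widehat D_1$ beyond $\gamma_1$ controls the $\partial D_1$ piece, while the margin $\varepsilon_2$ of $\widehat D_2$ inside $\gamma_2$ combined with $\gamma_1\cSubset\gamma_2$ controls the $\partial D_2$ piece and simultaneously ensures the key $F_z\cap G=\emptyset$ that is needed so that $\omega_1=\bar\partial\omega_{01}$ is available to invoke Stokes' theorem on that face.
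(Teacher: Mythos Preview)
Your homotopy-and-Stokes reduction to a single integral $\int_{\Sigma_z}\omega_{01}\,e^{\langle z'-z,\zeta\rangle}$ has a genuine gap. For Stokes to convert the $\omega_1$-difference into an $\omega_{01}$-boundary term, the swept region between $D_z$ and the frozen slice must lie in $U\setminus G$, because $\omega_{01}$ is only defined there and the identity $\omega_1\,e=\bar\partial_{z'}(\omega_{01}\,e)$ holds only on $U\setminus G$. The outer boundary $E_z=\partial D_1\setminus D_2$ is in $U\setminus G$ by D5, but the \emph{inner} boundary $\partial D_z\setminus E_z\subset\partial D_2\cap\overline{D_1}$ lies in $\varrho^{-1}(\gamma_2)$; since $G\cap U\subset\varrho^{-1}(\operatorname{Int}\gamma_1)\cup\Delta$ and $\gamma_1\subset\gamma_2$, nothing prevents $\partial D_2$ (hence the swept inner boundary) from meeting $G$. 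Shrinking $\varepsilon$ only moves $z$ closer to $z_0$; it does not move $\partial D_z$ off $G$. A related symptom: if your Stokes step \emph{were} valid, the $E$-contributions would cancel the outer part of $\partial D$, so $\Sigma_z$ would consist solely of the inner-boundary difference, not the two-piece object you describe; and your decay claim for the ``$\partial D_1$ piece'' (points at distance $\varepsilon_1$ from $\gamma_1$) is unsupported, since being outside $\gamma_1=\Gamma_1^{\circ}$ gives no uniform sign for $\operatorname{Re}\langle w,\zeta\rangle$ over all $\zeta\in V'$.

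The paper avoids this obstacle by \emph{not} reducing the residual difference to an $\omega_{01}$-integral. It introduces auxiliary domains $D'\subset D$ and $\widetilde D'(z_0,\varepsilon)\subset\widetilde D(z_0,\varepsilon)$ that keep the inner boundary $\partial D\setminus E$ but are shrunk near $E$ so that, fibrewise, $D'_z$ and $\widetilde D'(z_0,\varepsilon)\cap p_1^{-1}(z)$ agree on $U\setminus\varrho^{-1}(\gamma_2)$. Stokes is used only on the thin collars $D\setminus D'$ and $\widetilde D\setminus\widetilde D'$ near $E$, which do lie in $U\setminus G$; after that step the remaining discrepancy is an integral of $\omega_1$ (defined on all of $U$, including across $G$) over a region contained in $\varrho^{-1}(\gamma_2)$, and the exponential factor supplies the $e^{-h|\zeta|}$ bound there. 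The essential point your argument misses is that one must separate the ``near $E$'' part (where Stokes with $\omega_{01}$ is legal) from the ``inside $\gamma_2$'' part (where one keeps $\omega_1$ and uses decay of the kernel).
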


\begin{proof}
It suffices to show that $\widehat{\varsigma}_{z_0}(\omega) - \wsigma(\omega)$ satisfies the condition $N2$ in Definition \ref{symbolinf}.
Since the domains $D_1$ and $D_2$ are smooth we can assume the following properties by taking sufficiently small $\varepsilon$.
\begin{enumerate}
\renewcommand{\labelenumi}{(\roman{enumi})}
\item $(z,z) \in \widetilde{D}(z_0,\varepsilon)$ for $z\in B(z_0,\varepsilon)$.
\item $\partial \widetilde{D}(z_0,\varepsilon) \setminus \widetilde{E}(z_0,\varepsilon) \subset \varrho^{-1}(\gamma_2)$.
\end{enumerate}
Here we take domains $D'$ and $\widetilde{D}'(z_0,\varepsilon)$ in $X \times X$ satisfying
\begin{enumerate}
\item $D'\subset D$ and $\widetilde{D}'(z_0,\varepsilon) \subset \widetilde{D}(z_0,\varepsilon)$.
\item $\partial D\setminus E\subset \partial D'$ and $\partial \widetilde{D}(z_0,\varepsilon)\setminus \widetilde{E}(z_0,\varepsilon) \subset \widetilde{D}'(z_0,\varepsilon)$.
\item For any $z\in B(z_0,\varepsilon)$, $D' \cap p^{-1}_1(z) = \widetilde{D}'(z_0,\varepsilon)\cap p^{-1}_1(z)$ on $U\setminus \varrho^{-1}(\gamma_2)$.
\end{enumerate}
Set
\begin{align*}
E'&= \partial D' \setminus (\partial D \setminus E),\\
\widetilde{E}'(z_0,\varepsilon)&=\partial \widetilde{D}'(z_0,\varepsilon)\setminus (\partial \widetilde{D}(z_0,\varepsilon) \setminus \widetilde{E}(z_0,\varepsilon)).
\end{align*}

By the Stoke's formula we have
\begin{align*}
\widehat{\varsigma}_{z_0}(\omega)&= \int_{\widetilde{D}(z_0,\varepsilon)\cap p^{-1}_1(z)} \omega_1(z,z')\cdot e^{\langle z'-z,\zeta\rangle}-\int_{\widetilde{E}(z_0,\varepsilon)\cap p^{-1}_1(z)}\omega_{01}(z,z')\cdot e^{\langle z'-z,\zeta\rangle}\\
&= \int_{\widetilde{D}'(z_0,\varepsilon)\cap p^{-1}_1(z)} \omega_1(z,z')\cdot e^{\langle z'-z,\zeta\rangle}-\int_{\widetilde{E}'(z_0,\varepsilon) \cap p^{-1}_1(z)}\omega_{01}(z,z')\cdot e^{\langle z'-z,\zeta\rangle},
\end{align*}
and
\begin{align*}
\wsigma(\omega) &= \int_{D_z} \omega_1(z,z')\cdot e^{\langle z'-z,\zeta\rangle}-\int_{E_z}\omega_{01}(z,z')\cdot e^{\langle z'-z,\zeta\rangle}\\
&= \int_{D'_z} \omega_1(z,z')\cdot e^{\langle z'-z,\zeta\rangle}-\int_{E'_z}\omega_{01}(z,z')\cdot e^{\langle z'-z,\zeta\rangle}.
\end{align*}
Here we write $D'_z$ and $E'_z$ instead of $D'\cap p^{-1}_1(z)$ and $E' \cap p^{-1}_1(z)$ for short, respectively.
Therefore we obtain
\begin{alignat*}{1}
& \widehat{\varsigma}_{z_0}(\omega)-\wsigma(\omega)\\
=& \left( \int_{\widetilde{D}(z_0,\varepsilon)\cap p^{-1}_1(z)} \omega_1(z,z')\cdot e^{\langle z'-z,\zeta\rangle}-
\int_{\widetilde{E}(z_0,\varepsilon)\cap p^{-1}_1(z)}\omega_{01}(z,z')\cdot e^{\langle z'-z,\zeta\rangle} \right) \\
& \hspace{60pt} -\left(\int_{D_z} \omega_1(z,z')\cdot e^{\langle z'-z,\zeta\rangle}-\int_{E_z}\omega_{01}(z,z')\cdot e^{\langle z'-z,\zeta\rangle}\right) \\
 = &\left( \int_{\widetilde{D}'(z_0,\varepsilon)\cap p^{-1}_1(z)} \omega_1(z,z')\cdot e^{\langle z'-z,\zeta\rangle}-\int_{D'_z} \omega_1(z,z')\cdot e^{\langle z'-z,\zeta\rangle} \right) \\
& \hspace{60pt} - \left( \int_{\widetilde{E}'(z_0,\varepsilon) \cap p^{-1}_1(z)}\omega_{01}(z,z')\cdot e^{\langle z'-z,\zeta\rangle} -\int_{E'_z}\omega_{01}(z,z')\cdot e^{\langle z'-z,\zeta\rangle} \right).
\end{alignat*}

As $\widetilde{D}'(z_0,\varepsilon)\cap p^{-1}_1(z_0)$ and $D'_z$ (resp. $\widetilde{E}'(z_0,\varepsilon)\cap p^{-1}_1(z_0)$ and $E'_z$) coincide in the domain $U\setminus \varrho^{-1}(\gamma_2)$ by the definitions of $\widetilde{D}'(z_0,\varepsilon)$ and $D'$ (resp. $\widetilde{E}'(z_0,\varepsilon)$ and $E'$), we get
\begin{equation}
\begin{aligned}\label{integralz}
&\int_{\widetilde{D}'(z_0,\varepsilon)\cap p^{-1}_1(z_0)} \omega_1(z,z')\cdot e^{\langle z'-z,\zeta\rangle}-\int_{D'_z} \omega_1(z,z')\cdot e^{\langle z'-z,\zeta\rangle} \\
= &\int_{\widetilde{D}'(z_0,\varepsilon)\cap p^{-1}_1(z_0) \cap \varrho^{-1}(\gamma_2)} \omega_1(z,z')\cdot e^{\langle z'-z,\zeta\rangle} - \int_{D'_z \cap \varrho^{-1}(\gamma_2)} \omega_1(z,z')\cdot e^{\langle z'-z,\zeta\rangle}.
\end{aligned}
\end{equation}
We estimate the former integration in \eqref{integralz}.
Let $\alpha\in \mathbb{Z}^n_{\geq 0}$ and $\beta \in \mathbb{Z}^n_{\geq 0}$ be multi-indices.
Since the path of the integration does not depend on $z$, there exist constants $C>0$ and $h>0$ such that
\begin{alignat*}{1}
& \left| \frac{\partial^\alpha}{\partial z^\alpha} \frac{\partial^\beta}{\partial \bar{z}^\beta} \int_{\widetilde{D}'(z_0,\varepsilon)\cap p^{-1}_1(z_0) \cap \varrho^{-1}(\gamma_2)} \omega_1(z,z')\cdot e^{\langle z'-z,\zeta\rangle} \right| \\
=& \left| \int_{\widetilde{D}'(z_0,\varepsilon)\cap p^{-1}_1(z_0) \cap \varrho^{-1}(\gamma_2)} \frac{\partial^\alpha}{\partial z^\alpha} \frac{\partial^\beta}{\partial \bar{z}^\beta} (\omega_1(z,z')\cdot e^{\langle z'-z,\zeta\rangle}) \right| \\
=& \left| \int_{\widetilde{D}'(z_0,\varepsilon)\cap p^{-1}_1(z_0) \cap \varrho^{-1}(\gamma_2)} \sum_{0\leq\alpha'\leq \alpha} \frac{\partial^{\alpha'}}{\partial z^{\alpha'}}\frac{\partial^\beta}{\partial z^\beta}\omega_1(z,z')\cdot \frac{\partial^{\alpha-\alpha'}}{\partial z^{\alpha-\alpha'}}e^{\langle z'-z,\zeta\rangle} \right| \\
\leq& \sum_{0\leq\alpha'\leq \alpha} \int_{\widetilde{D}'(z_0,\varepsilon)\cap p^{-1}_1(z_0) \cap \varrho^{-1}(\gamma_2)}  \left| \frac{\partial^{\alpha'}}{\partial z^{\alpha'}}\frac{\partial^\beta}{\partial z^\beta}\omega_1(z,z')\cdot \frac{\partial^{\alpha-\alpha'}}{\partial z^{\alpha-\alpha'}}e^{\langle z'-z,\zeta\rangle} \right| \\
=& \sum_{0\leq\alpha'\leq \alpha} \int_{\widetilde{D}'(z_0,\varepsilon)\cap p^{-1}_1(z_0) \cap \varrho^{-1}(\gamma_2)}  \left| \frac{\partial^{\alpha'}}{\partial z^{\alpha'}}\frac{\partial^\beta}{\partial z^\beta}\omega_1(z,z')\cdot (-\zeta)^{\alpha-\alpha'}e^{\langle z'-z,\zeta\rangle} \right| \leq Ce^{-h|\zeta|}.
\end{alignat*}

Next we estimate the latter integration in \eqref{integralz}.
Let $D'_z\cap\varrho^{-1}(\gamma_2)=\sqcup^N_{i=1} K_i$ be a partition such that each $K_i$ is bounded measurable subset in $p^{-1}_1(z)$.
Then we have
\begin{alignat*}{1}
\left| \frac{\partial^\alpha}{\partial z^\alpha} \frac{\partial^\beta}{\partial \bar{z}^\beta} \int_{D'_z \cap \varrho^{-1}(\gamma_2)} \omega_1(z,z')\cdot e^{\langle z'-z,\zeta\rangle} \right| =& \left| \frac{\partial^\alpha}{\partial z^\alpha} \frac{\partial^\beta}{\partial \bar{z}^\beta} \int_{\sqcup^N_{i=1} K_i} \omega_1(z,z')\cdot e^{\langle z'-z,\zeta\rangle} \right| \\
\leq& \sum^N_{i=1} \left| \frac{\partial^\alpha}{\partial z^\alpha} \frac{\partial^\beta}{\partial \bar{z}^\beta} \int_{K_i} \omega_1(z,z')\cdot e^{\langle z'-z,\zeta\rangle} \right|.
\end{alignat*}
Give the local coordinate $(z,z')=(z_1,\dots,z_n,z'_1,\dots,z'_n)$ for an open neighborhood $U_i$ of $K_i$ and consider the coordinate transformation
\[
\Phi_i: (z,z') \mapsto (z,\tilde{z}^i)
\]
such that $L_i=\Phi_i(K_i)$ is independent of the variables $z$.
Then we have
\[
\frac{\partial^\alpha}{\partial z^\alpha} \frac{\partial^\beta}{\partial \bar{z}^\beta} \int_{K_i} \omega_1(z,z')\cdot e^{\langle z'-z,\zeta\rangle}  = \frac{\partial^\alpha}{\partial z^\alpha} \frac{\partial^\beta}{\partial \bar{z}^\beta} \int_{L_i} \widetilde{\omega}_1(z,\tilde{z}^i)\cdot e^{\langle \Phi^{-1}(\tilde{z}^i)-z,\zeta \rangle} \cdot \left| J_{\Phi_i} \right|.
\]
Here $\widetilde{\omega}_1(z,\tilde{w}^i) = \omega_1(z,w) $ holds under the coordinate transform $\Phi_i$, and $J_{\Phi_i}$ is the Jacobian.

\begin{rema}
The existence of the coordinate transformation $\Phi_i$ is guaranteed by the condition $D7$.
\end{rema}

Since the domain $K_i$ is independent of the variables $z$ we obtain
\begin{alignat*}{1}
& \left| \frac{\partial^\alpha}{\partial z^\alpha} \frac{\partial^\beta}{\partial \bar{z}^\beta} \int_{K_i} \omega_1(z,z')\cdot e^{\langle z'-z,\zeta\rangle} \right| \\
=& \left| \frac{\partial^\alpha}{\partial z^\alpha} \frac{\partial^\beta}{\partial \bar{z}^\beta} \int_{L_i} \widetilde{\omega}_1(z,\tilde{z}^i)\cdot e^{\langle \Phi^{-1}(\tilde{z}^i)-z,\zeta \rangle} \cdot \left| J_{\Phi_i} \right| \right| \\
 \leq & \int_{L_i} \left| \frac{\partial^\alpha}{\partial z^\alpha} \frac{\partial^\beta}{\partial \bar{z}^\beta} (\widetilde{\omega}_1(z,\tilde{z}^i) \cdot \left| J_{\Phi_i} \right| \cdot e^{\langle \Phi^{-1}(\tilde{z}^i)-z,\zeta \rangle}) \right| \\
 \leq & \int_{L_i} \sum_{0\leq \alpha' \leq \alpha} \left| \frac{\partial^{\alpha'}}{\partial z^{\alpha'}} \frac{\partial^\beta}{\partial \bar{z}^\beta} (\widetilde{\omega}_1(z,\tilde{z}^i) \cdot \left| J_{\Phi_i} \right|) \cdot \frac{\partial^{\alpha-\alpha'}}{\partial z^{\alpha-\alpha'}} e^{\langle \Phi^{-1}(\tilde{z}^i)-z,\zeta \rangle} \right| \\
 \leq & \sum_{0\leq \alpha' \leq \alpha} \int_{L_i} \left| \frac{\partial^{\alpha'}}{\partial z^{\alpha'}} \frac{\partial^\beta}{\partial \bar{z}^\beta} (\widetilde{\omega}_1(z,\tilde{z}^i) \cdot \left| J_{\Phi_i} \right|) \cdot (-\zeta)^{\alpha-\alpha'} e^{\langle \Phi^{-1}(\tilde{z}^i)-z,\zeta \rangle} \right|.
\end{alignat*}
Moreover the absolute value of its derivative is bounded on $L_i$ since $\widetilde{\omega}_1(z,\tilde{z}^i) \cdot \left| J_{\Phi_i} \right|$ is a $C^\infty$-function.
For each $i$ there exists $M_i,C_i>0$ and $h_i>0$ such that
\begin{alignat*}{1}
  & \sum_{0\leq \alpha' \leq \alpha} \int_{L_i} \left| \frac{\partial^{\alpha'}}{\partial z^{\alpha'}} \frac{\partial^\beta}{\partial \bar{z}^\beta} (\widetilde{\omega}_1(z,\tilde{w}^i) \cdot \left| J_{\Phi_i} \right|) \cdot (-\zeta)^{\alpha-\alpha'} e^{\langle \Phi^{-1}(\tilde{z}^i)-z,\zeta \rangle} \right| \\
\leq & \sum_{0\leq \alpha' \leq \alpha} \int_{L_i} C_i\cdot \left|(-\zeta)^{\alpha-\alpha'} e^{\langle \Phi^{-1}(\tilde{z}^i)-z,\zeta \rangle} \right| \leq  M_i C_i \cdot e^{-h|\zeta|} \\
\end{alignat*}

Finally for $M=\max \{M_i\}, C=\max \{ C_i \}$ and $h=\min \{ h_i \}$ we have
\[
\sum^N_{i=1} \left| \frac{\partial^\alpha}{\partial z^\alpha} \frac{\partial^\beta}{\partial \bar{z}^\beta} \int_{K_i} \omega_1(z,z')\cdot e^{\langle z'-z,\zeta\rangle} \right|
\leq \sum^N_{i=1} M_i C_i e^{-h_i|\zeta|} \leq NMCe^{-h|\zeta|}.
\]
We can apply the same argument to
\[
\int_{\widetilde{E}'(z_0,\varepsilon) \cap p^{-1}_1(z)}\omega_{01}(z,z')\cdot e^{\langle z'-z,\zeta\rangle} -\int_{E'_z}\omega_{01}(z,z')\cdot e^{\langle z'-z,\zeta\rangle}
\]
and these complete the proof.
\end{proof}

Now we start the proof of Proposition \ref{welldef}.
In the following proof the Dolbeault operator $\bar{\partial}_z+\bar{\partial}_{z'}$ is denoted by $\bar{\partial}$ without notice.
\begin{proof}
\begin{enumerate}
\item We can divide $D_z$ into two subsets $D_{z,1}(\varepsilon)$ and $D_{z,2}(\varepsilon)$ in $p^{-1}_1(z)$ for sufficiently small $\varepsilon>0$ which have piecewise smooth boundaries such that
\begin{enumerate}
\item $G\cap D_{z,1}(\varepsilon)=\emptyset$.
\item $\partial E_z(\varepsilon)=\partial (\partial D_z\setminus E_z)$.
\item $D_{z,2}(\varepsilon)\subset (\varrho^{-1}(\gamma_2) \cup B((z,z),\varepsilon))\cap p^{-1}_1(z)$ where $B((z,z),\varepsilon)$ is an open ball in $X\times X$ with radius $\varepsilon$ whose center is at $(z,z)$.
\end{enumerate}
Here we set $E_z(\varepsilon)=\partial D_{z,1}(\varepsilon) \cap \partial D_{z,2}(\varepsilon)$.

\begin{figure}[H]
\centering
\begin{tikzpicture}

\fill[blue!30] (3.6,1.3) -- (0.2,-0.3) to [out=200, in=250] (-0.3,0.2) -- (1.3,3.6) -- (0.1,0.25) to [out=240, in=0] (0,0.2) to [out=180, in=135] (-0.1,-0.1) to [out=315, in=270] (0.2,0) to [out=90, in=30] (0.25,0.1) -- (3.6,1.3);

\draw[densely dotted] (0,0)--(2.2,4);
\draw[densely dotted] (0,0)--(4,2.2);

\draw[densely dotted] (0,0) -- (4,1);
\draw[densely dotted] (0,0) -- (1,4);

\draw[densely dotted] (0,0) to [out=30, in=190] (4,1.5);
\draw[densely dotted] (0,0) to [out=60, in=260] (1.5,4);

\draw[densely dotted] (3.6,1.3) -- (1.55,0.9) to [out=180, in=265] (0.9,1.55) -- (1.3,3.6);

\draw (3.6,1.3) -- (0.2,-0.3) to [out=200, in=250] (-0.3,0.2) -- (1.3,3.6) -- (0.1,0.25) to [out=240, in=0] (0,0.2) to [out=180, in=135] (-0.1,-0.1) to [out=315, in=270] (0.2,0) to [out=90, in=30] (0.25,0.1) -- (3.6,1.3);

\draw (3,4.2) node[left]{$\varrho^{-1}(\gamma_1)$};
\draw (1.2,4.2) node[left]{$\varrho^{-1}(\gamma_2)$};

\draw (4,1.5) node[right]{$G$};

\end{tikzpicture}
\vspace{5mm}
\begin{tikzpicture}

\fill[blue!30] (3.6,1.3) -- (1.55,0.9) to [out=190, in=260] (0.9,1.55) -- (1.3,3.6) -- (0.1,0.25) to [out=240, in=0] (0,0.2) to [out=180, in=135] (-0.1,-0.1) to [out=315, in=270] (0.2,0) to [out=90, in=30] (0.25,0.1) -- (3.6,1.3);

\draw (3.6,1.3) -- (1.55,0.9) to [out=190, in=260] (0.9,1.55) -- (1.3,3.6) -- (0.1,0.25) to [out=240, in=0] (0,0.2) to [out=180, in=135] (-0.1,-0.1) to [out=315, in=270] (0.2,0) to [out=90, in=210] (0.25,0.1) -- (3.6,1.3);

\draw[densely dotted] (3.6,1.3) -- (0.2,-0.3) to [out=200, in=250] (-0.3,0.2) -- (1.3,3.6);

\draw[densely dotted] (0,0)--(2.2,4);
\draw[densely dotted] (0,0)--(4,2.2);

\draw[densely dotted] (0,0) -- (4,1);
\draw[densely dotted] (0,0) -- (1,4);

\draw[densely dotted] (0,0) to [out=30, in=190] (4,1.5);
\draw[densely dotted] (0,0) to [out=60, in=260] (1.5,4);

\draw (3,4.2) node[left]{$\varrho^{-1}(\gamma_1)$};
\draw (1.2,4.2) node[left]{$\varrho^{-1}(\gamma_2)$};

\draw (4,1.5) node[right]{$G$};

\end{tikzpicture}
\caption{The subsets $D_{1,\varepsilon}$ and $D_{2,\varepsilon}$}
\end{figure}

\begin{figure}[H]
\centering
\begin{tikzpicture}

\draw[gray] (0,0) -- (4,1);
\draw[gray] (0,0) -- (1,4);

\draw[dotted] (0,0) circle (3);
\draw (3,-2.2) node[below]{radius $\varepsilon$};

\draw[gray] (0,0) to [out=25, in=190] (4,1.5);
\draw[gray] (0,0) to [out=65, in=260] (1.5,4);

\draw (4,1.2) -- (1.9,0.65) to [out=195, in=90] (1.5,0) to [out=270 ,in=315] (-1,-1) to [out=135 ,in=180] (0,1.5) to [out=0, in=255] (0.65,1.9) -- (1.2,4);

\draw (0.9,3.5) node[left]{$\varrho^{-1}(\gamma_2)$};
\draw (3.6,1.7) node[right]{$G$};
\draw (-1,-1) node[left]{$E(\varepsilon)$};
\end{tikzpicture}
\caption{$E_\varepsilon$ near the vertex}
\end{figure}

By the Stoke's formula we have
\begin{alignat*}{1}
\int_{D_{z,1}(\varepsilon)}\omega_1(z,z')\cdot e^{\langle z'-z,\zeta\rangle}
=& \int_{D_{z,1}(\varepsilon)}\bar{\partial}\omega_{01}(z,z')\cdot e^{\langle z'-z,\zeta\rangle} \\
=& \int_{E_z+E(\varepsilon)}\omega_{01}(z,z')\cdot e^{\langle z'-z,\zeta\rangle}.
\end{alignat*}
For any $h>0$, by retaking $\varepsilon$ small enough to satisfy ${\rm Re}\,\langle z'-z , \zeta \rangle\leq h|\zeta|$ in $B((z,z),\varepsilon)\cap p^{-1}_1(z)$, we can see that
\begin{align*}
&\left|\frac{\partial^\alpha}{\partial z^\alpha}\frac{\partial^\beta}{\partial \bar{z}^\beta}\wsigma(\omega)\right| \\
=&\left|\int_{D_{z,2}(\varepsilon)} \frac{\partial^\alpha}{\partial z^\alpha}\frac{\partial^\beta}{\partial \bar{z}^\beta}\left( \omega_1(z,z')\cdot e^{\langle z'-z,\zeta\rangle} \right) +\int_{E(\varepsilon)}\frac{\partial^\alpha}{\partial z^\alpha}\frac{\partial^\beta}{\partial \bar{z}^\beta} \left( \omega_{01}(z,z')\cdot e^{\langle z'-z,\zeta \rangle} \right) \right|\\
=&\left|\int_{D_{z,2}(\varepsilon)} \frac{\partial^\alpha}{\partial z^\alpha} \left( \frac{\partial^\beta}{\partial \bar{z}^\beta} \omega_1(z,z')\cdot e^{\langle z'-z,\zeta\rangle} \right) +\int_{E(\varepsilon)}\frac{\partial^\alpha}{\partial z^\alpha} \left( \frac{\partial^\beta}{\partial \bar{z}^\beta} \omega_{01}(z,z')\cdot e^{\langle z'-z,\zeta \rangle} \right) \right|\\
\leq&\left|\int_{D_{z,2}(\varepsilon)} \sum_{0\leq \alpha' \leq \alpha} \frac{\partial^{\alpha'}}{\partial z^{\alpha'}} \frac{\partial^\beta}{\partial \bar{z}^\beta} \omega_1(z,z')\cdot \frac{\partial^{\alpha-\alpha'}}{\partial z^{\alpha-\alpha'}} e^{\langle z'-z,\zeta\rangle} \right| \\
& \hspace{4cm}+ \left| \int_{E(\varepsilon)} \sum_{0\leq \alpha' \leq \alpha} \frac{\partial^{\alpha'}}{\partial z^{\alpha'}} \frac{\partial^\beta}{\partial \bar{z}^\beta} \omega_{01}(z,z')\cdot \frac{\partial^{\alpha-\alpha'}}{\partial z^{\alpha-\alpha'}} e^{\langle z'-z,\zeta \rangle} \right|\\
\leq& \sum_{0\leq \alpha' \leq \alpha} \int_{D_{z,2}(\varepsilon)} \left|\frac{\partial^{\alpha'}}{\partial z^{\alpha'}}\frac{\partial^\beta}{\partial \bar{z}^\beta}\omega_1(z,z')\right|\cdot \left|P_{\alpha'}(\zeta)\right| \cdot e^{{\rm Re}\,\langle z'-z,\zeta\rangle}\\
& \hspace{4cm} + \sum_{0\leq \alpha' \leq \alpha} \int_{E(\varepsilon)} \left|\frac{\partial^{\alpha'}}{\partial z^{\alpha'}}\frac{\partial^\beta}{\partial \bar{z}^\beta}\omega_{01}(z,z') \right| \cdot \left|P_{\alpha'}(\zeta)\right| \cdot e^{{\rm Re}\langle z'-z,\zeta\rangle}\\
 \leq & C\cdot e^{2h|\zeta|},
\end{align*}
where $P_{\alpha'}(\zeta)$ is some polynomial with respect to $\zeta$ and hence we obtain $|P_{\alpha'}(\zeta)|\leq e^{h|\xi|}$ for any $\alpha'$ and sufficiently small $\varepsilon$.

Next we check that $\displaystyle \frac{\partial}{\partial \bar{z}_i} \widetilde{\varsigma}(\omega)$ belongs to $\mathfrak{N}^\infty(V')$ for any $i=1,2,\dots,n$.
By the Stoke's formula and the facts that $(\bar{\partial}_z+\bar{\partial}_{z'})\omega_{01}=\omega_1$ and $\bar{\partial}_z\omega_1=-\bar{\partial}_{z'}\omega_1$, we obtain
\begin{alignat*}{1}
\left|\bar{\partial}_z\wsigma(\omega)\right| =&\left|\int_{D_{z,2}(\varepsilon)} \bar{\partial}_z\omega_1(z,z')\cdot e^{\langle z'-z,\zeta\rangle}+\int_{E(\varepsilon)}\bar{\partial}_z\omega_{01}(z,z')\cdot e^{\langle z'-z,\zeta \rangle}\right|\\
=&\left|-\int_{D_{z,2}(\varepsilon)} \bar{\partial}_{z'}\omega_1(z,z')\cdot e^{\langle z'-z,\zeta\rangle}\right.\\
& \hspace{20mm}\left. +\int_{E(\varepsilon)}(\omega_1(z,z')-\bp_{z'} \omega_{01}(z,z'))\cdot e^{\langle z'-z,\zeta \rangle}\right|\\
=&\left|\int_{\partial D_{z,2}(\varepsilon)} \omega_1(z,z')\cdot e^{\langle z'-z,\zeta\rangle}\right.\\
&\hspace{20mm}\left.-\int_{E(\varepsilon)}\omega_1(z,z')\cdot e^{\langle z'-z,\zeta \rangle}+\int_{\partial E(\varepsilon)}\omega_{01}(z,z')\cdot e^{\langle z'-z,\zeta \rangle}\right|\\
=&\left|\int_{\partial D_{z,2}(\varepsilon)\setminus E(\varepsilon)} \omega_1(z,z')\cdot e^{\langle z'-z,\zeta\rangle}+\int_{\partial E(\varepsilon)}\omega_{01}(z,z')\cdot e^{\langle z'-z,\zeta \rangle}\right|.
\end{alignat*}
By the construction of the partitions $D_{z,1}$ and $D_{z,2}$ we obtain $\partial{D_{z,2}(\varepsilon)}\setminus E(\varepsilon)\subset {\rm Int}\,(\varrho^{-1}(\gamma_2))$ and $\partial E(\varepsilon)\subset {\rm Int}\,(\varrho^{-1}(\gamma_2))$, and hence there exist positive constants $h$ and $C$ such that
\begin{alignat*}{1}
&\left|\int_{\partial D_{z,2}(\varepsilon)\setminus E(\varepsilon)} \omega_1(z,z')\cdot e^{\langle z'-z,\zeta\rangle}+\int_{\partial E(\varepsilon)}\omega_{01}(z,z')\cdot e^{\langle z'-z,\zeta \rangle}\right|\\
\leq& \int_{\partial{D_{z,2}(\varepsilon)}\setminus E(\varepsilon)} |\omega_1(z,z')|\cdot e^{-h|\zeta|}
+ \int_{\partial E(\varepsilon)} |\omega_{01}(z,z')|\cdot e^{-h|\zeta|}\\
\leq& C\cdot e^{-h|\zeta|},
\end{alignat*}
and we obtain the claim.

\item In addition to the assumption in the above proof we assume that $\omega$ is equal to $0$ as an element in the relative \v{C}ech-Dolbeault cohomology.
Then there exists $\tau=(\tau_1,\tau_{01})\in C^{\infty,(0,n,n-1)}_{X\times X}(\mathcal{V},\mathcal{V}')$ with $\bar{\vartheta}\tau=\omega$.
By substituting $(\omega_1,\omega_{01})$ with $(\bar{\partial}\tau_1,\tau_1-\bar{\partial}\tau_{01})$ we have
\begin{alignat*}{1}
\widetilde{\varsigma}(\omega) =&
\int_{D_{z,2}(\varepsilon)} \omega_1(z,z')\cdot e^{\langle z'-z,\zeta\rangle}
+ \int_{E(\varepsilon)}\omega_{01}(z,z')\cdot e^{\langle z'-z,\zeta \rangle}\\
= &\int_{D_{z,2}(\varepsilon)} \bar{\partial}\tau_1(z,z')\cdot e^{\langle z'-z,\zeta\rangle}
+ \int_{E(\varepsilon)}(\tau_1(z,z')
- \bar{\partial}\tau_{01}(z,z'))\cdot e^{\langle z'-z,\zeta \rangle}.
\end{alignat*}
Since the integrations
\[
\int_{D_{z,2}(\varepsilon)} \bar{\partial}_z\tau_1(z,z')\cdot e^{\langle z'-z,\zeta\rangle}, \quad \int_{E(\varepsilon)} \bar{\partial}_z\tau_{01}(z,z')\cdot e^{\langle z'-z,\zeta \rangle}
\]
vanish, we have
\begin{alignat*}{1}
&\int_{D_{z,2}(\varepsilon)} \bar{\partial}\tau_1(z',w)\cdot e^{\langle z'-z,\zeta\rangle}
+ \int_{E(\varepsilon)}(\tau_1(z,z')
- \bar{\partial}\tau_{01}(z,z'))\cdot e^{\langle z'-z,\zeta \rangle}\\
=& \int_{\partial{D_{z,2}(\varepsilon)}\setminus E(\varepsilon)} \tau_1(z,z')\cdot e^{\langle z'-z,\zeta\rangle}
+ \int_{\partial E(\varepsilon)} \tau_{01}(z,z')\cdot e^{\langle z'-z,\zeta \rangle}.
\end{alignat*}
By the same argument as in the proof of $1$, we can find constants $h>0$ and $C>0$ such that
\begin{alignat*}{1}
&\left|\int_{\partial{D_{z,2}(\varepsilon)}\setminus E(\varepsilon)} \tau_1(z,z')\cdot e^{\langle z'-z,\zeta\rangle}
+ \int_{\partial E(\varepsilon)} \tau_{01}(z,z')\cdot e^{\langle z'-z,\zeta \rangle}\right| \\
\leq& \int_{\partial{D_{z,2}(\varepsilon)}\setminus E(\varepsilon)} |\tau_1(z,z')|\cdot e^{-h|\zeta|}
+ \int_{\partial E(\varepsilon)} |\tau_{01}(z,z')|\cdot e^{-h|\zeta|} \leq C\cdot e^{-h|\zeta|}.
\end{alignat*}

\item The claim follows immediately from the argument in the above proofs.

\end{enumerate}
\end{proof}

The corollary below follows from Proposition \ref{welldef}.

\begin{coro}
The map $\wsigma$ is well-defined.
\end{coro}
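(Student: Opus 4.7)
The plan is to assemble the three conclusions of Proposition \ref{welldef} into the well-definedness of the induced map. The definition of $\wsigma$ depends a priori on three choices: the representative $\omega=(\omega_1,\omega_{01})$ of the class $u\in H^{0,n,n}_{\bar{\vartheta}}(U,U\setminus G)$, the pair of domains $(D_1,D_2)$ satisfying conditions D1--D7, and (implicitly) the intermediate cones $\Gamma_1,\Gamma_2$ between $V'$ and $V$. What has to be checked is that none of these choices affects the resulting class in $\Qinf(V')$.

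First, by Proposition \ref{welldef}(1) the image $\wsigma(\omega)$ already belongs to $\mathfrak{S}^\infty(V')$, so postcomposition with the canonical projection $\mathfrak{S}^\infty(V')\to \Qinf(V')$ gives at least a well-targeted element. For independence of the choice of $(D_1,D_2)$, one takes two admissible pairs and appeals to Proposition \ref{welldef}(3): the two resulting symbols differ by an element of $\mathfrak{N}^\infty(V')$ and hence define the same class in $\Qinf(V')$. For independence of the representative, if $\omega$ and $\omega'$ both represent $u$, then $\omega-\omega'=\bar{\vartheta}\tau$ for some $\tau\in C^{\infty,(0,n,n-1)}_{X\times X}(\mathcal{V},\mathcal{V}')$; by the linearity of the integration formula together with Proposition \ref{welldef}(2) applied to $\omega-\omega'$, one gets $\wsigma(\omega)-\wsigma(\omega')\in\mathfrak{N}^\infty(V')$, which again yields equality in the quotient.

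No genuine obstacle remains at this stage, since all the analytic estimates are encapsulated in Proposition \ref{welldef} (which in turn rests on Lemma \ref{prodlem} and the Stokes-type manipulation of the integration cycles). The corollary is formal bookkeeping: the three items of Proposition \ref{welldef}, read modulo $\mathfrak{N}^\infty(V')$, precisely state that $\wsigma$ descends to a well-defined map $H^{0,n,n}_{\bar{\vartheta}}(U,U\setminus G)\to \Qinf(V')$, as required.
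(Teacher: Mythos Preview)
Your argument is correct and is exactly the unpacking of what the paper means when it says the corollary ``follows from Proposition \ref{welldef}'': item (1) gives the target, item (2) handles independence of the representative via linearity, and item (3) handles independence of the integration domains (and with them the auxiliary cones $\Gamma_1,\Gamma_2$). There is nothing to add.
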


In the next subsection we prove the main theorem.

\subsection{The proof of the main Theorem \ref{main}}

Now we show the proof of Theorem \ref{main}.
As a consequence of Subsections 4.1 and 4.2,
there exists a morphism
\[
\varsigma_{\widetilde{V}}: \shE(\widetilde{V}) \longrightarrow \Qinf(\widetilde{V})
\]
for any closed convex proper cone $\widetilde{V}$ in $\ring{T}^*X$ with $\pi(\widetilde{V})$ being compact.\par

Let $\widetilde{V}'$ be a closed convex proper cone contained in $\widetilde{V}$.
Then it follows from Proposition \ref{welldef} that the diagram below commutes:

\[
\xymatrix@R=30pt@C=20pt{
\shE(\widetilde{V}) \ar[r]^(.4){\varsigma_{\widetilde{V}}} \ar[d]& \Qinf(\widetilde{V}) \ar[d] \\
\shE(\widetilde{V}') \ar[r]^(.4){\varsigma_{\widetilde{V}'}} & \Qinf(\widetilde{V}').
}
\]
Since the family of closed convex proper cones in $\ring{T}^*X$ is a basis of sets on which a conic sheaf can be defined, the family $\{ \varsigma_{\widetilde{V}} \}_{\widetilde{V}}$ of morphisms gives a sheaf morphism on $\ring{T}^*X$
\[
\varsigma:\shE \longrightarrow \Qinf.
\]
The rest of the problem is whether the map $\varsigma$ is an isomorphism or not.
In particular it suffices to show the morphism $\varsigma_{z^*}:\mathscr{E}^\mathbb{R}_{X,z^*} \rightarrow (\Qinf)_{z^*}$ of stalks is isomorphic.
Assume that the following diagram commutes for an each point $z^*$.
(We may show it in the appendix since the proof is a little complicated.)
\begin{equation}\label{com2map}
\vcenter{
\xymatrix@R=20pt@C=50pt{
  & (\Q)_{z^*} \ar[dd]^{\iota_{z^*}} \\
\mathscr{E}^\mathbb{R}_{X,z^*} \ar[ru]^\sigma \ar[rd]_{\varsigma_{z^*}} & \\
  & (\Qinf)_{z^*}
}
}
\end{equation}

Here $\sigma$ is the symbol mapping defined by Aoki \cite{A2}.
The following theorem is essential.
\begin{theo}[\cite{A2}, Theorem 4.3 and Theorem 4.5]\label{sigmaiso}
The symbol mapping $\sigma$
\[
\sigma: \mathscr{E}^\mathbb{R}_{X,z^*} \to (\Q)_{z^*}
\]
 is an isomorphism of stalks.
\end{theo}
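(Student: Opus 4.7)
The plan is to work locally around $z^*=(0;1,0,\dots,0)\in\ring{T}^*X$, exploit the cohomological expression of Theorem~\ref{KS1}, and build $\sigma$ and its inverse by explicit Laplace-type integrals. After choosing an open convex proper cone $V\ni z^*$ whose polar $V^\circ$ is contained in a half space $\{\mathrm{Re}\,\langle v,\zeta_0\rangle>0\}$, one has
\[
\shE(V)\;=\;\ilim[U,G]H^n_{G\cap U}(U;\mathscr{O}^{(0,n)}_{X\times X}),
\]
with $C_\Delta(G)\subset V^\circ$. Using the Čech covering of $U$ by $n$ Stein open sets $\{U_j\}$ adapted to $n$ edges of a relatively compact subcone of $V^\circ$, a class $[K]\in\shEx$ is represented by an alternating $n$-cocycle $k(z,z')\in\mathscr{O}_{X\times X}(U_{01\cdots n})$ times $dz'$. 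Define
\[
\sigma([K])(z,\zeta)\;=\;\int_{\gamma(z)}k(z,z')\,e^{\langle z'-z,\zeta\rangle}\,dz',
\]
where $\gamma(z)$ is a real $n$-cycle in $p_1^{-1}(z)\cap U_{01\cdots n}$ dual to the Čech cocycle and chosen so that $\mathrm{Re}\,\langle z'-z,\zeta\rangle$ is bounded above by a small multiple of $|\zeta|$ on it when $(z;\zeta)\in V'\cSubset V$. The estimates on $\gamma(z)$ immediately give the infra-exponential bound $|\sigma([K])|\leq C e^{h|\zeta|}$, so $\sigma([K])\in\mathfrak{S}(V)$; Stokes' formula shows the result is independent of the choice of $\gamma(z)$ and of the cocycle representative modulo $\mathfrak{N}(V)$.

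For \textbf{injectivity} of $\sigma_{z^*}$, suppose $\sigma([K])$ is a null symbol, so $|\sigma([K])(z,\zeta)|\leq Ce^{-h|\zeta|}$ on a suitable wedge. The idea is to reconstruct a coboundary via the inverse Laplace transform
\[
k_j(z,z')\;=\;(2\pi\sqrt{-1})^{-n}\int_{\Gamma_j}\sigma([K])(z,\zeta)\,e^{-\langle z'-z,\zeta\rangle}\,d\zeta,
\]
where $\Gamma_j$ are suitable real $n$-cycles in $\mathbb{C}^n_\zeta$ whose union realizes a partition of unity at infinity in the cone. The exponential decay of $\sigma([K])$ makes each $k_j$ holomorphic on an appropriate piece of $U$, and the standard identity $\sum_j k_j = k$ (following from the Fourier inversion type formula for the chosen contours) exhibits $k$ as a Čech coboundary, so $[K]=0$.

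For \textbf{surjectivity}, given a symbol $f\in\mathfrak{S}(V)$ with the infra-exponential estimate on a wedge of type $V$, run the same inverse Laplace construction to produce the Čech cocycle
\[
k(z,z')\;=\;\int_\Gamma f(z,\zeta)\,e^{-\langle z'-z,\zeta\rangle}\,d\zeta,
\]
on $U_{01\cdots n}$, where $\Gamma$ is an $n$-cycle in the fiber of $T^*X$ going to infinity in a proper subcone of $V$. Absolute convergence on the relevant overlaps follows from the infra-exponential bound combined with the choice of $\Gamma$ inside the dual cone of $G$ restricted to $U\setminus\Delta$. A direct computation with Fourier–Laplace inversion yields $\sigma([K])=f$ modulo a null symbol, proving the surjectivity of $\sigma_{z^*}$.

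The main obstacle will be the combinatorial bookkeeping for the Čech cocycle $\leftrightarrow$ integration cycle correspondence: one must exhibit, for a given proper cone $V^\circ$, an explicit Stein covering of $U\setminus G$ such that (i) the fiberwise $n$-cycle $\gamma(z)$ dual to the top Čech cocycle is compact and moves holomorphically in $z$, and (ii) the inverse Laplace contour $\Gamma$ lies in the open cone dual to each overlap $U_{j_0\cdots j_k}$, so that all integrals converge absolutely and Stokes' theorem applies to identify $\sigma\circ\sigma^{-1}$ and $\sigma^{-1}\circ\sigma$ with the identity modulo null symbols and Čech coboundaries respectively. Once the geometric data (covering, cycles, contours) are matched to the polar cone $V^\circ$ and to the fixed direction $\zeta_0$ appearing in condition~C3, the two Fourier–Laplace integrals invert one another by a routine computation, completing the proof.
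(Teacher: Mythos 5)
First, a point of reference: the paper does not prove this statement at all. Theorem \ref{sigmaiso} is imported verbatim from Aoki \cite{A2} (Theorems 4.3 and 4.5), so there is no internal proof to compare against; what you have written is a reconstruction of the strategy of the cited source. On that level your outline is the right one and matches the classical Kataoka--Aoki route: represent a class of $\shEx$ by a \v{C}ech cocycle $k(z,z')\,dz'$ on the $n$-fold intersection of a Stein covering of $U\setminus G$ adapted to the polar cone, define $\sigma$ by the Laplace integral $\int_{\gamma(z)}k(z,z')e^{\langle z'-z,\zeta\rangle}dz'$ over a fiberwise cycle on which ${\rm Re}\,\langle z'-z,\zeta\rangle$ is controlled, and invert it by the quantization $k(z,z')=(2\pi\sqrt{-1})^{-n}\int_\Gamma f(z,\zeta)e^{-\langle z'-z,\zeta\rangle}d\zeta$. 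This is also consistent with how the present paper uses the result in its appendix (the cycles $\gamma_1\times\cdots\times\gamma_n$ and the honeycomb system).

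As a proof, however, the proposal has genuine gaps exactly where the content of the theorem lies, and you flag them yourself. For injectivity, the assertion that the pieces $k_j$ obtained from the decaying symbol satisfy $\sum_j k_j=k$ modulo a \v{C}ech coboundary is not a consequence of a ``Fourier inversion type formula'' alone: one must show that the exponential decay $|\sigma([K])|\le Ce^{-h|\zeta|}$ permits rotating each contour $\Gamma_j$ so that $k_j$ extends holomorphically to the overlap obtained by \emph{dropping} the $j$-th member of the covering, and then that the resulting cochain actually cobounds the original cocycle; this contour-deformation and gluing argument is the substance of Aoki's Theorem 4.5. For surjectivity, ``$\sigma([K])=f$ modulo a null symbol by a routine computation'' hides the delicate point: Fourier--Laplace inversion for symbols with only infra-exponential bounds ($|f|\le Ce^{h|\zeta|}$ for every $h>0$) does not follow from the standard $L^1$/polynomially-bounded inversion theorem, and one must estimate the error produced by truncating and shifting the $\zeta$-contour into the region where ${\rm Re}\,\langle z'-z,\zeta\rangle\ge\delta|\zeta|$, showing it is $O(e^{-h|\zeta|})$. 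Finally, the well-definedness of $\sigma$ itself (independence of the cycle $\gamma(z)$ and of the cocycle representative modulo $\mathfrak{N}$) requires the same kind of quantitative Stokes argument that the present paper carries out at length in Proposition \ref{welldef} for the $C^\infty$ analogue $\wsigma$; it is not ``immediate.'' So the architecture is correct, but the three analytic lemmas that make it work are stated rather than proved.
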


The vertical arrow in the diagram is an isomorphism by Theorem \ref{iso} and $\sigma$ is also isomorphic by the above theorem.
Therefore $\varsigma_{z^*}$ is also an isomorphism, which completes the proof of Theorem \ref{main}.

\appendix

\section{The compatibility of two symbol maps}

In the appendix we prove the commutativity of \eqref{com2map}.
Since the argument is local, we have the following identity map.
\[
T^*X \simeq X \times \mathbb{C}^n \ni (z;\zeta) = (z_1,\dots,z_n; \zeta_1,\dots,\zeta_n).
\]
Moreover assume that $X\subset \mathbb{C}^n$ is an open set with coordinate system $z=(z_1,\dots,z_n)$.

Fix the point $z^*$ so that we can consider the stalk of $\shE$ on it.
Then we regard $z^* = (0;\lambda,0\dots,0)$ for some $\lambda\in \mathbb{C}\setminus \{0\}$ without loss of generality. 
Recall that the stalk of the sheaf $\shE$ is expressed as the inductive limit of the cohomologies with suitable subsets $U$ and $G$.
\[
\mathscr{E}^\mathbb{R}_{X,z^*} = \ilim[U,G]H^n_G(U\,;\,\mathscr{O}^{(0,n)}_{X\times X}).
\]
On the other hand, in the theory of the \v{C}ech-Dolbeault representation of the stalk of $\shE$ is given in Definition \ref{CDdef} and Theorem \ref{CD-classical}.
\par
To express two cohomologies in the same class we give another \v{C}ech-Dolbeault expression.
In the second step we calculate the integration of two cohomology classes and finish the proof.

\subsection{Another \v{C}ech-Dolbeault expression of $\mathscr{E}^\mathbb{R}_X$}

We construct the \v{C}ech-Dolbeault cohomology of $\shE$ in which two cohomologies can be embedded.
(See \cite{Suwa}.)\par
Let $M$ be a closed set of codimension $m$ in $X$.
Moreover let $\mathcal{W}=\{W_i\}_{i\in I}$ and $\mathcal{W}'=\{W_i\}_{i\in I'}$ be open coverings of $X$ and $X\setminus M$, respectively.
Here $I$ and $I'$ are index sets with $I'\subset I$.
For a sheaf $\mathscr{S}$ we set
\[
C^q (\mathcal{W};\mathscr{S}) = \prod_{(\alpha_0,\alpha_1,\dots,\alpha_q)\in I^{q+1}} \mathscr{S}(W_{\alpha_0\alpha_1\dots\alpha_q}),
\]
where $W_{\alpha_0\alpha_1\dots\alpha_q} = W_{\alpha_0}\cap W_{\alpha_1}\cap \dots \cap W_{\alpha_q}$.
Note that $\sigma_{\alpha_0\dots\alpha_q} \in \mathscr{S}(W_{\alpha_0\alpha_1\dots\alpha_q})$ has the orientation, that is, the section $\sigma$ satisfies the formula
\[
\sigma_{\alpha_0\dots\alpha_i\alpha_{i+1}\dots\alpha_q} = -\sigma_{\alpha_0\dots\alpha_{i+1}\alpha_i\dots\alpha_q}.
\]
This implies that we have $\sigma = 0$ if $\alpha_i = \alpha_j$ for $i\neq j$.
The complex $(C^\bullet(\mathcal{W};\mathscr{S}),\delta)$ is called the \v{C}ech complex with coefficients in $\mathscr{S}$.
The coboundary operator $\delta$ is defined by
\[
(\delta \sigma)_{\alpha_0\alpha_1\dots\alpha_{q+1}} = \sum^{q+1}_{k=0} (-1)^k \sigma_{\alpha_0\dots\widehat{\alpha_k}\dots\alpha_{q+1}}.
\]
Here we set $\sigma_{\alpha_0\dots\widehat{\alpha_k}\dots\alpha_{q+1}} = \sigma_{\alpha_0\dots \alpha_{k-1} \alpha_{k+1}\dots\alpha_{q+1}}$.
We also define the relative \v{C}ech complex $(C^\bullet(\mathcal{W},\mathcal{W}';\mathscr{S}),\delta)$ as follows.
\[
C^q(\mathcal{W},\mathcal{W}';\mathscr{S}) = \{ \sigma\in C^q(\mathcal{W};\mathscr{S})\mid \sigma_{\alpha_0\dots\alpha_q} = 0 \text{ if } \alpha_0,\alpha_1,\dots,\alpha_q\in I' \}.
\]
Hereafter we denotes by $\mathscr{F}^q=C^{\infty,(0,n,q)}_{X\times X}$ until the end of this appendix.
Recall that we have the fine resolution of $\mathscr{O}^{(0,n)}_{X\times X}$:
\[
0 \longrightarrow \mathscr{O}^{(0,n)}_{X\times X} \longrightarrow \mathscr{F}^0 \overset{\bar{\partial}}{\longrightarrow} \mathscr{F}^1 \overset{\bar{\partial}}{\longrightarrow} \cdots \overset{\bar{\partial}}{\longrightarrow} \mathscr{F}^{2n} \longrightarrow 0.
\]

This induces the following double complex:
\[
\xymatrix@R=25pt@C=35pt{
 & \vdots \ar[d]^\delta & \vdots \ar[d]^\delta & \\
\cdots \ar[r]^(.3){(-1)^{q_1}\bar{\partial}} & C^{q_1}(\mathcal{W},\mathcal{W}';\mathscr{F}^{q_2}) \ar[r]^{(-1)^{q_1}\bar{\partial}} \ar[d]^\delta & C^{q_1}(\mathcal{W},\mathcal{W}';\mathscr{F}^{q_2+1}) \ar[d]^\delta \ar[r]^(.7){(-1)^{q_1}\bar{\partial}} & \cdots \\
\cdots \ar[r]^(.3){(-1)^{q_1+1}\bar{\partial}} & C^{q_1+1}(\mathcal{W},\mathcal{W}';\mathscr{F}^{q_2}) \ar[d]^(.4)\delta \ar[r]^{(-1)^{q_1+1}\bar{\partial}} & C^{q_1+1}(\mathcal{W},\mathcal{W}';\mathscr{F}^{q_2+1}) \ar[d]^(.4)\delta \ar[r]^(.7){(-1)^{q_1+1}\bar{\partial}} & \cdots \\
 & \vdots & \vdots & .
}
\]
We consider the associated single complex $(\mathscr{F}^\bullet(\mathcal{W},\mathcal{W}'),D)$ as follows.
\[
\mathscr{F}^q(\mathcal{W},\mathcal{W}') =\bigoplus_{q_1+q_2=q} C^{q_1}(\mathcal{W},\mathcal{W}';\mathscr{F}^{q_2}),\quad D=\delta+(-1)^{q_1}\bar{\partial}.
\]
\begin{defi}
The \v{C}ech-Dolbeault cohomology $H^q(\mathcal{W},\mathcal{W}';\mathscr{F}^\bullet)$ of $(\mathcal{W},\mathcal{W}')$ with coefficients in $\mathscr{F}^\bullet$ is the $q$-th cohomology of $(\mathscr{F}^\bullet(\mathcal{W},\mathcal{W}'),D)$.
\end{defi}

We describe the differential $D$ a few more details.
A cochain $\xi\in \mathscr{F}^q(\mathcal{W},\mathcal{W}')$ may be expressed as $\xi = (\xi^{(q_1)})_{0\leq q_1\leq q}$ with $\xi^{(q_1)} \in C^{q_1}(\mathcal{W},\mathcal{W}';\mathscr{F}^{q-q_1})$.
Then $D:\mathscr{F}^q(\mathcal{W},\mathcal{W}')\rightarrow\mathscr{F}^{q+1}(\mathcal{W},\mathcal{W}')$ is given by
\[
(D\xi)^{(q_1)} = \delta \xi^{(q_1-1)} + (-1)^{q_1} \bar{\partial} \xi^{q_1},\quad 0\leq q_1\leq q+1,
\]
where we set $\xi^{(-1)}=\xi^{(q+1)}=0$.

\begin{rema}
Assume $\mathcal{W}$ consisting of two open sets $X$ and $X\setminus M$, and $\mathcal{W}'$ consisting of only one open set $X\setminus M$.
Then the \v{C}ech-Dolbeault cohomology $H^q(\mathcal{W},\mathcal{W}';\mathscr{F}^\bullet)$ corresponds to the \v{C}ech-Dolbeault cohomology defined in Section 2.
\end{rema}

\subsection{Two cohomological expressions in the \v{C}ech-Dolbeault cohomology}

The goal of this subsection is to express two cohomologies in the same cohomology class.
For more details of \v{C}ech representation, see \cite{A2} and \cite{SKK}.
\par

By the discussion so far we can get three cohomological expressions $H^n (\mathscr{O}^{(0,n)}_{X\times X}; \mathcal{W},\mathcal{W}')$, $H^{0,n,n}_{\bar{\vartheta}}(\mathcal{V},\mathcal{V}')$ and $H^{0,n,n}_{\bar{\vartheta}}(\mathcal{W},\mathcal{W}')$ of $\mathscr{E}^\mathbb{R}_{X,z^*}$.
Set $U_r$ and $G_{r,\varepsilon}$ as follows.
\begin{alignat*}{1}
U_r = & \{ (z,z') \in X\times X \mid |z|<r, |z'-z|<r \}, \\
G_{r,\varepsilon} = & \{ (z,z') \in U_r \mid |z'_1-z_1|\geq \varepsilon |z'_i-z_i| \ (2 \leq i \leq n), \\
 & \hspace{20mm}  - {\rm Re}\,(\lambda (z'_1-z_1))\geq \varepsilon|{\rm Im}\,(\lambda (z'_1-z_1))|\  \}.
\end{alignat*}
Thanks to the study of Kashiwara-Kawai \cite{KK} we have
\begin{equation}\label{cohomologyEX}
\shEx = \ilim[\substack{r \rightarrow 0 \\ \varepsilon \rightarrow 0}]H^n_{G_{r,\varepsilon}}(U_r;\mathscr{O}^{(0,n)}_{X\times X}).
\end{equation}
We write $U$ and $G$ instead of $U_r$ and $G_{r,\varepsilon}$ respectively if there is no risk of confusion.
We also set several open sets as follows.
\begin{alignat*}{1}
V_0 &= W_0 = U,\quad V_1 = U\setminus G,\quad V_{01} = V_0\cap V_1 = U\setminus G, \\
W_1 &= \{ (z,z') \in U \mid -{\rm Re}\,(\lambda (z'_1-z_1)) < \varepsilon|{\rm Im}\, (\lambda (z'_1-z_1))| \}, \\
W_i &= \{ (z,z') \in U \mid |z'_1-z_1| < \varepsilon |z'_i-z_i| \}\quad (2\leq i \leq n).
\end{alignat*}
We can easily see that the followings are open coverings of $U$:
\[
\mathcal{V} = \{V_0,V_1\},\quad
\mathcal{W} = \{W_0,W_1,\dots,W_n\},
\]
and the followings are open coverings of $U\setminus G$:
\[
\mathcal{V}' = \{V_1\},\quad \mathcal{W}' = \{W_1,W_2,\dots,W_n\}.
\]

Fix small $r$ and $\varepsilon$.
We compute the cohomology $H^n_{G}(U;\mathscr{O}^{(0,n)}_{X\times X})$ by applying the \v{C}ech cohomology with respect to the \v{C}ech coverings $( \mathcal{W},\mathcal{W}' )$.
Since $\mathcal{W}$ and $\mathcal{W}'$ are Stein coverings, by Leray's theorem we have the exact sequence
\[
\bigoplus^n_{i=1}\Gamma(W_{\hat{i}} ; \mathscr{O}^{(0,n)}_{X\times X}) \longrightarrow \Gamma (W;\mathscr{O}^{(0,n)}_{X\times X}) \longrightarrow H^n_{G}(U;\mathscr{O}^{(0,n)}_{X\times X}) \longrightarrow 0,
\]
where $\displaystyle W_{\hat{i}} = \bigcap_{j\neq i} W_j$ and $\displaystyle W = \bigcap^n_{j=1} W_j$.
Hence $P\in H^n_{G}(U;\mathscr{O}^{(0,n)}_{X\times X})$ is represented by some holomorphic form $\psi = \psi(z,z'-z)dz' \in \Gamma (W;\mathscr{O}^{(0,n)}_{X\times X})$ such that
\[
P = [\psi(z,z'-z)dz'].
\]
We denote by $H^n(\mathscr{O}^{(0,n)}_{X\times X}; \mathcal{W},\mathcal{W}')$ the \v{C}ech cohomology with respect to the coverings $(\mathcal{W},\mathcal{W}')$.
\par

Next we recall the \v{C}ech-Dolbeault expression of $H^n_{G}(U;\mathscr{O}^{(0,n)}_{X\times X})$ with respect to coverings $(\mathcal{V},\mathcal{V}')$.
By the results in Section 3, we have
\[
H^n_{G}(U;\mathscr{O}^{(0,n)}_{X\times X}) = H^{0,n,n}_{\bar{\vartheta}}(\mathcal{V},\mathcal{V}').
\]
Moreover $P\in H^n_{G}(U;\mathscr{O}^{(0,n)}_{X\times X})$ is represented by some $\omega = (\omega_0,\omega_{01}) \in C^{\infty,(0,n;n)}_{X\times X}(\mathcal{V},\mathcal{V}')$ such that
\[
P = [\omega] = [(\omega_0,\omega_{01})].
\]

\begin{rema}
While in Section 3 we set $\mathcal{V}' = \{ V_0 \}$ and the representative of an element of \v{C}ech-Dolbeault cohomology is represented by the pair $(\omega_1,\omega_{01})$,
in this section we set $\mathcal{V}' = \{ V_1 \}$.
Therefore the index of first term of $\omega = (\omega_0,\omega_{01})$ is different from the one in Section 3.
\end{rema}

Finally by applying $(\mathcal{W},\mathcal{W}')$ to the previous subsection, we have the \v{C}ech-Dolbeault cohomology $\displaystyle H^{0,n,n}_{\bar{\vartheta}}(\mathcal{W},\mathcal{W}')$, which is isomorphic to $H^n_G(U;\mathscr{O}^{(0,n)}_{X\times X})$.

\[
\xymatrix@R=30pt@C=35pt{
  & [\omega] \in H^{0,n,n}_{\bar{\vartheta}}(\mathcal{V},\mathcal{V}') \ar[rd]^{\widetilde{\phi}_2} & \\
P\in H^n_{G}(U;\mathscr{O}^{(0,n)}_{X\times X}) \ar[ru] \ar[rd] & & H^{0,n,n}_{\bar{\vartheta}}(\mathcal{W},\mathcal{W}') .\\
  & [\psi] \in H^n (\mathscr{O}^{(0,n)}_{X\times X}; \mathcal{W},\mathcal{W}') \ar[ru]^{\widetilde{\phi}_1} &
}
\]

We introduce the maps $\widetilde{\phi}_1,\widetilde{\phi}_2$.
Let $\phi_1$ be the inclusion map
\[
\phi_1:\Gamma(W;\mathscr{O}^{(0,n)}_{X\times X})    \hookrightarrow C^n (\mathcal{W},\mathcal{W}'; \mathscr{F}^{0}) \subset \mathscr{F}^n(\mathcal{W},\mathcal{W}'),
\]
where $\displaystyle W = \bigcap^n_{i = 1} W_i$, and define
\[
\phi_2: \mathscr{F}^n(\mathcal{V},\mathcal{V}') \to C^0(\mathcal{W},\mathcal{W}';\mathscr{F}^n) \oplus C^1(\mathcal{W},\mathcal{W}'\mathscr{F}^{n-1}) \subset \mathscr{F}^n(\mathcal{W},\mathcal{W}')
\]
by
\begin{align*}
\phi_2(\omega)_\alpha =
\begin{cases}
\omega_0|_{W_\alpha} \quad &\alpha=0 \\
0 \quad &\alpha =1,\dots,n
\end{cases}
\quad , \quad
\phi_2(\omega)_{\alpha\beta} =
\begin{cases}
\omega_{01}|_{W_{\alpha\beta}} \quad &(\alpha,\beta) = (0,1),\dots,(0,n)\\
-\omega_{01}|_{W_{\alpha\beta}} \quad &(\alpha,\beta) = (1,0),\dots,(n,0)\\
0 \quad &\mathrm{otherwise}
\end{cases}.
\end{align*}
Then $\phi_1$ induces the morphism between cohomologies
\[
\widetilde{\phi}_1: H^n(\mathscr{O}^{(0,n)}_{X\times X};\mathcal{W},\mathcal{W}') \longrightarrow H^{0,n,n}_{\bar{\vartheta}}(\mathcal{W},\mathcal{W}'),
\]
and $\phi_2$ induces the morphism between cohomologies
\[
\widetilde{\phi}_2 : H^{0,n,n}_{\bar{\vartheta}}(\mathcal{V},\mathcal{V}') \longrightarrow H^{0,n,n}_{\bar{\vartheta}}(\mathcal{W},\mathcal{W}').
\]
\begin{theo}[\cite{Suwa}, Theorem 3.11, Theorem 4.5]
The induced morphism $\widetilde{\phi}_1$ and $\widetilde{\phi}_2$ are isomorphisms.
\end{theo}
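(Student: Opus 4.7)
The plan is to realize both $\widetilde{\phi}_1$ and $\widetilde{\phi}_2$ as canonical comparison maps between three models of the same local cohomology $H^n_G(U;\mathscr{O}^{(0,n)}_{X\times X})$. The key input throughout is that $\mathcal{W}=\{W_0,\ldots,W_n\}$ is a Stein covering of $U$, $\mathcal{W}'=\{W_1,\ldots,W_n\}$ is a Stein covering of $U\setminus G$, and every multiple intersection $W_{\alpha_0\cdots\alpha_q}$ is Stein; the same facts hold for $\mathcal{V},\mathcal{V}'$.

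For $\widetilde{\phi}_1$, I would run the spectral sequence of the double complex $K^{q_1,q_2}=C^{q_1}(\mathcal{W},\mathcal{W}';\mathscr{F}^{q_2})$ filtered by the Dolbeault degree $q_2$. Since each intersection appearing in $C^{q_1}(\mathcal{W},\mathcal{W}';-)$ is Stein, the Dolbeault lemma gives
\[
H^{q_2}_{\bar\partial}\bigl(\mathscr{F}^\bullet(W_{\alpha_0\cdots\alpha_{q_1}})\bigr)=
\begin{cases} \mathscr{O}^{(0,n)}_{X\times X}(W_{\alpha_0\cdots\alpha_{q_1}}) & (q_2=0),\\ 0 & (q_2>0).\end{cases}
\]
Hence the $E_1$-page collapses to the relative \v{C}ech complex $C^{q_1}(\mathcal{W},\mathcal{W}';\mathscr{O}^{(0,n)}_{X\times X})$, and the $E_2$-page gives $H^{q_1}(\mathscr{O}^{(0,n)}_{X\times X};\mathcal{W},\mathcal{W}')$. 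A direct inspection of the resulting edge homomorphism shows that it is realized at the cochain level by the inclusion $\phi_1$, so $\widetilde{\phi}_1$ is an isomorphism.

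For $\widetilde{\phi}_2$, I would invoke the refinement principle. Note that $V_0=W_0=U$ and $V_1=W_1\cup\cdots\cup W_n=U\setminus G$, so $\mathcal{W}$ refines $\mathcal{V}$ compatibly with the subcoverings $\mathcal{V}',\mathcal{W}'$. Comparing $\bar{\vartheta}$ on the two-element covering with $D=\delta+(-1)^{q_1}\bar{\partial}$ on the \v{C}ech--Dolbeault double complex for $\mathcal{W}$, the explicit formula for $\phi_2$ (placing $\omega_0$ in the $W_0$-slot, and $\omega_{01}$ in the edge slots $(W_0,W_j)$ for $j\geq 1$ with signs reflecting the orientation) is precisely the refinement cochain map. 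An argument parallel to the one for $\widetilde{\phi}_1$ identifies $H^{0,n,n}_{\bar{\vartheta}}(\mathcal{W},\mathcal{W}')$ with $H^n_G(U;\mathscr{O}^{(0,n)}_{X\times X})$, and combined with Theorem \ref{CD-classical} the composition $\widetilde{\phi}_2$ is identified with the canonical identification between two models of the same local cohomology. Hence $\widetilde{\phi}_2$ is an isomorphism.

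The main obstacle I foresee is the spectral-sequence collapse at $E_1$ carried out in the \emph{relative} setting: one must verify that the vanishing condition \textquotedblleft$\sigma_{\alpha_0\cdots\alpha_q}=0$ whenever all indices lie in $I'$\textquotedblright{} is preserved by the vertical $\bar{\partial}$-direction, so that the Dolbeault acyclicity on Stein intersections descends to give acyclicity of the columns of the relative double complex. Once this is confirmed and the explicit edge homomorphism for $\widetilde{\phi}_1$ (resp. the refinement prescription for $\widetilde{\phi}_2$) is matched with the concretely defined $\phi_1$ (resp. $\phi_2$), both claims follow by standard homological algebra; the remaining work is sign bookkeeping in $D=\delta+(-1)^{q_1}\bar{\partial}$ and in the orientation convention $\sigma_{\cdots\alpha_i\alpha_{i+1}\cdots}=-\sigma_{\cdots\alpha_{i+1}\alpha_i\cdots}$.
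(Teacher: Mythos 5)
The paper does not actually prove this statement: it is imported verbatim from Suwa's work on relative Dolbeault cohomology (Theorems 3.11 and 4.5 of \cite{Suwa}) and used as a black box, so there is no in-paper argument to compare against. Your sketch is the standard proof of exactly those two theorems and is essentially sound. For $\widetilde{\phi}_1$: filtering the double complex $C^{q_1}(\mathcal{W},\mathcal{W}';\mathscr{F}^{q_2})$ so that the $\bar{\partial}$-cohomology is taken first, the Dolbeault--Grothendieck lemma on the Stein intersections $W_{\alpha_0\cdots\alpha_{q_1}}$ kills everything in positive $q_2$; your worry about the relative condition is harmless because $\bar{\partial}$ acts componentwise on the product, so the relative subcomplex is itself a product of acyclic columns over the multi-indices not contained in $I'$. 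The $E_1$-page is then the relative \v{C}ech complex with coefficients in $\mathscr{O}^{(0,n)}_{X\times X}$, the sequence degenerates, and the edge map is realized by $\phi_1$. For $\widetilde{\phi}_2$: your identification of $\phi_2$ with the refinement cochain map attached to $\rho(0)=0$, $\rho(j)=1$ $(j\geq 1)$ is correct, including the sign $\phi_2(\omega)_{j0}=-\omega_{01}$ coming from $\omega_{10}=-\omega_{01}$. The one step you assert rather than establish is that the triangle of identifications $H^{0,n,n}_{\bar{\vartheta}}(\mathcal{V},\mathcal{V}')\simeq H^n_G(U;\mathscr{O}^{(0,n)}_{X\times X})\simeq H^{0,n,n}_{\bar{\vartheta}}(\mathcal{W},\mathcal{W}')$ commutes with $\widetilde{\phi}_2$; either one checks this compatibility of the Leray/comparison isomorphisms with refinement by a short diagram chase, or one proves refinement invariance of the relative \v{C}ech--Dolbeault cohomology directly via a cochain homotopy (which is how \cite{Suwa} phrases it). With that point filled in, your argument is a complete and self-contained substitute for the citation.
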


Now we work in $H^{0,n,n}_{\bar{\vartheta}}(\mathcal{W},\mathcal{W}')$.
Fix $P \in H^n_G(U;\mathscr{O}^{(0,n)}_{X\times X})$.
Then we obtain two representatives
\[
P = [\psi] \in H^n (\mathscr{O}^{(0,n)}_{X\times X}; \mathcal{W},\mathcal{W}'),\quad P = [\omega] \in H^{0,n,n}_{\bar{\vartheta}}(\mathcal{V},\mathcal{V}').
\]
These are clearly equivalent to each other in $H^{0,n,n}_{\bar{\vartheta}}(\mathcal{W},\mathcal{W}')$, and hence there exists $\eta \in \mathscr{F}^{n-1}(\mathcal{W},\mathcal{W}')$ such that
\[
\widetilde{\phi}_2(\omega) - \widetilde{\phi}_1(\psi) = D\eta.
\]

Here note that $D = \delta + (-1)^n \bar{\partial}$ and this condition can be described concretely with $\eta = (\eta_0,\eta_1,\dots,\eta_n)$ as follows.

\begin{equation*}
\left\{
\begin{aligned}
\widetilde{\phi}_2(\omega_0) &= \bar{\partial} \eta_0, \\
\widetilde{\phi}_2(\omega_{01}) &= \delta \eta_0 - \bar{\partial} \eta_1, \\
0 &= \delta \eta_1 + \bar{\partial}\eta_2, \\
&\vdots \\
0 &= \delta \eta_{n-2} + (-1)^{n-1} \bar{\partial} \eta_{n-1}, \\
-\widetilde{\phi}_1(\psi) &= \delta\eta_{n-1}.
\end{aligned}
\right.
\end{equation*}

\subsection{The proof of the commutativity of \eqref{com2map}}

Now we are ready to prove the commutativity of \eqref{com2map}.
Let $w=z'-z$ and $(z,w) = (z_1,\dots,z_n,w_1,\dots,w_n)$ be a local coordinate system of $X \times X \subset \mathbb{C}^n\times \mathbb{C}^n$.
We define several canonical projections as follow.
\begin{alignat*}{1}
q_i: & \mathbb{C}_{n+1} \times \cdots\times \mathbb{C}_{2n} \to \mathbb{C}_{n+i}, \\
q_{ij}: & \mathbb{C}_{n+1}\times \cdots\times \mathbb{C}_{2n} \to \mathbb{C}_{n+i} \times \mathbb{C}_{n+j}.
\end{alignat*}

Let $r$ and $\varepsilon$ be the same as those that appeared in \eqref{cohomologyEX}.
We construct the paths $\gamma_1 \subset \mathbb{C}$ and $\gamma_i \subset\mathbb{C}^2\  (i=2,\dots,n)$ of integration as follows.
(For more details, see \cite{A2}.)
\begin{enumerate}
\item Let $\beta_0,\beta_1$ be sufficiently small complex numbers satisfying
\begin{align*}
& 0 < \mathrm{Re}\, \lambda\beta_0 < \varepsilon \mathrm{Im}\, \lambda\beta_0 ,\\
& 0 < \mathrm{Re}\, \lambda\beta_1 < -\varepsilon \mathrm{Im}\, \lambda\beta_1.
\end{align*}
Then we choose $\gamma_1 \subset \mathbb{C}_{n+1}$ as the path that goes counterclockwise around the origin from $\beta_0$ to $\beta_1$.
\item Fix $w_1\in \gamma_1$.
For $i=2,\dots,n$ we take a sufficiently small $\delta>0$ and set
\[
\gamma_{i} = \left\{ w_i \in \mathbb{C}_{n+i} \, \middle| \,|w_i| = \frac{|w_1|}{\varepsilon} + \delta \right\}.
\]
\end{enumerate}

We also define the domains of integration $B_1,B_2,\dots,B_n$.
The domain $B_1$ satisfies the following conditions:
\begin{enumerate}
\item $B_1\subset \mathbb{C}_{n+1}$ is contractible.
\item The boundary $\partial_1 B_1$ contains $\gamma_1$.
Here $\partial_1$ is the boundary operator taken in $\mathbb{C}_{n+1}$.
\item There exists a positive constant $k$ such that
\[
\sup \left\{ \frac{\langle \lambda, w_1 \rangle}{|\lambda||w_1|} \, \middle| \, w_1 \in (\partial_1 B_1 \setminus \gamma_1) \right\} < -k.
\]
\end{enumerate}

For $i=2,3,\dots,n$, set $B_i\subset \mathbb{C}_{n+1} \times \mathbb{C}_{n+i}$ as follows:\par
\begin{enumerate}
\item The domain $B_i$ is contractible.
\item If we fix $w_1\in \gamma_1$ the boundary $\partial_i B_i$ is equal to $\gamma_i$.
Here $\partial_i$ is the boundary operator taken in $\mathbb{C}_{n+i}$.
\end{enumerate}
Moreover we assume that the orientation of $\partial_i B_i$ is the same as that of $\gamma_i$.
In the following we consider $z=(z_1,\dots,z_n)$ as the parameter.
Then the family $\displaystyle \{R_i \}^n_{i=0}$ of closed sets defined below is the honeycomb system adapted to $\mathcal{W}$ with respect to the variable $w$.
\begin{alignat*}{1}
R_0 &= q^{-1}_1(B_1) \cap \left( \bigcap^n_{k = 2} q^{-1}_{1k}(B_k) \right) ,  \quad R_1 = (U\setminus q^{-1}_1(\mathrm{int}\,B_1)) \cap \left( \bigcap^n_{k = 2} q^{-1}_{1k}(B_k) \right) ,  \\
R_i &=  (U\setminus q^{-1}_{1i}(\mathrm{int}\,B_i))\cap \left( \bigcap^n_{k = i+1} q^{-1}_{1k}(B_k) \right) \quad(i = 2,\dots, n-1),\\
R_n &= U\setminus q^{-1}_{1n}(\mathrm{int}\,B_n).
\end{alignat*}

\begin{figure}[H]
\centering
\begin{tikzpicture}

\draw[->] (0,0) -- (-1,-0.2);
\draw (-1,-0.2) node[below]{$\bar{\lambda}$};

\draw[densely dotted] (0,0)--(1.2,3.6);
\draw[densely dotted] (0,0)--(2.3,-2.3);

\draw[densely dotted] (0,0) -- (-3.5,-0.7);

\draw[densely dotted] (0.7,-3.5) -- (-0.8,4);

\draw[densely dotted] (0.2,3) to [out =240 ,in = 75] (-0.2,2);
\draw[blue] (-0.2,2) to [out=255, in = 120]  (0.7,-1.8);
\draw[densely dotted] (0.7,-1.8) to [out = 300 , in = 130] (1.2,-2.5);

\draw (0.2,3) node[above]{$\beta'_1$};
\draw (1.2,-2.5) node[below]{$\beta'_0$};

\fill (0.2,3) circle [radius=1pt];
\fill (1.2,-2.5) circle [radius=1pt];

\draw (-0.26,2.1) node[above]{$\beta_1$};
\draw (0.7,-1.8) node[below]{$\beta_0$};

\fill (-0.2,2) circle [radius=1pt];
\fill (0.7,-1.8) circle [radius=1pt];

\draw (0.2,3) to [out = 330, in = 30] (1.2,-2.5);
\draw (1.2,-2.5) to [out = 210, in = 280] (-3,-0.6);
\draw (-3,-0.6) to [out = 100, in = 150] (0.2,3);

\draw (-0.3,0.3) node[left]{$\gamma_1$};

\end{tikzpicture}
\begin{tikzpicture}

\fill[blue!30] (0.2,3) to [out =240 ,in = 75] (-0.2,2) to [out=255, in = 120]  (0.7,-1.8) to [out = 300 , in = 130] (1.2,-2.5) to [out = 30, in = 330] (0.2,3);

\draw[->] (0,0) -- (-1,-0.2);
\draw (-1,-0.2) node[below]{$\bar{\lambda}$};

\draw[densely dotted] (0,0)--(1.2,3.6);
\draw[densely dotted] (0,0)--(2.3,-2.3);

\draw[densely dotted] (0,0) -- (-3.5,-0.7);

\draw[densely dotted] (0.7,-3.5) -- (-0.8,4);

\draw[densely dotted] (0.2,3) to [out =240 ,in = 75] (-0.2,2);
\draw[blue] (-0.2,2) to [out=255, in = 120]  (0.7,-1.8);
\draw[densely dotted] (0.7,-1.8) to [out = 300 , in = 130] (1.2,-2.5);

\draw (0.2,3) node[above]{$\beta'_1$};
\draw (1.2,-2.5) node[below]{$\beta'_0$};

\fill (0.2,3) circle [radius=1pt];
\fill (1.2,-2.5) circle [radius=1pt];

\draw (-0.26,2.1) node[above]{$\beta_1$};
\draw (0.7,-1.8) node[below]{$\beta_0$};

\fill (-0.2,2) circle [radius=1pt];
\fill (0.7,-1.8) circle [radius=1pt];

\draw (0.2,3) to [out = 330, in = 30] (1.2,-2.5);
\draw (1.2,-2.5) to [out = 210, in = 280] (-3,-0.6);
\draw (-3,-0.6) to [out = 100, in = 150] (0.2,3);

\draw (1.6,1.6) node[right]{$B_1$};

\end{tikzpicture}
\caption{$\gamma_1$ and $B_1$}
\end{figure}

We set
\begin{align*}
I &= \{1,2,\dots,n\} \\
I^{(r)} &= \{ \alpha^{(r)} = (\alpha_1,\dots,\alpha_r)\in I^r \mid \alpha_1 \leq \dots \leq \alpha_r \}.
\end{align*}
For simplicity we adopt the following notations.
Let $k$ be an integer with $0\leq k \leq n$ and $\alpha^{(r)} = (\alpha_1,\dots,\alpha_r) \in I^{(r)}$.
Then $(0,\alpha_1,\dots,\alpha_r)$ is also denoted by $0\alpha^{(r)}$ and $(\alpha_1,\dots,\alpha_r,k)$ is also denoted by $\alpha^{(r)}k$.
Set $\alpha^{(r)}_{\check{i}} = (\alpha_1,\dots,\alpha_{i-1},\alpha_{i+1},\dots,\alpha_r)$.\par

For any $\alpha^{(r)} \in I^{(r)}$ one sets $R_{\alpha^{(r)}}$ by
\[
R_{\alpha^{(r)}} = \underset{i}{\bigcap} R_{i},
\]
where $i$ ranges through all the components of $\alpha^{(r)}$.
The order of the subscript in the honeycomb system means its orientation.
That is, for any permutation $\rho$ we have
\[
R_{\alpha^{(r)}} = \mathrm{sgn}\, \rho \cdot R_{\rho(\alpha^{(r)})}.
\]

\begin{rema}\label{rmkdiff}
We have to be careful when we take the boundary of $R_{0\alpha^{(r)}}$ with respect to the first variable $w_1$.
By the recipe of honeycomb system $\{ R_i \}^n_{i=0}$ it can be easily shown that for any $i\neq 1$
\[
\partial_i R_{0\alpha^{(r)}} = R_{0\alpha^{(r)}i}.
\]
On the other hand if $i=1$ this does not hold because the reminder domain $\partial_1 R_{0\alpha^{(r)}} \setminus  R_{0\alpha^{(r)}1}$ does not vanish.
Here we give the property of $\partial_1 R_{0\alpha^{(r)}} \setminus  R_{0\alpha^{(r)}1}$.
The definition of $B_1$ implies that there exists a positive constant $k$ such that
\[
\sup \left\{ \frac{\langle \lambda, w \rangle}{|\lambda||w|} \, \middle| \, w\in \partial_1 R_{0\alpha^{(r)}} \setminus  R_{0\alpha^{(r)}1} \right\} < -k.
\]
By noticing this fact that we can see that the integration on the remainder domain is in $\mathfrak{N}^\infty$ class.
In fact, if $\alpha^{(r)}$ contains $1$ the boundary of $R_{0\alpha^{(r)}}$ in $\mathbb{C}_1$ consists of two points $\beta'_0,\beta'_1$.
\[
\partial_1 R_{0\alpha^{(r)}} = q^{-1}_1(\{w_1 \mid w_1=\beta'_0-\beta'_1\}) \cap  R_{0\alpha^{(r)}}.
\]
Hence there exists a positive constant $k_1$ such that
\[
\sup \left\{ \frac{\langle \lambda, w \rangle}{|\lambda||w|} \, \middle| \, w \in \partial_1 R_{0\alpha^{(r)}} \right\} < -k_1.
\]
Also if $\alpha^{(r)}$ does not contain $1$ by the same reason there exists a positive constant $k_2$ such that
\[
\sup \left\{ \frac{\langle \lambda, w \rangle}{|\lambda||w|} \, \middle| \, w \in \partial_1 R_{0\alpha^{(r)}} \setminus  R_{0\alpha^{(r)}1} \right\} < -k_2.
\]
\end{rema}

Recall that $p_1$ is the canonical projection
\[
p_1 : \mathbb{C}^n \times \mathbb{C}^n \to \mathbb{C}^n,\quad (z_1,\dots,z_n,w_1,\dots,w_n) \mapsto (z_1,\dots,z_n).
\]
We abbreviate $R_{\alpha^{(r)}} \cap p^{-1}_1(z)$ to $R_{\alpha^{(r)},z}$ for short.

\begin{defi}
Let $A$ and $B$ be two symbols of $C^\infty$ type.
One writes
\[
A \approx B
\]
if and only if $A-B$ is a null symbol of $C^\infty$ type.
\end{defi}

\begin{theo}\label{NisoA}
Let $\eta = (\eta_0,\eta_1,\dots,\eta_{n-1})$ belong to $\mathrm{Ker}\,(\mathscr{F}^{n-1}(\mathcal{W},\mathcal{W}') \overset{D}{\to} \mathscr{F}^n(\mathcal{W},\mathcal{W}'))$.
Then for any $1\leq r \leq n-2$,
\[
\sum_{\alpha^{(r)}\in I^{(r)}} \int _{R_{0\alpha^{(r)},z} } (\bar{\partial} \eta_r)^{0\alpha^{(r)}} \cdot e^{\langle w,\zeta\rangle} \approx - \sum_{\alpha^{(r+1)}\in I^{(r+1)}} \int _{R_{0\alpha^{(r+1)},z}} (\bar{\partial} \eta_{r+1})^{0\alpha^{(r+1)}} \cdot e^{\langle w,\zeta\rangle}.
\]
\end{theo}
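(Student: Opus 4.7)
\medskip

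The plan is to express the left-hand side as a sum of boundary integrals via Stokes' theorem, then regroup the boundary pieces and use the cocycle relation $D\eta=0$ to recognize the resulting expression as the right-hand side. First I would fix $\alpha^{(r)}\in I^{(r)}$ and apply Stokes to
\[
\int_{R_{0\alpha^{(r)},z}}(\bar\partial\eta_r)^{0\alpha^{(r)}}\cdot e^{\langle w,\zeta\rangle},
\]
noting that $\bar\partial(e^{\langle w,\zeta\rangle})=0$ (so $e^{\langle w,\zeta\rangle}$ may be pulled through $\bar\partial$) and that on the fibre $p_1^{-1}(z)$ only the $w$-differentiation survives. This converts the integral into $\int_{\partial R_{0\alpha^{(r)},z}}\eta_r^{0\alpha^{(r)}}\cdot e^{\langle w,\zeta\rangle}$.

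Next I would decompose the boundary in the honeycomb sense. By the construction of the $R_i$'s, for each index $i\in I\setminus\{\alpha_1,\dots,\alpha_r\}$ the piece of $\partial R_{0\alpha^{(r)},z}$ that lies in a wall of the $i$-th cell equals $\pm R_{0\alpha^{(r)}i,z}$ with a sign determined by how $i$ is inserted into the ordered tuple $0\alpha^{(r)}$. The only extra contribution is the remainder $\partial_1 R_{0\alpha^{(r)},z}\setminus R_{0\alpha^{(r)}1,z}$ described in Remark \ref{rmkdiff}: on this set one has $\mathrm{Re}\,\langle\lambda,w\rangle<-k|\lambda||w|$, so the integrand and all its $z,\bar z$ derivatives are bounded by $C e^{-h|\zeta|}$, which means the remainder contribution lies in $\mathfrak N^\infty(V')$. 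Hence, modulo $\approx$,
\[
\int_{R_{0\alpha^{(r)},z}}(\bar\partial\eta_r)^{0\alpha^{(r)}}e^{\langle w,\zeta\rangle}\;\approx\;\sum_{i\notin\alpha^{(r)}}\epsilon(i,\alpha^{(r)})\int_{R_{0\alpha^{(r)}i,z}}\eta_r^{0\alpha^{(r)}}\,e^{\langle w,\zeta\rangle},
\]
where $\epsilon(i,\alpha^{(r)})\in\{\pm1\}$ is the insertion sign.

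Now I would sum over $\alpha^{(r)}$ and reorganize the double sum by fixing $\alpha^{(r+1)}\in I^{(r+1)}$: each cell $R_{0\alpha^{(r+1)},z}$ appears exactly $r+1$ times, once for each $k\in\{1,\dots,r+1\}$ corresponding to removing $\alpha_k$ from $\alpha^{(r+1)}$ to obtain $\alpha^{(r+1)}_{\check k}$, and the insertion sign $\epsilon(\alpha_k,\alpha^{(r+1)}_{\check k})$ is exactly $(-1)^k$ (this is the standard combinatorial identity between cell-insertion signs and the \v Cech coboundary). Thus the grouped sum equals
\[
\sum_{\alpha^{(r+1)}}\int_{R_{0\alpha^{(r+1)},z}}\Bigl(\sum_{k=1}^{r+1}(-1)^k\eta_r^{0\alpha^{(r+1)}_{\check k}}\Bigr)e^{\langle w,\zeta\rangle}
\;=\;\sum_{\alpha^{(r+1)}}\int_{R_{0\alpha^{(r+1)},z}}(\delta\eta_r)^{0\alpha^{(r+1)}}\,e^{\langle w,\zeta\rangle},
\]
where I used that the term $\eta_r^{\alpha_1\cdots\alpha_{r+1}}$ with no $0$ vanishes in the relative complex.

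Finally, from $\eta\in\ker D$ the component $(D\eta)^{(r+1)}=\delta\eta_r+(-1)^{r+1}\bar\partial\eta_{r+1}=0$ gives $\delta\eta_r=(-1)^r\bar\partial\eta_{r+1}$. Substituting yields
\[
\sum_{\alpha^{(r)}}\int_{R_{0\alpha^{(r)},z}}(\bar\partial\eta_r)^{0\alpha^{(r)}}e^{\langle w,\zeta\rangle}
\;\approx\;(-1)^{r}\sum_{\alpha^{(r+1)}}\int_{R_{0\alpha^{(r+1)},z}}(\bar\partial\eta_{r+1})^{0\alpha^{(r+1)}}\,e^{\langle w,\zeta\rangle},
\]
and a final parity check (coming from the Stokes sign $(-1)^{?}$ I have suppressed in the outline) converts $(-1)^r$ into $-1$ to give the stated identity. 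The main obstacle is precisely this sign bookkeeping: reconciling (i) the orientation convention for the honeycomb cells $R_{0\alpha^{(r)}}$ under permutation, (ii) the sign appearing in Stokes when the boundary is expressed as an adjacent cell, and (iii) the signs in the \v Cech coboundary and in the definition of $D$. The analytic content — that the boundary remainder is in $\mathfrak N^\infty$ — is essentially already contained in Remark \ref{rmkdiff} and in the estimates carried out in the proof of Proposition \ref{welldef}, so the genuine work is the combinatorial/orientation calculation.
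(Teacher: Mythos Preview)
Your approach is exactly the paper's: Stokes, discard the $\partial_1$-remainder via the estimate in Remark~\ref{rmkdiff}, regroup the boundary sum by $\alpha^{(r+1)}$, and invoke $\delta\eta_r=(-1)^r\bar\partial\eta_{r+1}$. The regrouping step is precisely the content of the paper's Lemma~\ref{combi}.

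The only genuine gap is the sign computation you flag at the end. There is no suppressed Stokes sign that converts your $(-1)^r$ into $-1$; the discrepancy is in your claimed insertion sign. In the paper's conventions the boundary decomposition reads simply $\partial R_{0\alpha^{(r)}}\approx\sum_{j=1}^n R_{0\alpha^{(r)}j}$ with \emph{no} extra sign, because the sign of a transposition is already built into the orientation rule $R_{\alpha}=\operatorname{sgn}(\rho)\,R_{\rho(\alpha)}$. When you then rewrite $R_{0\alpha^{(r)}j}$ as $R_{0\beta^{(r+1)}}$ with $\beta^{(r+1)}$ the sorted tuple and $j$ in position $k$, the permutation needed to sort $0\alpha^{(r)}j$ has sign $(-1)^{r+1-k}$, not $(-1)^k$. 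Hence the regrouped integrand is $\sum_{k}(-1)^{r+1-k}\eta_r^{0\beta^{(r+1)}_{\check k}}=(-1)^{r+1}(\delta\eta_r)^{0\beta^{(r+1)}}$, using (as you correctly note) that $\eta_r^{\beta^{(r+1)}}=0$ in the relative complex. Combined with $\delta\eta_r=(-1)^r\bar\partial\eta_{r+1}$ this yields $(-1)^{r+1}\cdot(-1)^r=-1$ on the nose, with no further parity adjustment needed.
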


\begin{proof}
Since it follows from the assumption that $\delta \eta_r = (-1)^r \bar{\partial}\eta_{r+1}$ we have
\begin{align*}
&\sum_{\alpha^{(r)}\in I^{(r)}} \int _{R_{0\alpha^{(r)},z}} (\bar{\partial} \eta_r)^{0\alpha^{(r)}} \cdot e^{\langle w,\zeta\rangle} \\
= &\sum_{\alpha^{(r)}\in I^{(r)}} \int _{\partial R_{0\alpha^{(r)},z}} \eta_r^{0\alpha^{(r)}} \cdot e^{\langle w,\zeta\rangle} \\
= &\sum_{1\notin \alpha^{(r)}\in I^{(r)}} \int _{\partial R_{0\alpha^{(r)},z}} \eta_r^{0\alpha^{(r)}} \cdot e^{\langle w,\zeta\rangle} + \sum_{1\in \alpha^{(r)}\in I^{(r)}} \int _{\partial R_{0\alpha^{(r)},z}} \eta_r^{0\alpha^{(r)}} \cdot e^{\langle w,\zeta\rangle} \\
= &\sum_{1\notin \alpha^{(r)}\in I^{(r)}} \left( \int_{\partial_1 R_{0\alpha^{(r)},z} \setminus  R_{0\alpha^{(r)}1,z}} \eta_r^{0\alpha^{(r)}} \cdot e^{\langle w,\zeta\rangle} + \sum^n_{j=1} \int _{R_{0\alpha^{(r)}j,z}} \eta_r^{0\alpha^{(r)}} \cdot e^{\langle w,\zeta\rangle} \right) \\
&\hspace{40pt}  + \sum_{1\in \alpha^{(r)}\in I^{(r)}} \left( \int_{\partial_1 R_{0\alpha^{(r)},z} \setminus  R_{0\alpha^{(r)}1,z}} \eta_r^{0\alpha^{(r)}} \cdot e^{\langle w,\zeta\rangle} + \sum^{n}_{j=1}\int _{R_{0\alpha^{(r)}j,z}} \eta_r^{0\alpha^{(r)}} \cdot e^{\langle w,\zeta\rangle}\right).
\end{align*}
By the argument in Remark \ref{rmkdiff}, there exist positive constants $k_1$ and $C_1$ such that
\[
\left| \sum_{1\notin \alpha^{(r)}\in I^{(r)}}  \int_{\partial_1 R_{0\alpha^{(r)},z} \setminus  R_{0\alpha^{(r)}1,z}} \eta_r^{0\alpha^{(r)}} \cdot e^{\langle w,\zeta\rangle}\right| \leq C_1 e^{-k_1|\zeta|}.
\]
Similarly there exist positive constants $k_2$ and $C_2$ such that
\[
\left|\sum_{1\in \alpha^{(r)}\in I^{(r)}} \int_{\partial_1 R_{0\alpha^{(r)},z} \setminus  R_{0\alpha^{(r)}1,z}} \eta_r^{0\alpha^{(r)}} \cdot e^{\langle w,\zeta\rangle}\right| \leq C_2 e^{-k_2|\zeta|}.
\]
These imply the integrations on reminder domains are in null class.
Hence we have
\begin{align*}
&\sum_{\alpha^{(r)}\in I^{(r)}} \int _{R_{0\alpha^{(r)},z}} (\bar{\partial} \eta_r)^{0\alpha^{(r)}} \cdot e^{\langle w,\zeta\rangle} \\
\approx &\sum_{1\notin \alpha^{(r)}\in I^{(r)}}  \sum^n_{j=1} \int _{R_{0\alpha^{(r)}j,z}} \eta_r^{0\alpha^{(r)}} \cdot e^{\langle w,\zeta\rangle} + \sum_{1\in \alpha^{(r)}\in I^{(r)}} \sum^{n}_{j=1}\int _{R_{0\alpha^{(r)}j,z}} \eta_r^{0\alpha^{(r)}} \cdot e^{\langle w,\zeta\rangle} \\
= & \sum_{\alpha^{(r)}\in I^{(r)}} \sum^n_{j=1} \int _{R_{0\alpha^{(r)}j,z}} \eta_r^{0\alpha^{(r)}} \cdot e^{\langle w,\zeta\rangle}.
\end{align*}

Here we prove the following key lemma.
\begin{lemm}\label{combi}
Let $\eta \in \mathscr{F}^{n-1}(\mathcal{W},\mathcal{W}')$.
We have
\[
\sum_{\alpha^{(r)}\in I^{(r)}} \sum^n_{i=1} \int _{R_{0\alpha^{(r)}i,z}} \eta_r^{0\alpha^{(r)}} \cdot e^{\langle w,\zeta\rangle}
= \sum_{\beta^{(r+1)}\in I^{(r+1)}} \sum^{r+1}_{j=1} \int _{R_{0\beta^{(r+1)},z}} (-1)^{r+1-j} \cdot \eta_r^{0\beta^{(r+1)}_{\check{j}}} \cdot e^{\langle w,\zeta\rangle}.
\]
\end{lemm}

\begin{proof}
Let $\{(\alpha^{(r)},i)\mid \alpha^{(r)}\in I^{(r)}, i \in I\}$ and $\{(\beta^{(r+1)},j) \mid \beta^{(r+1)}\in I^{(r+1)}, 1\leq j\leq r+1 \}$ be index sets.
We denote by $\alpha^{(r)}[i]$ the $i$-th component of $\alpha^{(r)}$.
We define the map $F$ by
\[
\begin{array}{ccc}
F:\{(\alpha^{(r)},i)\mid \alpha^{(r)}\in I^{(r)}, i \in I\}   & \longrightarrow & \{(\beta^{(r+1)},j) \mid \beta^{(r+1)}\in I^{(r+1)}, j\in I \} \\[2pt]
\rotatebox{90}{$\in$} &                 & \rotatebox{90}{$\in$} \\[-1pt]
(\alpha^{(r)},i) & \longmapsto & (\gamma^{(r+1)},k),
\end{array}
\]
where $\gamma^{(r+1)}$ is $\alpha^{(r)} i$ sorted in the increasing order and $k = \#\{ \ell \mid \alpha^{(r)}[\ell] < i \}$.
Remark that if $i\in\alpha^{(r)}$ we have $\alpha^{(r)}i = 0$.
We also define $G$ by
\[
\begin{array}{ccc}
G: \{(\beta^{(r+1)},j) \mid \beta^{(r+1)}\in I^{(r+1)}, j\in I \} & \longrightarrow & \{(\alpha^{(r)},i)\mid \alpha^{(r)}\in I^{(r)}, i \in I\} \\[2pt]
\rotatebox{90}{$\in$} &                 & \rotatebox{90}{$\in$} \\[-1pt]
(\beta^{(r+1)},j) & \longmapsto & (\beta^{(r+1)}_{\check{j}},\beta^{(r+1)}[j]).
\end{array}
\]
Set
\begin{alignat*}{1}
p((\alpha^{(r)},i)) &= \int _{R_{0\alpha^{(r)}i,z}} \eta_r^{0\alpha^{(r)}} \cdot e^{\langle w,\zeta\rangle}, \\
q((\beta^{(r+1)},j)) &= \int _{R_{0\beta^{(r+1)},z}} (-1)^{r+1-j} \cdot \eta_r^{0\beta^{(r+1)}_{\check{j}}} \cdot e^{\langle w,\zeta\rangle}.
\end{alignat*}
By the definitions of $F$ and $G$ we have the following properties.
\begin{enumerate}
\item $F\circ G= id$ and $G\circ F = id$. 
\item $q(F((\alpha{(r)},i))) =p((\alpha{(r)},i))$.
\end{enumerate}
The second property can be shown as follows.
\begin{alignat*}{1}
q(F((\alpha{(r)},i))) &=  q((\gamma^{(r+1)},k))\\
&= \int _{R_{0\gamma^{(r+1)},z}} (-1)^{r+1-k} \cdot \eta_r^{0\gamma^{(r+1)}_{\check{k}}} \cdot e^{\langle w,\zeta\rangle} \\
 &= (-1)^{r+1-k} \int _{R_{0\alpha^{(r)}i,z}} (-1)^{r+1-k} \eta_r^{0\alpha^{(r)}} \cdot e^{\langle w,\zeta\rangle} = p((\alpha{(r)},i)).
\end{alignat*}
Hence we have
\begin{alignat*}{1}
&\sum_{\alpha^{(r)}\in I^{(r)}} \sum^n_{i=1} \int _{R_{0\alpha^{(r)}j,z}} \eta_r^{0\alpha^{(r)}} \cdot e^{\langle w,\zeta\rangle} \\
=& \sum_{\alpha^{(r)}\in I^{(r)}} \sum^n_{i=1} p((\alpha^{(r)},i))\\
=& \sum_{\alpha^{(r)}\in I^{(r)}} \sum^n_{i=1} q(F((\alpha^{(r)},i))) \\
=& \sum_{\beta^{(r+1)}\in I^{(r+1)}} \sum^{r+1}_{j=1} q((\beta^{(r+1)},j))\\
=& \sum_{\beta^{(r+1)}\in I^{(r+1)}} \sum^{r+1}_{j=1} \int _{R_{0\beta^{(r+1)},z}} (-1)^{r+1-j} \cdot \eta_r^{0\beta^{(r+1)}_{\check{j}}} \cdot e^{\langle w,\zeta\rangle}.
\end{alignat*}
\end{proof}

Now we go back to the proof of Theorem \ref{NisoA}.
By Lemma \ref{combi} we obtain
\begin{align*}
&\sum_{\alpha^{(r)}\in I^{(r)}} \sum^n_{i=1} \int _{R_{0\alpha^{(r)}i,z}} \eta_r^{0\alpha^{(r)}} \cdot e^{\langle w,\zeta\rangle} \\
= &  \sum_{\alpha^{(r+1)}\in I^{(r+1)}} \sum^{r+1}_{j=1} \int _{R_{0\alpha^{(r+1)},z}} (-1)^{r+1-j} \cdot \eta_r^{0\alpha^{(r+1)}_{\check{j}}} \cdot e^{\langle w,\zeta\rangle} \\
= & (-1)^{r+1}\sum_{\alpha^{(r+1)}\in I^{(r+1)}} \int _{R_{0\alpha^{(r+1)},z}} (\delta \eta_r)^{0\alpha^{(r+1)}} \cdot e^{\langle w,\zeta\rangle} \\
= & (-1)^{r+1}\sum_{\alpha^{(r+1)}\in I^{(r+1)}} \int _{R_{0\alpha^{(r+1)},z}} ((-1)^r\bar{\partial} \eta_{r+1})^{0\alpha^{(r+1)}} \cdot e^{\langle w,\zeta\rangle} \\
= & - \sum_{\alpha^{(r+1)}\in I^{(r+1)}} \int _{R_{0\alpha^{(r+1)},z}} (\bar{\partial} \eta_{r+1})^{0\alpha^{(r+1)}} \cdot e^{\langle w,\zeta\rangle}
\end{align*}
and this completes the proof.
\end{proof}

Mentioning that $\eta^i_0 = 0$ for $i\neq 0$,
we can calculate the image $\varsigma(\omega)$ as follows.
\begin{align*}
\varsigma(\omega) &= \int_{R_{0,z}} \varphi(\omega_1)^0 \cdot e^{\langle w,\zeta\rangle} + \sum^n_{i=1} \int_{R_{0i,z}} \varphi(\omega_{01})^{0i} \cdot e^{\langle w,\zeta\rangle} \\
&= \int_{R_{0,z}} \bar{\partial} \eta^0_0 \cdot e^{\langle w,\zeta\rangle} + \sum^n_{i=1} \int_{R_{0i,z}} ((\delta\eta_0)^{0i} - (\bar{\partial}\eta_1)^{0i}) \cdot e^{\langle w,\zeta\rangle} \\
&\approx \sum^n_{i=1} \int_{R_{0i,z}} \eta^0_0 \cdot e^{\langle w,\zeta\rangle} + \sum^n_{i=1} \int_{R_{0i,z}} (\delta\eta_0)^{0i} \cdot e^{\langle w,\zeta\rangle} - \sum^n_{i=1} \int_{R_{0i,z}} (\bar{\partial}\eta_1)^{0i} \cdot e^{\langle w,\zeta\rangle} \\
&= \sum^n_{i=1} \int_{R_{0i,z}} \eta^0_0 \cdot e^{\langle w,\zeta\rangle} + \sum^n_{i=1} \int_{R_{0i,z}} ( - \eta^0_0) \cdot e^{\langle w,\zeta\rangle} - \sum_{\alpha^{(1)}\in I^{(1)}} \int_{R_{0\alpha^{(1)},z}} (\bar{\partial}\eta_1)^{0\alpha^{(1)}} \cdot e^{\langle w,\zeta\rangle}\\
& = - \sum_{\alpha^{(1)}\in I^{(1)}} \int_{R_{0\alpha^{(1)},z}} (\bar{\partial}\eta_1)^{0\alpha^{(1)}} \cdot e^{\langle w,\zeta\rangle}.
\end{align*}

By applying Theorem \ref{NisoA} to the above inductively we obtain
\begin{align*}
-\sum_{\alpha^{(1)}\in I^{(1)}} \int _{R_{0\alpha^{(1)},z}} (\bar{\partial} \eta_1)^{0\alpha^{(1)}} \cdot e^{\langle w,\zeta\rangle} &\approx \sum_{\alpha^{(2)}\in I^{(2)}} \int _{R_{0\alpha^{(2)},z}} (\bar{\partial} \eta_{2})^{0\alpha^{(r+1)}} \cdot e^{\langle w,\zeta\rangle}\\
& \quad \vdots \\
& \approx  (-1)^{n-1}\sum_{\alpha^{(n-1)}\in I^{(n-1)}} \int _{R_{0\alpha^{(n-1)},z}} (\bar{\partial} \eta_{n-1})^{0\alpha^{(n-1)}} \cdot e^{\langle w,\zeta\rangle}.
\end{align*}
By Stokes formula we have
\begin{align*}
& (-1)^{n-1}\sum_{\alpha^{(n-1)}\in I^{(n-1)}} \int _{R_{0\alpha^{(n-1)},z}} (\bar{\partial} \eta_{n-1})^{0\alpha^{(n-1)}} \cdot e^{\langle w,\zeta\rangle} \\
\approx&  (-1)^{n-1} \sum_{\alpha^{(n-1)}\in I^{(n-1)}} \sum^n_{j=1} \int_{R_{0\alpha^{(n-1)}j,z}} \eta^{0\alpha^{(n-1)}}_{n-1} \cdot e^{\langle w,\zeta\rangle} \\
= &  (-1)^{n-1} \sum_{\alpha^{(n)}\in I^{(n)}} \sum^n_{j=1} \int_{R_{0\alpha^{(n)},z}} (-1)^{n-j} \cdot \eta^{0\alpha^{(n-1)}_{\check{j}}}_{n-1} \cdot e^{\langle w,\zeta\rangle} \\
= & - \sum_{\alpha^{(n)}\in I^{(n)}} \int_{R_{0\alpha^{(n)},z}} (\delta \eta_{n-1})^{0\alpha^{(n)}} \cdot e^{\langle w,\zeta\rangle} \\
= &  - \sum_{\alpha^{(n)}\in I^{(n)}} \int_{R_{0\alpha^{(n)},z}} (-\sigma)^{0\alpha^{(n)}} \cdot e^{\langle w,\zeta\rangle} \\
= &  \int_{R_{01\dots n,z}} \sigma \cdot e^{\langle w,\zeta\rangle} .
\end{align*}
To conclude the proof of the commutativity of \eqref{com2map} we show
\[
\int_{R_{01\dots n,z}} \sigma \cdot e^{\langle w,\zeta\rangle} \approx \int_{\gamma_1\times \gamma_2 \times \cdots \times \gamma_n} \sigma \cdot e^{\langle w,\zeta\rangle}.
\]
By the definition of $B_i$ there exists a positive constant $k$ such that the subset $R_{01\dots n,z}\setminus \gamma_1\times \dots \times \gamma_n$ is contained in $\displaystyle \left\{ w \in\mathbb{C}^{n} \,\middle|\, \frac{\langle \lambda, w \rangle}{|\lambda||w|} < -k \right\}$.
Hence for a large enough $C>0$ we have
\begin{align*}
\left| \int_{R_{01\dots n,z}} \sigma \cdot e^{\langle w,\zeta\rangle} - \int_{\gamma_1\times \gamma_2 \times \cdots \times \gamma_n} \sigma \cdot e^{\langle w,\zeta\rangle} \right| \leq & \int_{R_{01\dots n,z} \setminus \gamma_1\times \gamma_2 \times \cdots \times \gamma_n } \left|\sigma \cdot e^{\langle w,\zeta\rangle}\right| \\
\leq & \int_{R_{01\dots n,z} \setminus \gamma_1\times \gamma_2 \times \cdots \times \gamma_n } |\sigma| \cdot e^{-k|\zeta|}\\
\leq & Ce^{-k|\zeta|},
\end{align*}
and the proof has been completed.

\end{document}